\numberwithin{equation}{section}
\numberwithin{figure}{section}
\theoremstyle{plain}
\newtheorem{thm}{\protect\theoremname}
\theoremstyle{definition}
\newtheorem{defn}[thm]{\protect\definitionname}
\theoremstyle{definition}
\newtheorem{example}[thm]{\protect\examplename}
\theoremstyle{plain}
\newtheorem{prop}[thm]{\protect\propositionname}
\theoremstyle{remark}
\newtheorem*{rem*}{\protect\remarkname}
\theoremstyle{plain}
\newtheorem{lem}[thm]{\protect\lemmaname}
\theoremstyle{plain}
\newtheorem{cor}[thm]{\protect\corollaryname}
\theoremstyle{remark}
\newtheorem{rem}[thm]{\protect\remarkname}
\providecommand{\corollaryname}{Corollary}
\providecommand{\definitionname}{Definition}
\providecommand{\examplename}{Example}
\providecommand{\lemmaname}{Lemma}
\providecommand{\propositionname}{Proposition}
\providecommand{\remarkname}{Remark}
\providecommand{\theoremname}{Theorem}
\begin{document}

\title{On the structure of certain $\Gamma$-difference modules}

\author{Ehud de Shalit and José Gutiérrez}

\address{Ehud de Shalit, Einstein Institute of Mathematics, Hebrew University
of Jerusalem, Israel.}

\email{ehud.deshalit@mail.huji.ac.il}

\address{José-Ibrahim Villanueva-Gutiérrez, Einstein Institute of Mathematics,
Hebrew University of Jerusalem, Israel.}

\email{jose.gutierrez@mail.huji.ac.il}
\begin{abstract}
This is a largely expository paper, providing a self-contained account
on the results of \cite{Sch-Si1,Sch-Si2}, in the cases denoted there
2Q and 2M. These papers of Schäfke and Singer supplied new proofs
to the main theorems of \cite{Bez-Bou,Ad-Be}, on the rationality
of power series satisfying a pair of independent $q$-difference,
or Mahler, equations. 

We emphasize the language of $\Gamma$-difference modules, instead
of difference equations or systems. Although in the two cases mentioned
above this is only a semantic change, we also treat a new case, which
may be labeled 1M1Q. Here the group $\Gamma$ is generalized dihedral
rather than abelian, and the language of equations is inadequate.

In the last section we explain how to generalize the main theorems
in case 2Q to finite characteristic.
\end{abstract}

\keywords{Difference equations, Mahler equations.}

\subjclass[2000]{39A13, 12H10}
\maketitle

\section*{Introduction}

Adamczewski and Bell proved in 2017 the following theorem, conjectured
some 30 years earlier by Loxton and van der Poorten \cite{vdPo}.
\begin{thm}
\cite{Ad-Be} \label{thm:L-vdP} Let $p$ and $q$ be multiplicatively
independent natural numbers. Consider the endomorphisms
\[
\sigma f(x)=f(x^{p}),\,\,\,\tau f(x)=f(x^{q})
\]
of the field of rational functions $K=\mathbb{C}(x)$ and of its completion
at 0, the field of Laurent series $k=\mathbb{C}((x))$. If $f\in k$
satisfies the two \emph{Mahler equations
\[
\sum_{i=0}^{n}a_{i}\sigma^{n-i}(f)=0,\,\,\,\sum_{i=0}^{m}b_{i}\tau^{m-i}(f)=0,
\]
with $a_{i},b_{i}\in K$, then $f\in K.$}
\end{thm}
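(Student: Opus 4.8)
The plan is to repackage the hypothesis as a statement about a single finitely generated $\Gamma$-difference module over $K$, where $\Gamma=\langle\sigma,\tau\rangle$ is the commutative monoid they generate, and then to prove that such a module, possessing a ``solution'' in $k$, must be rational; for the cyclic generator $f$ this says exactly $f\in K$. The multiplicative independence of $p$ and $q$ is consumed only at the very end, through a rigidity principle for objects invariant under the two commuting contractions $x\mapsto x^p$ and $x\mapsto x^q$.

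First I would build the module. Let $V_\sigma,V_\tau\subseteq k$ be the $K$-spans of the $\sigma$- and $\tau$-orbits of $f$; the two Mahler equations say precisely that $\dim_K V_\sigma\le n$ and $\dim_K V_\tau\le m$, so for $n,m$ minimal $\sigma^n f=\sum_{i<n}c_i\sigma^i f$ with $c_i\in K$ (and similarly for $\tau^m f$). Since $\sigma$ and $\tau$ commute and preserve $K$, the $K$-span $M$ of $\{\sigma^i\tau^j f:0\le i<n,\ 0\le j<m\}$ is a finite-dimensional $K$-subspace of $k$ stable under both operators: for instance $\sigma(\sigma^{n-1}\tau^j f)=\sigma^n\tau^j f=\sum_{i<n}\tau^j(c_i)\,\sigma^i\tau^j f$ lies in $M$ because $\tau^j(c_i)\in K$, and symmetrically for $\tau$. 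Hence $(M,\Phi_\sigma,\Phi_\tau)$ is a $\Gamma$-difference module over $K$, cyclic with generator $f$, and the theorem is equivalent to: a cyclic $\Gamma$-difference module over $K=\mathbb C(x)$ whose generator lies in $k=\mathbb C((x))$ already has its generator in $K$.

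Next I would analyse $M$ locally and reduce to rank one. At $x=0$ both operators strictly raise the $x$-adic valuation ($v\circ\sigma=p\,v$, $v\circ\tau=q\,v$), hence are contracting, and likewise at $\infty$. For a single Mahler operator, difference modules over $\mathbb C((x))$ are easy to classify: after a finite Puiseux ramification, a shearing twist and adjoining the relevant logarithm, $M$ becomes a successive extension of rank-one modules, with an explicit local solution matrix near $0$; the relation $\Phi_\sigma\Phi_\tau=\Phi_\tau\Phi_\sigma$ lets one take this filtration $\tau$-stable, and similarly at $\infty$. Inducting on $\operatorname{rank}M$ through these extensions reduces the problem to an extension of rank-one $\Gamma$-modules over $K$: if $f\notin K$ one obtains a nonzero extension class over $K$ — in the model case a class $\bar a\in K/(\Phi_\sigma-1)K$ — that is nonetheless realised by $f\in k$ and, for the $\tau$-structure to exist on $M$, must be fixed (up to the twist) by the action induced by $\Phi_\tau$ on $K/(\Phi_\sigma-1)K$.

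The crux, and the one genuinely hard step, is to show that such a class must vanish, and this is precisely where multiplicative independence is used. The coinvariant space $K/(\Phi_\sigma-1)K$ is organised by the dynamics of $x\mapsto x^p$: its polynomial part has a $\mathbb C$-basis indexed by the exponents $m\ge1$ with $p\nmid m$, together with the class of $1$, on which $\Phi_\tau$ (essentially $x\mapsto x^q$) acts by a permutation; multiplicative independence of $p$ and $q$ forces this permutation to have no finite orbit other than that of the class of $1$ — were $q$ a power of $p$ the action would be trivial and the conclusion false — and the ``pole part'' is controlled by the same dynamics, a pole at a point with infinite $\langle p,q\rangle$-orbit being incompatible with a $\Phi_\tau$-fixed class, while the finitely many torsion points with finite orbits are dispatched by a direct computation, this being where a Furstenberg-type rigidity for $\langle p,q\rangle$-invariant sets enters. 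Thus the only $\Phi_\tau$-fixed class is that of a constant $c$, and that one is not realised by a power series, since a solution of $\Phi_\sigma h=h+c$ in $k$ forces $c=0$ on comparing constant terms. Hence $\bar a=0$, the extension splits, and unwinding the induction gives $f\in K$, which proves Theorem~\ref{thm:L-vdP}. Making this interaction of the $p$- and $q$-dynamics on the coinvariants precise — above all the torsion and unit-circle points — is the principal obstacle; the rest is formal once the $\Gamma$-difference-module language is in place. (One could equally run the endgame analytically: one Mahler equation propagates the germ of $f$ outward past every circle $|x|=r$, while the other, by the same rigidity, forces the exceptional set of the continuation to be finite, so $f$ is meromorphic on $\mathbb P^1$ with moderate growth, hence rational.)
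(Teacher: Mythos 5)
Your opening move --- packaging the two equations as finite-dimensionality of the $K\left\langle \Gamma\right\rangle $-module $M$ generated by $f$, and reducing the theorem to a rationality statement about that module --- is exactly the paper's Proposition \ref{prop:Modules_and_equations} (modulo a point you elide: $\sigma,\tau$ are only endomorphisms of $K$, so one must pass to $\widetilde{K}=\bigcup\mathbb{C}(x^{1/s})$, where they become automorphisms, and recover $K$ at the end via $k\cap\widetilde{K}=K$). But the core of your argument has two genuine gaps. First, the reduction to extensions of rank-one modules over $K$: the filtration with rank-one graded pieces that you invoke comes from the formal structure theory and lives over $\widetilde{k}=\bigcup\mathbb{C}((x^{1/s}))$, not over $K$ or $\widetilde{K}$; producing a \emph{global} rank-one $\Gamma$-submodule is essentially equivalent to the theorem being proved, so ``inducting on rank through these extensions'' begs the question. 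Second, and more seriously, your acknowledged crux --- vanishing of the $\Phi_{\tau}$-compatible class in $K/(\Phi_{\sigma}-1)K$ --- is precisely the hard content, and the sketch offered (a permutation action on monomial classes plus ``Furstenberg-type rigidity'' on pole sets) is not a proof and does not obviously close: the preperiodic points of $x\mapsto x^{p}$ are \emph{all} roots of unity, so the torsion poles are not dispatched by finitely many ad hoc computations in any uniform way, and nothing in your sketch pins down how polar parts transform in the quotient. Your parenthetical analytic fallback is also false as stated: a single Mahler functional equation continues the local solution only to $|x|<1$; the unit circle is a natural boundary for that continuation, and crossing it is the whole difficulty.

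The paper's actual mechanism at this point is quite different. It trivializes $M$ formally at the three fixed points $0,1,\infty$ of the Mahler maps (Section \ref{sec:Formal}; at $x=1$ the substitution $z=\log x$ converts the problem into a $(p,q)$-difference module), proves the local gauge matrices $C_{0},C_{1},C_{\infty}$ converge, continues $C_{0}$ and $C_{\infty}$ meromorphically up to (but not across) $|x|=1$, and then crosses the natural boundary by comparing $C_{0}$ with $C_{1}$ near $x=1$: writing $D(w)=\widetilde{C}_{01}(e^{w})$ and using \emph{both} functional equations, one obtains a Fourier expansion whose frequencies $e^{2\pi in\log q/\log p}$ are pairwise distinct exactly because $p$ and $q$ are multiplicatively independent, forcing all but finitely many coefficients to vanish and yielding entire continuation; a final Laurent-expansion argument shows $C_{0}\in GL_{r}(K)$, giving the $\mathbb{C}$-structure of Theorem \ref{thm:Main Theorem}, from which $f\in K$ follows as in Proposition \ref{prop:Modules_and_equations}. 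That is where multiplicative independence is consumed --- not in a combinatorial analysis of coinvariants. To salvage your route you would have to actually carry out the cohomological vanishing over $K$ (including the global filtration it presupposes), or else import the paper's local-to-global analytic continuation.
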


For an account on Mahler's equations and their role in transcendence
theory, see the survey paper \cite{Ad}. A similar theorem has been
proved by Bézivin and Boutabaa in 1992.
\begin{thm}
\cite{Bez-Bou} \label{thm:Bezivin}Let $p$ and $q$ be multiplicatively
independent complex numbers. Consider the automorphisms
\[
\sigma f(x)=f(px),\,\,\,\tau f(x)=f(qx)
\]
of the fields $K=\mathbb{C}(x)$ and $k=\mathbb{C}((x)).$ If $f\in k$
satisfies the two $q$\emph{-difference} equations
\[
\sum_{i=0}^{n}a_{i}\sigma^{n-i}(f)=0,\,\,\,\sum_{i=0}^{m}b_{i}\tau^{m-i}(f)=0
\]
\emph{with $a_{i},b_{i}\in K$, then $f\in K.$}
\end{thm}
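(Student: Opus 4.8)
The plan is to argue in the language of $\Gamma$-difference modules, which also locates where the multiplicative independence of $p$ and $q$ is used. Set $W=\sum_{i=0}^{n-1}K\,\sigma^{i}(f)\subseteq k$. The first equation gives $\sigma W\subseteq W$; applying $\tau^{j}$ to that equation shows each $\tau^{j}W$ is again $\sigma$-stable, of dimension $\le n$; and the second equation gives $\tau^{m}(f)\in\sum_{j<m}K\tau^{j}(f)$, whence $M:=\sum_{j=0}^{m-1}\tau^{j}W$ is a finite-dimensional $K$-subspace of $k$ stable under both $\sigma$ and $\tau$. Since $p,q$ are multiplicatively independent, $\Gamma:=\langle\sigma,\tau\rangle$ is free abelian of rank two, so $M$ is a $\Gamma$-difference submodule of $k$ containing $f$. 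It therefore suffices to prove that \emph{every $\Gamma$-difference submodule of $k$ is contained in $K$}, for then $f\in M\subseteq K$.

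Consider first a rank-one $\Gamma$-submodule $Km$ of $k$, described by its multipliers $a=\sigma(m)/m$ and $b=\tau(m)/m$ in $K^{\times}$, subject to the cocycle identity $\sigma(b)\,a=\tau(a)\,b$ forced by $\sigma\tau=\tau\sigma$. From $\operatorname{ord}_{0}(\sigma m)=\operatorname{ord}_{0}(m)$ we get $\operatorname{ord}_{0}(a)=\operatorname{ord}_{0}(b)=0$, and comparing leading terms at $0$ gives $a(0)=p^{v}$ and $b(0)=q^{v}$, where $v=\operatorname{ord}_{0}(m)$. Now read the cocycle identity on divisors over $\mathbb{P}^{1}$: the automorphisms $x\mapsto px$ and $x\mapsto qx$ fix $0$ and $\infty$, and multiplicative independence makes $\langle p,q\rangle$ act freely on $\mathbb{G}_{m}$, so every orbit there is a free $\mathbb{Z}^{2}$; on such an orbit the cocycle identity says precisely that the pair $(\operatorname{div}a,\operatorname{div}b)$ is curl-free, hence is the pair of partial differences of a finitely supported divisor $\Delta$. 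Choosing $h\in K^{\times}$ with divisor $\Delta$ (corrected at $0$ if necessary) and replacing $m$ by $m/(x^{v}h)$, we reduce to $\sigma(m)=c\,m$, $\tau(m)=c'\,m$ with $c,c'\in\mathbb{C}^{\times}$; comparing coefficients of the Laurent expansion of $m$, and using that $p$ and $q$ are not roots of unity, then forces $m$ to be a monomial. Hence every rank-one $\Gamma$-submodule of $k$ is $Kx^{N}$ for some $N\in\mathbb{Z}$, and in particular lies in $K$.

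For the general case I would induct on $\operatorname{rk}_{K}M$ for a minimal would-be counterexample. Since a $\Gamma$-submodule of $k$ contained in $K$ is $0$ or $K$ (as $K$ is one-dimensional over itself), and every proper nonzero $\Gamma$-submodule of $M$ has strictly smaller rank, one reduces to the case where $M$ is $\Gamma$-simple or fits in an extension $0\to K\to M\to\overline{M}\to0$ with $\overline{M}$ $\Gamma$-simple. Passing to $\widehat{M}:=k\otimes_{K}M$, a $\sigma$-difference module over $\mathbb{C}((x))$ carrying a commuting $\tau$-semilinearity, the vanishing of the valuations at $0$ of the multipliers of rank-one subquotients (as above) makes $\widehat{M}$ regular singular in both directions; by the structure theory of $q$-difference modules over $\mathbb{C}((x))$ (for $x\mapsto px$ with $p$ not a root of unity) it is then an iterated extension of rank-one modules with exponents $c_{i}\in\mathbb{C}^{\times}$, and the commuting $\tau$-action equips each subquotient with a second exponent $c_{i}'\in\mathbb{C}^{\times}$. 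Since $\langle p,q\rangle$ is free of rank two, such a rank-one $\Gamma$-module over $\mathbb{C}((x))$ is trivial precisely when $(c_{i},c_{i}')\in\langle(p,q)\rangle$; this will be forced in our situation, so that $M$ acquires a $\Gamma$-stable flag whose rank-one graded pieces are, by the previous paragraph, of the form $Kx^{N_{i}}$.

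The remaining, and genuinely hard, point is to show that this flag is already defined over $K$: equivalently, that an inhomogeneous $\Gamma$-difference system with coefficients in $\mathbb{C}(x)$ having a solution in $\mathbb{C}((x))$ already has one in $\mathbb{C}(x)$; in cohomological terms, that $H^{1}(\Gamma,\mathcal{E})$ is unchanged on replacing $\mathbb{C}(x)$ by $\mathbb{C}((x))$, for $\mathcal{E}$ one of the rank-one pieces above. This is the only place where the two equations are used jointly, and where multiplicative independence really bites: one first solves the system in the $\sigma$-direction over $\mathbb{C}((x))$, where the solution is unique up to the constant field $\mathbb{C}$, then uses the $\tau$-equation to see that this solution has constant $\tau$-multipliers, whence it is rational by the rank-one analysis — and the freeness of $\langle p,q\rangle$ is exactly what makes the $\sigma$- and $\tau$-coboundary operators transverse enough for this rigidity argument to run. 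Feeding the conclusion back into the induction gives $M\subseteq K$, hence $f\in K$. I expect this cohomological descent to be the main obstacle.
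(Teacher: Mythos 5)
Your reduction to a module statement is sound and matches the paper's Proposition \ref{prop:Modules_and_equations}: the space $M=\sum_{j<m}\tau^{j}W$ is indeed a finite-dimensional $\Gamma$-stable $K$-subspace of $k$ containing $f$, and the rank-one analysis (vanishing of $\mathrm{ord}_{0}$ of the multipliers, the divisor/potential argument on the free $\langle p,q\rangle$-orbits in $\mathbb{G}_{m}$, then the Laurent-coefficient argument for constant multipliers) is a reasonable, essentially self-contained treatment of that case. But the proof has a genuine gap exactly where you flag it yourself: the claim that a trivialization (or flag) of $M$ over $\mathbb{C}((x))$ descends to $\mathbb{C}(x)$ --- equivalently, that an inhomogeneous $\Gamma$-system with rational coefficients solvable in $\mathbb{C}((x))$ is solvable in $\mathbb{C}(x)$ --- is not a formal rigidity statement and does not follow from ``transversality of the coboundary operators.'' Concretely, for $c\in\mathbb{C}^{\times}$ and rational $g$, the equation $y(px)-cy(x)=g(x)$ generically has a formal solution $y=\sum g_{n}x^{n}/(p^{n}-c)$ that is \emph{not} rational; the element $y$ sits in a nonsplit extension and is not a rank-one object with multipliers, so your rank-one analysis does not apply to it. Ruling out such formal-but-irrational solutions is the entire analytic content of the theorem.

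The paper closes this gap in \S\ref{subsec:(p,q)-difference modules} by a four-part argument that has no counterpart in your sketch: (i) the formal gauge matrix $C_{0}$ at $0$ actually \emph{converges} (coefficient estimates, Step III); (ii) it continues meromorphically to all of $\mathbb{C}$ via $C_{0}(px)=A(x)C_{0}(x)A_{0}^{-1}$, which requires $|p|>1$ --- an assumption removed only by transporting the whole problem through an abstract isomorphism $\mathbb{C}\simeq\mathbb{C}_{\ell}$ and redoing the argument rigid-analytically (Step VI); (iii) near $\infty$ the set of poles of $C_{0}$ would be stable under the two multiplicatively independent dilations and hence accumulate in a compact annulus, so it is empty --- this is where the two equations are used jointly; (iv) comparison with $C_{\infty}$ and a Laurent expansion at $\infty$ shows $C_{0}^{-1}C_{\infty}$ is constant, whence $C_{0}$ is rational. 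You would need to supply arguments of this kind (or some substitute) before the inductive step runs; as written, the ``cohomological descent'' you defer is not a technical obstacle but the theorem itself. A secondary unproved assertion is that the formal exponents $(c_{i},c_{i}')$ of the graded pieces lie in $\langle(p,q)\rangle$; for a general $(p,q)$-difference module they are arbitrary commuting constants, and deducing the constraint from $M\subset k$ presupposes the rationality you are trying to prove.
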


The proofs of these two theorems used a variety of techniques. In
the case of Theorem \ref{thm:L-vdP} it relied on Cobham's theorem
\cite{Co} in the theory of automata. Theorem \ref{thm:Bezivin} was
proved by $p$-adic techniques, for an auxiliary prime\footnote{Having nothing to do with the complex number denoted by $p$ in the
statement of the theorem.} $p$, and involved, in its original formulation, some unnecessary
restrictions. Recently, Schäfke and Singer \cite{Sch-Si1,Sch-Si2}
provided a uniform treatment of the two theorems, as well as of other
similar results. Besides emphasizing common features, they eliminated
the dependence, in the work of Adamczewski and Bell, on Cobham's theorem.
In fact, the latter could now be deduced from Theorem \ref{thm:L-vdP}.
In Theorem \ref{thm:Bezivin} they still required $p$ or $q$ to
be of absolute value different than 1 (at least after some automorphism
of the complex numbers), but this restriction can be removed.

\bigskip{}

The goal of this largely expository paper is to provide yet another
look at the same theorems, leading to a third, new example. We give
a self-contained treatment, based on the notion of a $\Gamma$-difference
module, which is introduced in Section \ref{sec:-=00005CGamma difference},
and we shift the focus from \emph{equations} to\emph{ modules}. This
is similar to studying linear partial differential equations via $D$-modules.
The letter $\Gamma$ signifies the group of automorphisms (of $K$
or of some extension field $\widetilde{K}$) generated by the operators
$\sigma$ and $\tau,$ which, in the two examples cited above, is
free abelian of rank 2. This approach allows us to isolate, in Section
\ref{sec:Formal}, the formal aspects of the theory. Once we globalize,
in Section \ref{sec:Rational}, our proof of Theorem \ref{thm:L-vdP}
follows the line of \cite{Sch-Si1,Sch-Si2}. In Theorem 2 we remove
the unnecessary restriction that $|p|\ne1$ or $|q|\ne1$ (see Step
VI of \S\ref{subsec:(p,q)-difference modules}).

We treat the above two theorems, corresponding to the cases 2M and
2Q in \cite{Sch-Si1}. In Section \ref{sec:-1M1Q} we give a third
example that might be denoted 1M1Q. In this case the group $\Gamma$
is no longer abelian, but rather \emph{generalized dihedral}. As a
result, the main theorem does not lend itself to a simple-minded formulation
in terms of equations as above, but its formulation (and proof) in
the language of difference modules is completely analogous to the
first two cases.

In the last section we explain how to generalize Theorem \ref{thm:Bezivin},
as well as case 2Q of our Main Theorem (Theorem \ref{thm:Main Theorem}),
if an arbitrary field of constants, possibly of finite characteristic,
is substituted for the field of complex numbers.

Finally we remark that in \cite{dS1,dS2} a similar situation, not
discussed in the present paper, is studied, where the field of rational
functions is replaced by a field of elliptic functions. New issues
arise there. One is the issue of periodicity. Another one is the existence
of non-trivial $\Gamma$-invariant vector bundles on the elliptic
curve. Nevertheless, these issues can be analyzed, and a theorem analogous
to the two theorems cited above, where the operators $\sigma$ and
$\tau$ are induced by isogenies of the elliptic curve, and the coefficients
$a_{i}$ and $b_{i}$ are elliptic functions, is proved there. We
stress that in the elliptic case, a power series $f$ satisfying two
elliptic $p$- and $q$-difference equations need not be elliptic.
Instead, it belongs to a slightly larger ring of functions, generated
over the field of elliptic functions (in the variable $z$) by $z,z^{-1}$
and the Weierstrass zeta function of $z$.

Recent work of Adamczewski, Dreyfus, Hardouin and Wibmer established
a far-reaching strengthening of the above mentioned theorems. In \cite{ADHW}
they show that if $f,g\in\mathbb{C}((x))$ do not belong to $\mathbb{C}(x)$,
$f$ satisfies a $\sigma$-difference equation and $g$ a $\tau$-difference
equation, then $f$ and $g$ are algebraically independent over $\mathbb{C}(x).$
Special cases of this result have been proved by various authors before.
Our survey raises two immediate questions: (a) Can a generalization
of this type be phrased (and proved) in the context of \emph{difference
modules}, that will apply for example in the case 1M1Q, not amenable
to a formulation in terms of two difference equations? (b) Can the
prerequisites for a such a theorem be axiomatized (and checked) to
include, for example, ground fields of elliptic functions?

\section{$\Gamma$-difference modules\label{sec:-=00005CGamma difference}}

\subsection{Definitions and examples}

Let $K$ be a field and $\Gamma$ a group, acting on $K$ by automorphisms.
We make no assumption whatsoever on the nature of $\Gamma$, nor do
we require the action to be faithful. In fact, the case of a trivial
action is not excluded. The fixed field $C=K^{\Gamma}$ is called
the \emph{field of constants}.
\begin{defn}
A $\Gamma$\emph{-difference module} over $K$ is a finite dimensional
$K$-vector space $M$ equipped with a semi-linear action of $\Gamma$.
In other words, for every $\gamma\in\Gamma$ there is a $\Phi_{\gamma}\in GL_{C}(M)$
satisfying
\[
\Phi_{\gamma}(av)=\gamma(a)\Phi_{\gamma}(v)\,\,\,(a\in K,\,v\in M)
\]
and
\[
\Phi_{\gamma\delta}=\Phi_{\gamma}\circ\Phi_{\delta}\,\,\,(\gamma,\delta\in\Gamma).
\]
\end{defn}

If the action of $\Gamma$ on $K$ is trivial, this notion is nothing
but a linear representation of $\Gamma$ over $K$. If $K_{0}\subset K$
is a $\Gamma$-invariant subfield then we say that $M$ \emph{descends
to} $K_{0}$, or has an underlying $K_{0}$\emph{-structure}, if there
exists a $\Gamma$-difference module $M_{0}$ over $K_{0}$ such that
$M\simeq K\otimes_{K_{0}}M_{0},$ the $\Gamma$-action extended semi-linearly.
This may apply in particular to $K_{0}=C.$ In general, if $M$ descends
to $K_{0},$ $M_{0}$ need not be unique, not even up to an isomorphism
over $K_{0}$. If $M$ descends to the trivial module over $C$, i.e.
if $M\simeq K^{r}$, $\Gamma$ acting in the coordinates, we call
$M$ \emph{trivial.}
\begin{example}
If $\Gamma$ is a finite group acting faithfully then $K/C$ is a
Galois extension with $Gal(K/C)=\Gamma$, and Hilbert's theorem 90
says that every $\Gamma$-difference module over $K$ is trivial.
\end{example}

Denoting by $K\left\langle \Gamma\right\rangle $ the twisted group
ring of $\Gamma$ over $K$, a $\Gamma$-difference module is nothing
but a $K\left\langle \Gamma\right\rangle $-module, finite dimensional
over $K$. The category of $\Gamma$-difference modules over $K$
will be denoted $\Gamma\mathrm{Diff}_{K}$. The tensor product $M\otimes N$
of two $\Gamma$-difference modules is defined as the tensor product
over $K$, with the usual $\Gamma$-action, $\Phi_{\gamma}(u\otimes v)=\Phi_{\gamma}(u)\otimes\Phi_{\gamma}(v).$
The dual $M^{\vee}$ is defined as the space of $K$-linear functionals
$\lambda:M\to K$ with the action
\[
\Phi_{\gamma}(\lambda)(v)=\gamma(\lambda(\Phi_{\gamma}^{-1}v)),
\]
and the internal hom is $\underline{Hom}(M,N)=M^{\vee}\otimes N.$

It is easily checked that with these definitions $\Gamma\mathrm{Diff}_{K}$
becomes a rigid abelian tensor category (\cite{De-Mi}, Definition
1.15). The object $\underline{1}$ is the trivial $\Gamma$-module
$K$ and $End(\underline{1})=C$. The Tannakian formalism applies
to this category, as described for example in the last section of
the first chapter of \cite{vdP-Si}, but we shall not dwell on this
aspect here.

The following easy Proposition should be viewed as a generalization
of Hilbert's theorem 90.
\begin{prop}
Suppose that
\[
1\to\Delta\to\Gamma\to\overline{\Gamma}\to1
\]
is a short exact sequence of groups, $\Delta$ is finite, and the
action of $\Delta$ on $K$ is faithful. Let $K_{0}=K^{\Delta}.$
Then the categories $\Gamma\mathrm{Diff}_{K}$ and $\overline{\Gamma}\mathrm{Diff}_{K_{0}}$
are equivalent.
\end{prop}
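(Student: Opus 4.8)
The plan is to construct explicit functors in both directions and check that they are mutually quasi-inverse, the only real input being Hilbert's theorem 90 for the finite Galois extension $K/K_{0}$. First I would record the two ``descent'' facts that make everything work, both consequences of $\Delta$ being \emph{normal} in $\Gamma$. Since $\Delta$ is finite and acts faithfully, Artin's theorem gives that $K/K_{0}$ is Galois with $Gal(K/K_{0})=\Delta$, in particular $[K:K_{0}]=|\Delta|$. Normality then yields: (i) $\Gamma$ preserves $K_{0}$, because for $\gamma\in\Gamma$, $\delta\in\Delta$, $a\in K_{0}$ one has $\delta(\gamma a)=\gamma\bigl((\gamma^{-1}\delta\gamma)a\bigr)=\gamma(a)$; moreover the resulting $\Gamma$-action on $K_{0}$ is trivial on $\Delta$, hence factors through $\overline{\Gamma}$. (ii) For a $\Gamma$-difference module $M$, the space $M^{\Delta}$ of $\Delta$-invariants is $\Gamma$-stable, by the same computation $\Phi_{\delta}\Phi_{\gamma}v=\Phi_{\gamma}\Phi_{\gamma^{-1}\delta\gamma}v=\Phi_{\gamma}v$ for $v\in M^{\Delta}$; and again the $\Gamma$-action on $M^{\Delta}$ is trivial on $\Delta$, so it factors through $\overline{\Gamma}$.

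Next I would define the functor $F\colon\Gamma\mathrm{Diff}_{K}\to\overline{\Gamma}\mathrm{Diff}_{K_{0}}$ by $M\mapsto M^{\Delta}$, equipped with the $K_{0}$-vector space structure and the $\overline{\Gamma}$-action just described. The relation $\Phi_{\gamma}(av)=\gamma(a)\Phi_{\gamma}v$ with $a\in K_{0}$ and $\gamma a\in K_{0}$ shows this action is semilinear over $\overline{\Gamma}$ acting on $K_{0}$; and $M^{\Delta}$ is finite dimensional over $K_{0}$, indeed $\dim_{K_{0}}M^{\Delta}=\dim_{K}M$ by Hilbert 90. In the other direction I would define $G\colon\overline{\Gamma}\mathrm{Diff}_{K_{0}}\to\Gamma\mathrm{Diff}_{K}$ by $M_{0}\mapsto K\otimes_{K_{0}}M_{0}$ with $\Phi_{\gamma}(a\otimes v)=\gamma(a)\otimes\Phi_{\overline{\gamma}}(v)$; this is well defined over $K_{0}$ precisely because the $\Gamma$-action and the $\overline{\Gamma}$-action agree on $K_{0}$ by (i), and it defines a $\Gamma$-difference module structure because $\gamma\mapsto\overline{\gamma}$ is a homomorphism.

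Finally I would produce the natural isomorphisms $F\circ G\cong\mathrm{id}$ and $G\circ F\cong\mathrm{id}$. For $M_{0}\in\overline{\Gamma}\mathrm{Diff}_{K_{0}}$, the group $\Delta$ acts on $K\otimes_{K_{0}}M_{0}$ only through the first factor, so picking a $K_{0}$-basis of $M_{0}$ identifies $(K\otimes_{K_{0}}M_{0})^{\Delta}$ with $K^{\Delta}\otimes_{K_{0}}M_{0}=M_{0}$, compatibly with the $\overline{\Gamma}$-action. For $M\in\Gamma\mathrm{Diff}_{K}$, the multiplication map $K\otimes_{K_{0}}M^{\Delta}\to M$, $a\otimes v\mapsto av$, is an isomorphism of $\Delta$-difference modules by Hilbert's theorem 90 (the Example above, applied to $\Delta$ and $K/K_{0}$), and a one-line check using $\Phi_{\gamma}(av)=\gamma(a)\Phi_{\gamma}v$ together with (ii) shows it is $\Gamma$-equivariant. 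This establishes the equivalence. I would add the remark that $F$ and $G$ are manifestly exact and compatible with tensor products, duals and the unit object, so the equivalence is in fact one of rigid abelian tensor categories, although this refinement is not needed for the statement.

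I do not expect a genuine obstacle here: the mathematical content is exactly Hilbert's theorem 90 for $K/K_{0}$, and the only point requiring care is purely bookkeeping --- verifying that each compatibility one wants ($\Gamma$-stability of $K_{0}$ and of $M^{\Delta}$, factorization of the residual actions through $\overline{\Gamma}$, $\Gamma$-equivariance of the descent isomorphism) follows from normality of $\Delta$ via the single conjugation identity $\gamma^{-1}\delta\gamma\in\Delta$. So the ``hard part'' is merely to match the two semilinear structures on the nose.
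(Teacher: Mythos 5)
Your proof is correct and follows essentially the same route as the paper: the same pair of functors $M\mapsto M^{\Delta}$ and $M_{0}\mapsto K\otimes_{K_{0}}M_{0}$, with Hilbert's theorem 90 for $K/K_{0}$ supplying the descent and normality of $\Delta$ supplying the $\overline{\Gamma}$-structures. You merely spell out the conjugation computations and the equivariance of the counit in more detail than the paper does.
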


\begin{proof}
Consider the two functors $\alpha:\Gamma\mathrm{Diff}_{K}\to\overline{\Gamma}\mathrm{Diff}_{K_{0}}$
and $\beta:\overline{\Gamma}\mathrm{Diff}_{K_{0}}\to\Gamma\mathrm{Diff}_{K}$
defined by
\[
\alpha(M)=M^{\Delta},\,\,\,\beta(M_{0})=K\otimes_{K_{0}}M_{0}.
\]
Since $\Delta$ is normal in $\Gamma$, the action of $\Gamma$ on
$K$ induces an action of $\overline{\Gamma}$ on $K_{0}$ and $\alpha(M)$
becomes a $\overline{\Gamma}$-difference module. Likewise, $\beta(M_{0})$
becomes a $\Gamma$-difference module if the action of $\Gamma$ on
$M_{0}$ (which factors through $\overline{\Gamma}$) is extended
semi-linearly to $K\otimes_{K_{0}}M_{0}$. Hilbert's Theorem 90 says
that when we restrict the action from $\Gamma$ to $\Delta$ these
two functors give an equivalence beween $\Delta\mathrm{Diff}_{K}$
and $\overline{\Delta}\mathrm{Diff}_{K_{0}},$ the latter being the
category of finite dimensional vector spaces over $K_{0}$. In particular
$\dim_{K_{0}}\alpha(M)=\dim_{K}(M)$ and $\dim_{K}\beta(M_{0})=\dim_{K_{0}}(M_{0}).$
Galois theory gives $\alpha\circ\beta(M_{0})=M_{0}.$ On the other
hand there is an injective map $K\otimes_{K_{0}}\alpha(M)\to M$ respecting
the action of $\Gamma,$ so by dimension counting it must be an isomorphism
and we also have $\beta\circ\alpha(M)=M.$
\end{proof}
Thus, when studying $\Gamma$-difference modules over a field $K$,
we may always factor out finite normal subgroups of $\Gamma$, if
they act faithfully on $K$. For example, if $\Gamma$ is a semisimple
algebraic group acting faithfully on $K$, we may assume, without
loss of generality, that it is of adjoint type.

\subsection{Matrices and classification\label{subsec:Matrices}}

If we choose a basis $e_{1},\dots,e_{r}$ of $M$ over $K$ we may
associate to any $\gamma\in\Gamma$ its matrix $(a_{ij})$, defined
by
\[
\Phi_{\gamma}(e_{j})=\sum_{i=1}^{r}a_{ij}e_{i}.
\]
It is customary to denote by $A_{\gamma}$ the \emph{inverse} of this
matrix, namely $A_{\gamma}^{-1}=(a_{ij}).$ The condition $\Phi_{\gamma}\circ\Phi_{\delta}=\Phi_{\gamma\delta}$
gets translated to the \emph{consistency condition
\[
A_{\gamma\delta}=\gamma(A_{\delta})\cdot A_{\gamma},
\]
}which must hold for every $\gamma,\delta\in\Gamma.$ Conversely,
a collection of matrices $\{A_{\gamma}\}$ satisfying the above conditions,
termed a \emph{consistent collection of matrices}, defines a $\Gamma$-difference
module structure on $K^{r}$ by letting 
\[
\Phi_{\gamma}(v)=A_{\gamma}^{-1}\gamma(v).
\]

If $e'_{1},\dots,e'_{r}$ is another basis and $C=(c_{ij})$ is the
transition matrix, i.e.
\[
e'_{j}=\sum_{i=1}^{r}c_{ij}e_{i},
\]
then the matrix $A'_{\gamma}$ corresponding to $\gamma$ in the new
basis is
\begin{equation}
A'_{\gamma}=\gamma(C)^{-1}A_{\gamma}C.\label{eq:gauge}
\end{equation}
The equivalence relation defined by $(\ref{eq:gauge})$ is called
\emph{gauge equivalence. }It follows that $\Gamma$-difference modules
of rank $r$ over $K$ are classified by gauge equivalence classes
of consistent collections of matrices $A_{\gamma}\in GL_{r}(K)$,
or what is the same, by the non-abelian cohomology
\[
H^{1}(\Gamma,GL_{r}(K)).
\]

\begin{example}
\label{exa:examples}(i) $\Gamma=\left\langle \sigma\right\rangle $
is infinite cyclic. In this case a consistent collection is determined
uniquely by $A_{\sigma}$, which may be chosen arbitrarily, and the
gauge equivalence classes correspond to the twisted conjugacy classes
\[
B(GL_{r}(K))=GL_{r}(K)/\sim
\]
where $A'\sim A$ if there exists a $C\in GL_{r}(K)$ with $A'=\sigma(C)^{-1}AC.$

(ii) Let $k$ be a perfect field of characteristic $p$ and $K=W(k)[1/p]$
where $W(k)$ is the ring of Witt vectors of $k$. Let $\sigma$ denote
the Frobenius automorphism of $K$ and $\Gamma=\left\langle \sigma\right\rangle .$
A $\Gamma$-difference module over $K$ is called also an $F$\emph{-isocrystal}.
This notion is central to $p$-adic Hodge theory.

(iii) Replacing the group $GL_{r}(K)$ by $G(K)$ for an arbitrary
linear algebraic group $G$ over $K,$ one arrives at the notion of
a \emph{$\Gamma$-difference module with $G$-structure.} These objects
are classified by $H^{1}(\Gamma,G(K))$, and when $\Gamma=\left\langle \sigma\right\rangle $
by $B(G(K)),$ defined as above. In example (ii) they have been analyzed
in \cite{Kot}.

(iv) $\Gamma=\left\langle \sigma,\tau\right\rangle \simeq\mathbb{Z}^{2}$
(i.e. $\sigma$ and $\tau$ commute and are multiplicatively independent:
$\sigma^{a}\tau^{b}=1$ if and only if $a=b=0$). In this case a $\Gamma$-difference
module is defined by the pair $(A_{\sigma},A_{\tau}),$ subject to
the consistency condition
\begin{equation}
\sigma(A_{\tau})A_{\sigma}=\tau(A_{\sigma})A_{\tau},\label{eq:consistency}
\end{equation}
up to gauge equivalence. This is the example underlying the two theorems
cited in the introduction.
\end{example}

\subsection{Difference modules and difference equations}

From now on let, as in the intoduction,
\[
K=\mathbb{C}(x),\,\,\,k=\mathbb{C}((x)).
\]
To give a uniform treatment of Theorem \ref{thm:L-vdP} (case 2M)
and Theorem \ref{thm:Bezivin} (case 2Q) we introduce also the fields
\[
\widetilde{K}=\bigcup_{s=1}^{\infty}\mathbb{C}(x^{1/s})
\]
and
\[
\widetilde{k}=\bigcup_{s=1}^{\infty}\mathbb{C}((x^{1/s})).
\]
The field $\widetilde{k}$ is the field of Puiseux series, and is
the algebraic closure of $k.$ 

In both theorems, $\sigma$ and $\tau$ are endomorphisms of the algebraic
group $\mathbb{G}=\mathbb{G}_{m,\mathbb{C}}$ or $\mathbb{G}_{a,\mathbb{C}},$
and can be extended to \emph{automorphisms} of its universal covering
$\mathbb{\widetilde{G}}$. In the $q$-difference case (2Q) the additive
group is simply connected, so $\mathbb{G}=\widetilde{\mathbb{G}}$.
In the Mahler case (2M) the extension of $\sigma$ or $\tau$ to an
automorphism of $\widetilde{\mathbb{G}}$ depends on the choice of
a compatible sequence of $s$th roots of the function $x$, namely
$\sigma(x^{1/s})=x^{p/s}$ and $\tau(x^{1/s})=x^{q/s}.$ We fix such
a choice once and for all. Replacing $x^{1/s}$ by $\zeta_{s}x^{1/s}$
where $\zeta_{s}$ is an $s$th root of 1 (and, to maintain the compatibility,
$\zeta_{st}^{t}=\zeta_{s}$) results in twisting the action of $\sigma$
on $x^{1/s}$ by $\zeta_{s}^{p-1}$ and the action of $\tau$ on the
same element by $\zeta_{s}^{q-1}.$ The field $\widetilde{K}$ is
the function field of $\mathbb{\widetilde{G}},$ and $\sigma$ and
$\tau$ induce automorphisms of $\widetilde{K}$ and of $\widetilde{k}.$ 

In both cases we therefore let
\[
\Gamma=\left\langle \sigma,\tau\right\rangle ,
\]
acting via automorphisms on the fields $K,k$ in case 2Q, and on the
fields $\widetilde{K},\widetilde{k}$ in case 2M. The significance
of the assumption on the multiplicative independence of $p$ and $q$
is that $\Gamma\simeq\mathbb{Z}^{2}$. 
\begin{thm}[Main Theorem]
\label{thm:Main Theorem}  In either case 2Q or case 2M, any $\Gamma$-difference
module $M$ over $K$ (in case 2Q) or $\widetilde{K}$ (in case 2M)
has an underlying $\mathbb{C}$-structure $M_{0},$ i.e. there exists
a $\Gamma$-invariant $\mathbb{C}$-submodule $M_{0}\subset M$, such
that $M=K\otimes_{\mathbb{C}}M_{0}$ (case 2Q), or $M=\widetilde{K}\otimes_{\mathbb{C}}M_{0}$
(case 2M).
\end{thm}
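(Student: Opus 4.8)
The plan is to prove the theorem in two stages, first analyzing the formal picture at $x=0$ (working over $k$, resp. $\widetilde{k}$) and then globalizing. First I would show that over the completion the module $M$ becomes trivial — i.e. admits a $\Gamma$-invariant $\mathbb{C}$-structure after the base change $K\otimes_{\mathbb{C}}k$ (resp. $\widetilde{K}\otimes_{\mathbb{C}}\widetilde{k}$). The mechanism is a fixed-point / contraction argument exploiting the fact that one of the operators, say $\sigma$, is \emph{contracting} at $x=0$: because $\sigma(x)=x^{p}$ (case 2M) or $\sigma(x)=px$ with $|p|\neq 1$ after an automorphism of $\mathbb{C}$ (case 2Q), the operator $\Phi_\sigma$ on $M\otimes k$ can be put, by a gauge transformation, into a normal form with constant matrix $A_\sigma\in GL_r(\mathbb{C})$. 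Concretely, starting from $A_\sigma\in GL_r(k)$ one solves the twisted-conjugacy equation $A_\sigma^{const}=\sigma(C)^{-1}A_\sigma C$ order-by-order in the valuation: because $\sigma$ strictly increases the $x$-adic valuation of the non-constant part, each successive coefficient of $C$ is determined, and the series converges in $k$. This is the standard ``Frobenius normal form / slope filtration in the tame case'' computation.

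Next I would bring in the second operator. Having normalized $\Phi_\sigma$ to have constant matrix, the consistency condition \eqref{eq:consistency}, $\sigma(A_\tau)A_\sigma^{const}=\tau(A_\sigma^{const})A_\tau=A_\sigma^{const}A_\tau$ (since $\tau$ fixes constants), becomes $\sigma(A_\tau)=A_\sigma^{const}A_\tau (A_\sigma^{const})^{-1}$, i.e. $A_\tau$ transforms in a prescribed way under $\sigma$. Expanding $A_\tau = \sum A_\tau^{(j)} x^{j/s}$ and using that $\sigma$ sends $x^{j/s}\mapsto x^{jp/s}$ (resp. scales by $p^{j/s}$), one finds that the equation forces almost all the non-constant coefficients to vanish: for $j\neq 0$ one gets a relation $A_\tau^{(jp)}\cdot(\text{power of }p) = A_\sigma^{const}A_\tau^{(j)}(A_\sigma^{const})^{-1}$ type constraints whose only bounded (or: only convergent Laurent) solution is $A_\tau^{(j)}=0$. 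Hence $A_\tau$ is also constant, and $M\otimes k$ is trivial with $\mathbb{C}$-structure $M_0' = \ker(\Phi_\sigma - A_\sigma^{const})$ etc.; more precisely $M_0'$ is the common fixed space realizing the triviality. The point to be careful about is that this argument uses multiplicative independence of $p$ and $q$ in an essential way (otherwise $A_\tau$ need not be forced constant), and in case 2Q the reduction to $|p|\neq 1$ must be invoked.

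The hard part — and the heart of the globalization — is to descend this formal $\mathbb{C}$-structure from $k$ back to $K$ (resp. from $\widetilde{k}$ to $\widetilde{K}$). The formal trivialization gives a matrix $U\in GL_r(k)$ with $\gamma(U)^{-1}A_\gamma U \in GL_r(\mathbb{C})$ for all $\gamma\in\Gamma$, where $A_\gamma\in GL_r(K)$ are the given (global, rational) matrices. One must show $U$ can be chosen in $GL_r(K)$ — equivalently, that the entries of $U$ (a priori formal Laurent series) are in fact rational functions. The strategy, following Schäfke–Singer, is to play the two difference structures against each other: from the $\tau$-equation with \emph{rational} coefficients $A_\tau$ one deduces that $U$, which conjugates $A_\tau$ to a constant matrix, satisfies a $\tau$-difference system over $K$; the solution of such a system, once known to be a convergent power series, is constrained to be rational by an argument on the location and nature of its singularities (it can be continued along the orbit $q^{-n}x\to\infty$ or $x^{q^{-n}}\to 0$, the singularities accumulate only at finitely many ``bad'' points and at $0,\infty$, and a growth/finiteness estimate forces a rational function). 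I expect the main obstacle to be exactly this rationality step: controlling the singularities of the formal solution $U$ using both the $\sigma$- and $\tau$-structures simultaneously, and ruling out essential singularities or an infinite set of poles. Once $U\in GL_r(K)$ is established, $M_0 = U\cdot\mathbb{C}^r$ (with the constant $\Gamma$-action) is the desired $\mathbb{C}$-structure and $M = K\otimes_{\mathbb{C}} M_0$, completing the proof.
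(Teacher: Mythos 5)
Your two-stage skeleton (formal trivialization at a completion, then rationality of the gauge matrix) is the right outline, but both stages as you describe them have genuine gaps. In the formal step, your claim that $\Phi_\sigma$ alone can be put in constant form by solving $\sigma(C)^{-1}A_\sigma C=A_\sigma^{const}$ order by order is false in general: a single $q$-difference module over $k$ with a nonzero slope (e.g.\ rank one with $\Phi(e)=xe$) admits no such gauge, and likewise a single Mahler operator over $\widetilde{k}$ need not descend to $\mathbb{C}$. The order-by-order recursion only closes up in the slope-zero (regular-singular) case. The actual mechanism is the opposite of what you wrote: one first develops the slope filtration for one operator, and then uses the \emph{second} operator together with multiplicative independence to show all slopes vanish (if $\Phi_\tau v=cx^{\lambda}v$ then $\Phi_\sigma v$ is an eigenvector with eigenvalue $cp^{\lambda}x^{\lambda}$, and finiteness of the Jordan--H\"older factors forces $\lambda=0$). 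Only after that does the consistency equation force the second matrix to be constant, by the adjoint-eigenvalue argument you sketch.

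In the globalization step the decisive difficulties are not addressed. First, meromorphy of the gauge matrix $U=C_0$ on $\mathbb{C}$ does not give rationality; one must also trivialize the module formally at $x=\infty$, prove that $C_\infty$ converges there, and show that $C_0^{-1}C_\infty$ is constant by a Laurent-coefficient argument (only finitely many eigenvalues of $M\mapsto A_0MA_\infty^{-1}$), so that $C_0$ is meromorphic on all of $\mathbb{P}^1$. Second, and more seriously, in case 2M your proposed continuation "along the orbit $x^{q^{-n}}\to 0$" never leaves the unit disk: $|x|=1$ is a natural boundary for the functional equation $C_0(x^p)=A(x)C_0(x)A_0^{-1}$, which only propagates information within $|x|<1$. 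Crossing it is the heart of the Sch\"afke--Singer method and requires a third local trivialization at the fixed point $x=1$ (via $z=\log x$, where the Mahler operators become $z\mapsto pz$, $z\mapsto qz$), followed by a Fourier-expansion argument on $\widetilde{C}_0^{-1}\widetilde{C}_1$ in a sector, using \emph{both} functional equations and the fact that the numbers $e^{2\pi in\log q/\log p}$ are pairwise distinct by multiplicative independence. Without this ingredient the 2M case does not go through. (For 2Q you also need to justify the reduction to $|p|\ne 1$, e.g.\ by transporting the problem to $\mathbb{C}_\ell$ via an abstract field isomorphism when $p$ and $q$ both lie on the unit circle for every embedding.)
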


\begin{rem*}
(i) An equivalent formulation is that any pair $(A_{\sigma},A_{\tau})$
of matrices from $GL_{r}(K)$ (resp. $GL_{r}(\widetilde{K})$) satisfying
the consistency equation $(\ref{eq:consistency})$ is gauge-equivalent
to a pair $(A_{\sigma}^{0},A_{\tau}^{0})$ of constant matrices from
$GL_{r}(\mathbb{C})$.

(ii) Equivalently, the natural map $H^{1}(\Gamma,GL_{r}(\mathbb{C}))\to H^{1}(\Gamma,GL_{r}(K))$
(resp. $H^{1}(\Gamma,GL_{r}(\widetilde{K}))$) is surjective.

(iii) In case 2M the underlying complex structure is \emph{unique},
equiv. the pair $(A_{\sigma}^{0},A_{\tau}^{0})$ is unique up to conjugation
in $GL_{r}(\mathbb{C})$, equiv. the map $H^{1}(\Gamma,GL_{r}(\mathbb{C}))\to H^{1}(\Gamma,GL_{r}(K))$
is bijective. In the case 2Q this is false, already in rank 1. See
remark \ref{rem:Resonants}.

(iv) Note that in the formulation of the last theorem the field $k$
or $\widetilde{k}$ plays no role. It will, however, reappear in its
proof. Note also that the formulation of the theorem is purely algebraic.
By this we mean that if $\iota$ is an arbitrary automorphism of $\mathbb{C}$
and $M$ is a $\Gamma$-difference module, then so is the module $M^{\iota}=\mathbb{C}\otimes_{\iota,\mathbb{C}}M$
obtained from it by transport of structure, and $M$ descends to $\mathbb{C}$
if and only $M^{\iota}$ descends to $\mathbb{C}.$ The topological
or dynamical nature of $M^{\iota}$ may nevertheless be completely
different, as $\iota$ is, in general, non-continuous.
\end{rem*}
\begin{prop}
\label{prop:Modules_and_equations}Theorem \ref{thm:Main Theorem}
implies Theorem \ref{thm:L-vdP} and Theorem \ref{thm:Bezivin}.
\end{prop}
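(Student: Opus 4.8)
The plan is to unwind the dictionary between difference equations and difference modules, in both directions, and show that ``$f\in K$'' for all solutions is exactly the statement that a certain difference module descends to $\mathbb{C}$. First I would recall the standard companion-matrix construction: given a scalar Mahler (resp.\ $q$-difference) equation $\sum_{i=0}^{n}a_i\sigma^{n-i}(f)=0$ with $a_0=1$ (after normalizing), the operator $\sigma$ acts on the column vector $v=(f,\sigma f,\dots,\sigma^{n-1}f)^{t}$ by $\sigma(v)=A_{\sigma}v$ for the companion matrix $A_{\sigma}\in GL_n(K)$ built from the $a_i$. The same $f$ satisfies a second equation for $\tau$, so one also gets $\tau(w)=A_{\tau}w$ for a companion matrix of size $m$; the subtlety is that these two companion systems have different sizes and are built from different solution vectors, so one cannot naively combine them. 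The fix is to pass to the $K\langle\Gamma\rangle$-submodule $N\subset\widetilde{k}$ (or $k$) generated by $f$ over $K\langle\Gamma\rangle$: because $f$ satisfies a $\sigma$-equation of order $n$ and a $\tau$-equation of order $m$, and $\sigma,\tau$ commute, $N$ is spanned over $K$ by the finitely many elements $\sigma^{a}\tau^{b}(f)$ with $0\le a<n$, $0\le b<m$ (using the two recurrences to reduce all higher shifts), hence $N$ is a finite-dimensional $K$-subspace of $k$ (resp.\ $\widetilde{k}$) stable under $\Gamma$, i.e.\ a $\Gamma$-difference module over $K$ sitting inside the field $k$ (resp.\ $\widetilde{k}$).

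The second step is to apply the Main Theorem to $N$ (or rather to a module over $K$ in case 2Q, over $\widetilde{K}$ in case 2M; note $N$ is literally defined over $K$ in both cases, which is fine since $K\subset\widetilde{K}$). We obtain a $\Gamma$-invariant $\mathbb{C}$-structure $N_0\subset N$ with $N=K\otimes_{\mathbb{C}}N_0$ (resp.\ $\widetilde{K}\otimes_{\mathbb{C}}N_0$). Choosing a $\mathbb{C}$-basis $v_1,\dots,v_r$ of $N_0$, the matrices of $\sigma$ and $\tau$ in this basis are \emph{constant}, i.e.\ lie in $GL_r(\mathbb{C})$. Now write $f=\sum_{j}c_j v_j$ with $c_j\in K$ (resp.\ $\widetilde{K}$), using that $f\in N=K\otimes_{\mathbb{C}}N_0$. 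Here is where the field $k$ (resp.\ $\widetilde k$) reenters: since $N$ is realized \emph{inside} $k$, each $v_j\in k$, and $f=\sum c_j v_j$ is an identity of power series. But I need more: I want to conclude $c_j\in\mathbb{C}$, which is false in general. The correct conclusion is weaker and must be argued differently.

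So the third and decisive step is: having the constant matrices $A_\sigma^0,A_\tau^0\in GL_r(\mathbb{C})$ for the action on $N_0$, consider the vector $F=(f_1,\dots)$ of shifts realizing the cyclic structure, or more cleanly, observe that $\sigma$ restricted to $N$ is a $\mathbb{C}$-linear (not merely semilinear) endomorphism of the $\mathbb{C}$-vector space $N_0$ — \emph{no}, it is semilinear over $K$ but $\mathbb{C}$-linear on $N_0$ only if $N_0$ is $\sigma$-stable, which it is by construction. Thus the $\mathbb{C}$-span $N_0$ is a finite-dimensional $\mathbb{C}$-vector space on which $\sigma$ and $\tau$ act $\mathbb{C}$-linearly and invertibly and commute. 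Now I claim $f\in N_0$: indeed $f$ generates $N$ as a $K\langle\Gamma\rangle$-module, and $N=K\otimes_{\mathbb{C}}N_0$ with $\Gamma$ acting on the $N_0$ factor; decompose $f=\sum c_j v_j$ and apply $\sigma^a\tau^b$ — the $K$-span of $\{\sigma^a\tau^b f\}$ must be all of $N$, equivalently all of $K\otimes_{\mathbb C}N_0$, and a dimension/support count over $\mathbb{C}(x)$ forces the $c_j$ to already span, which combined with $f$ being a single vector that is ``constant up to the $\Gamma$-action'' gives $c_j\in\mathbb{C}$. Hence $f\in N_0\subset\mathbb{C}\subset$ — no. Let me restate the honest endgame: the point is simply that $N_0$, being a $\mathbb{C}$-structure, consists of vectors fixed by no nontrivial reason, but the elements of $N_0$, viewed inside $k$ via the inclusion $N\hookrightarrow k$, are power series on which $\Gamma$ acts by the \emph{constant} matrix $A^0$; writing $f$ in terms of a $\mathbb{C}$-basis of $N_0$ and comparing with the original solution vector (whose first coordinate is $f$ and whose coordinates span $N$ over $K$) yields that $f$ is a $\mathbb{C}$-linear combination of elements of $K$, hence $f\in K$ (resp.\ in $\widetilde K$, and then a separate descent argument using that $f$ satisfies equations \emph{over $K$} pins it down to $K$ — this last descent from $\widetilde K$ to $K$ in case 2M is the routine part).

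The main obstacle is precisely this last transition — converting ``$M=N$ descends to $\mathbb{C}$ as a difference module'' into ``the distinguished solution vector, and in particular $f$ itself, has coordinates in $K$ rather than just in $k$.'' The clean way to do it, which I would adopt, is to use the trivialization directly: the inclusion $N\hookrightarrow k$ gives a horizontal (i.e.\ $\Gamma$-equivariant) $K$-linear map, and tensoring the identity $N\simeq K\otimes_{\mathbb{C}}N_0$ up to $k$ and invoking that $k^\Gamma = \mathbb{C}$ (the constants of $k$ under $\Gamma$ in cases 2Q, 2M are exactly $\mathbb{C}$, which must be checked and is where multiplicative independence of $p,q$ is used) shows that the composite $N_0\hookrightarrow N\hookrightarrow k$ lands in the $\Gamma$-fixed part only after untwisting, and a direct chase gives $N\subset K$ inside $k$. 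Once $N\subset K$, we get $f\in N\subset K$ in case 2Q immediately, and in case 2M we get $f\in\widetilde K\cap k=K$ since $f$ was assumed to lie in $k$. I expect writing out the $k^\Gamma=\mathbb{C}$ computation and the untwisting bookkeeping to be the only nontrivial content; everything else is formal manipulation of the module-equation dictionary established in Section~\ref{sec:-=00005CGamma difference}.
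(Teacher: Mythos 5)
Your first two steps match the paper: form the $K\langle\Gamma\rangle$-span $N$ of $f$, note it is finite-dimensional because $\sigma,\tau$ commute and $f$ satisfies both equations, and apply the Main Theorem to get a $\mathbb{C}$-structure $N_0$. The gap is in the endgame, and you are aware of it but never close it. The decisive point is not ``$k^{\Gamma}=\mathbb{C}$ plus untwisting'': that route cannot work as stated in case 2Q, because the basis elements of $N_0$ need not be constants there (already for $B=(p)$ in rank $1$, the equation $e(px)=p\,e(x)$ has the solution $e=x$), so no amount of untwisting will land them in the $\Gamma$-fixed part of $k$. What is actually needed is the following concrete computation, which is the whole content of the paper's proof beyond the formal setup. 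The basis vector $\underline{e}={}^{t}(e_1,\dots,e_r)$ of $N_0$ consists of specific elements of $k$ (after replacing $x$ by $x^{1/s}$ in case 2M), and satisfies $\underline{e}(\sigma x)=B\,\underline{e}(x)$ with $B\in GL_r(\mathbb{C})$. Writing $\underline{e}=\sum_{n\ge n_0}v_nx^n$ and comparing coefficients: in case 2M one gets $\sum v_nx^{pn}=\sum Bv_nx^n$, forcing $v_n=0$ for $n\ne 0$, so each $e_i\in\mathbb{C}$; in case 2Q one gets $p^nv_n=Bv_n$, and since $B$ has only finitely many eigenvalues and $p$ is not a root of unity, $v_n=0$ for all but finitely many $n$, so each $e_i$ is a Laurent polynomial, i.e.\ lies in $K$. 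Either way $N=\sum Ke_i\subset K$ (and $K$-linear independence of the $e_i$ forces $r=1$), whence $f\in K$, with the final descent $\widetilde{K}\cap k=K$ in case 2M as you say.

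Two smaller points. Your middle paragraph contains a claim you then retract (``a dimension/support count \dots gives $c_j\in\mathbb{C}$''), which is indeed false and should be deleted rather than left as a dead end. And multiplicative independence of $p$ and $q$ is not what makes the endgame work; it is consumed entirely by the Main Theorem. The endgame uses only one operator and the elementary facts above.
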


\begin{proof}
Observe first that in the case 2M, to prove Theorem \ref{thm:L-vdP}
it is enough to prove the analogous theorem with $K$ and $k$ replaced
by $\widetilde{K}$ and $\widetilde{k}$, where (the extended) $\sigma$
and $\tau$ become automorphisms. This is because $k\cap\widetilde{K}=K.$
To unify the notation, in this proof only, we let the symbols $k$
and $K$ stand, in case 2M, for the fields $\widetilde{k}$ and $\widetilde{K}$.

Let $M\subset k$ be the $K\left\langle \Gamma\right\rangle $-span
of $f,$ and let $\Phi_{\sigma}=\sigma$ and $\Phi_{\tau}=\tau.$
The condition imposed on $f,$ that it simultaneously satisfies the
two functional equations with coefficients from $K$, is equivalent
to the condition
\[
\dim_{K}M<\infty.
\]
Indeed, thanks to the commutativity of $\Gamma,$ $M$ is spanned
by $\sigma^{i}\tau^{j}f$ for $0\le i<n$ and $0\le j<m.$

Let $e_{1},\dots,e_{r}$ be a basis over $\mathbb{C}$ of the submodule
$M_{0}$, whose existence is guaranteed by Theorem \ref{thm:Main Theorem}.
Then
\[
f\in M=\sum_{i=1}^{r}Ke_{i}\subset k.
\]
Replacing $x$ by $x^{1/s}$ for some $s$ in the case 2M, we may
assume that all the $e_{i}$ are in $\mathbb{C}((x)).$ The column
vector $\underline{e}=\,^{t}(e_{1},\dots,e_{r})$ satisfies
\[
\underline{e}(\sigma x)=B\underline{e}(x)
\]
for an invertible constant matrix $B\in GL_{r}(\mathbb{C})$. (In
the notation introduced above, $B=(^{t}A_{\sigma}^{0})^{-1}$). Write
$\underline{e}=\sum_{n=n_{0}}^{\infty}v_{n}x^{n}$ with $v_{n}\in\mathbb{C}^{r}.$
In case 2M this gives
\[
\sum_{n=n_{0}}^{\infty}v_{n}x^{pn}=\sum_{n=n_{0}}^{\infty}Bv_{n}x^{n},
\]
from where we deduce that $v_{n}=0$ for $n\ne0$, so each $e_{i}\in\mathbb{C}.$
In case 2Q the same equation gives
\[
\sum_{n=n_{0}}^{\infty}v_{n}p^{n}x^{n}=\sum_{n=n_{0}}^{\infty}Bv_{n}x^{n},
\]
from where we deduce that $v_{n}=0$ for $n$ sufficiently large,
since the matrix $B$ can have only finitely many eigenvalues. Thus
in this case, too, all the $e_{i}\in K$. As the $e_{i}$ are linearly
independent over $K$, in both cases we must have $r=1$ and $f\in Ke_{1}=K\subset k$.
\end{proof}

\section{The structure of formal $\Gamma$-difference modules\label{sec:Formal}}

The results of this part appear in various variations in the literature,
sometimes over the fields of Hahn series or of convergent power series
replacing the fields of Puiseux or Laurent series. The ideas date
back to works of Manin and Dieudonné on formal groups. We prove all
that we shall need later on in the global theory from first principles.
The reader may consult \cite{Roq,Sau,vdP-Re} and the references therein
for the historical development of the subject, and for further results. 

\subsection{Formal $(p,q)$-difference modules}

\subsubsection{Rank 1 formal $q$-difference modules}

In this section we prove an analogue of the Main Theorem over $k$
instead of $K,$ in the case 2Q of two $q$-difference operators.
The case of two Mahler operators will be discussed in the next section.
One starts by examining the structure of a $\Gamma$-difference module
$M$, for $\Gamma=\left\langle \tau\right\rangle $ infinite cyclic.
Adding a second multiplicatively independent and commuting operator
$\sigma$ imposes a serious restriction on the structure of $M$,
and forces it to descend to $\mathbb{C}$.

Let $q\in\mathbb{C}^{\times}$ and assume that $q$ is not a root
of unity. Fix once and for all a compatible sequence of roots $q^{1/s}.$
Let $\Gamma=\left\langle \tau\right\rangle $ act on the field $\widetilde{k}=\bigcup_{s=1}^{\infty}\mathbb{C}((x^{1/s}))$
via $\tau f(x^{1/s})=f(q^{1/s}x^{1/s}).$ (To get the results below
we have to work over $\widetilde{k}$, although $\tau$ is already
an automorphism of $k.$) Write $k_{s}=\mathbb{C}((x^{1/s})).$

A $\Gamma$-difference module over $\widetilde{k}$ is the same as
a $\widetilde{k}\left\langle \Phi,\Phi^{-1}\right\rangle $-module
which is finite dimensional over $\widetilde{k}$. Here the twisted
Laurent polynomials ring $\widetilde{k}\left\langle \Phi,\Phi^{-1}\right\rangle $
satisfies the relation
\[
\Phi a=\tau(a)\Phi
\]
for $a\in\widetilde{k}.$ We shall call a $\Gamma$-difference module
over $\widetilde{k}$ or over $k$ also a (formal) $q$-difference
module.

Rank-1 $q$-difference modules over $\widetilde{k}$ are classified
by $\widetilde{k}^{\times}/(\widetilde{k}^{\times})^{\tau-1},$ see
(i) from example \ref{exa:examples} with $r=1.$ Every element $f\in\widetilde{k}^{\times}$
can be written uniquely as $cx^{\lambda}f_{1}$ where $c\in\mathbb{C}^{\times}$,
$f_{1}\in U_{1}(k_{s})=1+x^{1/s}\mathbb{C}[[x^{1/s}]]$ (the principal
units of $k_{s})$ for some $s\in\mathbb{N}$, and $\lambda\in\mathbb{Q}.$
Since $f_{1}=g_{1}^{\tau-1}$ for $g_{1}\in U_{1}(k_{s})$ (solve
successively for the coefficients of $g_{1},$ using the fact that
$q$ is not a root of unity), and since $(x^{\mu})^{\tau-1}=q^{\mu},$
we see that classes in $\widetilde{k}^{\times}/(\widetilde{k}^{\times})^{\tau-1}$
are represented by $cx^{\lambda},$ where $\lambda$ (the \emph{slope})
is uniquely determined, and $c\in\mathbb{C}^{\times}$ is determined
up to multiplication by $q^{\alpha}$ for some $\alpha\in\mathbb{Q}.$
We therefore have the following easy Proposition.
\begin{prop}
Let $c\in\mathbb{C}^{\times}$ and $\lambda\in\mathbb{Q}$. Let $I_{\lambda,c}=\widetilde{k}e$
with $\Phi(e)=cx^{\lambda}e.$ Then every rank-1 $q$-difference module
over $\widetilde{k}$ is isomorphic to some $I_{\lambda,c}$ and $I_{\lambda,c}\simeq I_{\mu,d}$
if and only if $\lambda=\mu$ and $cd^{-1}=q^{\alpha}$ for some $\alpha\in\mathbb{Q}.$
\end{prop}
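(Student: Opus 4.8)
The plan is to reduce everything to the classification of rank-$1$ $q$-difference modules by the quotient $\widetilde{k}^{\times}/(\widetilde{k}^{\times})^{\tau-1}$ recalled in part (i) of Example \ref{exa:examples} with $r=1$: the module $\widetilde{k}e$ with $\Phi(e)=fe$ corresponds to the class of $f$, and the modules attached to $f$ and $f'$ are isomorphic exactly when $f'/f=\tau(c)/c$ for some $c\in\widetilde{k}^{\times}$. Under this dictionary $I_{\lambda,c}$ is the module attached to the monomial $cx^{\lambda}$, so the Proposition amounts to two assertions: (a) every element of $\widetilde{k}^{\times}$ is congruent, modulo $(\widetilde{k}^{\times})^{\tau-1}$, to a monomial $cx^{\lambda}$; and (b) $cx^{\lambda}$ and $dx^{\mu}$ are congruent if and only if $\lambda=\mu$ and $cd^{-1}=q^{\alpha}$ for some $\alpha\in\mathbb{Q}$.

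For (a): given $f\in\widetilde{k}^{\times}$, choose $s$ with $f\in k_{s}^{\times}$, put $t=x^{1/s}$, and write $f=cx^{\lambda}f_{1}$ with $c\in\mathbb{C}^{\times}$, $\lambda=v(f)\in\frac{1}{s}\mathbb{Z}$ (for the natural $\mathbb{Q}$-valued valuation $v$ on $\widetilde{k}$, normalized by $v(x)=1$), and $f_{1}\in U_{1}(k_{s})$; this decomposition is unique. It suffices to show $f_{1}\in(U_{1}(k_{s}))^{\tau-1}$. Writing $f_{1}=1+\sum_{n\ge1}a_{n}t^{n}$, we look for $g_{1}=1+\sum_{n\ge1}b_{n}t^{n}$ with $\tau(g_{1})=f_{1}g_{1}$; since $\tau(t)=q^{1/s}t$, comparing coefficients of $t^{n}$ gives
\[
(q^{n/s}-1)\,b_{n}=a_{n}+\sum_{i+j=n,\,i,j\ge1}a_{i}b_{j},
\]
and as $q$ is not a root of unity, $q^{n/s}\ne1$ for all $n\ge1$, so the $b_{n}$ are uniquely determined in turn. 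Hence $f\equiv cx^{\lambda}$.

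For (b): since $\tau$ preserves $v$, every element of $(\widetilde{k}^{\times})^{\tau-1}$ has valuation $0$; therefore $v(cx^{\lambda})=\lambda$ is an invariant of the class, forcing $\lambda=\mu$ whenever $cx^{\lambda}\equiv dx^{\mu}$. Assume then $\lambda=\mu$. The congruence $cx^{\lambda}\equiv dx^{\lambda}$ means $cd^{-1}=\tau(g)/g$ for some $g\in\widetilde{k}^{\times}$; writing $g=c'x^{\nu}g_{1}$ with $g_{1}\in U_{1}$ gives $\tau(g)/g=q^{\nu}\cdot\tau(g_{1})/g_{1}$, and since $\tau(g_{1})/g_{1}$ is again a principal unit while $cd^{-1}q^{-\nu}$ is a constant, we must have $\tau(g_{1})/g_{1}=1$. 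The degree-$n$ relation then reads $(q^{n/s}-1)b_{n}=0$, so $g_{1}=1$ and $cd^{-1}=q^{\nu}$ with $\nu\in\mathbb{Q}$. Conversely $q^{\nu}=\tau(x^{\nu})/x^{\nu}$, so $cx^{\lambda}\equiv q^{\nu}cx^{\lambda}$. Combining (a) and (b) proves the Proposition.

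The only substantive point, invoked twice, is that $\tau-1$ is ``surjective'' onto $U_{1}(k_{s})$ and has no nontrivial fixed principal units; both facts reduce to $q^{n/s}\ne1$ for $n\ge1$, i.e.\ to $q$ not being a root of unity, and I expect that to be the sole place requiring any attention, the remainder being routine bookkeeping with valuations and leading coefficients.
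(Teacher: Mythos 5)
Your proof is correct and follows essentially the same route as the paper: decompose $f=cx^{\lambda}f_{1}$, kill the principal unit $f_{1}$ by solving $\tau(g_{1})=f_{1}g_{1}$ coefficient by coefficient using that $q$ is not a root of unity, and use $(x^{\nu})^{\tau-1}=q^{\nu}$ together with the $\tau$-invariance of the valuation to pin down $\lambda$ exactly and $c$ up to $q^{\mathbb{Q}}$. You merely spell out the recursion that the paper leaves as "solve successively for the coefficients of $g_{1}$."
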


\subsubsection{Newton polygons}

We review well-known facts about Newton polygons and slopes. Let $\nu:\widetilde{k}^{\times}\to\mathbb{Q}$
be the valuation of $\widetilde{k}$, normalized by $\nu(x)=1.$ If
$P\in\widetilde{k}[T],$
\[
P(T)=a_{0}T^{r}+\cdots+a_{r-1}T+a_{r}
\]
$a_{0},a_{r}\ne0,$ we consider the points $(i,\nu(a_{i}))\in\mathbb{R}^{2}$
($0\le i\le r).$ The highest piecewise linear convex graph lying
on or below these points is called the Newton polygon $\mathcal{N}_{P}$
of $P$. It has two vertical edges, connecting $(0,\infty)$ to $(0,\nu(a_{0})),$
and $(r,\nu(a_{r}))$ to $(r,\infty).$ The other edges have rational
slopes $\lambda_{1}<\lambda_{2}<\cdots<\lambda_{s}$ and integral
horizontal lengths $r_{1},r_{2},\dots,r_{s}$ with $\sum r_{i}=r.$
The polynomial $P$ has precisely $r_{i}$ roots $\alpha$ in $\widetilde{k}^{\times}$
of valuation $\nu(\alpha)=\lambda_{i}.$ If we make a change of variable
$Q(T)=P(a^{-1}T)$ then $\mathcal{N}_{Q}$ has slopes $\lambda_{i}+\nu(a).$
After such a change of variables, we may therefore assume, when dealing
with Newton polygons, that the smallest slope of $P$ is 0. The definition
of $\mathcal{N}_{P}$ may be extended to an arbitrary non-zero $P\in\widetilde{k}[T,T^{-1}]$
so that $\mathcal{N}_{PT}$ is obtained from $\mathcal{N}_{P}$ by
a horizontal shift one unit to the right. It has the same slopes and
the same horizontal lengths.

If $P\in\widetilde{k}[T]$ is written as above we let $P(\Phi)=\sum_{i=0}^{r}a_{i}\Phi^{r-i}\in\widetilde{k}\left\langle \Phi,\Phi^{-1}\right\rangle .$
Note however that $P(T)\mapsto P(\Phi)$ is not a homomorphism, as
$\Phi$ does not commute with $\widetilde{k}$.

\bigskip{}

Let $M$ be a cyclic $q$-difference module, generated by the vector
$v.$ We shall later see (Birkhoff's cyclicity lemma) that \emph{every
}$q$-difference module is cyclic, but at this stage we do not know
it yet. Let $P(T)$ be a monic polynomial of minimal degree such that
$P(\Phi)v=0.$ Such a polynomial exists since the $\Phi^{i}v$ are
linearly dependent over $\widetilde{k}.$ Write
\begin{equation}
P(T)=T^{r}+a_{1}T^{r-1}+\cdots+a_{r-1}T+a_{r}.\label{eq:min.pol.}
\end{equation}
Then $a_{r}\ne0,$ since otherwise, as $\Phi^{-1}P(\Phi)v=0,$ the
polynomial $Q=\tau^{-1}(P)/T$ has degree $r-1$ and still satisfies
$Q(\Phi)v=0.$ Let $\mathcal{D}=\widetilde{k}\left\langle \Phi,\Phi^{-1}\right\rangle $
and consider the left ideal $\mathcal{D}P(\Phi).$ The homomorphism
of $\mathcal{D}$-modules
\[
\mathcal{D}/\mathcal{D}P(\Phi)\twoheadrightarrow M,
\]
sending $Q(\Phi)\in\mathcal{D}$ to $Q(\Phi)v$ is surjective. As
the module on the left is generated over $\widetilde{k}$ by $1,\Phi,\dots,\Phi^{r-1}$
and $M$ contains the linearly independent vectors $v,\Phi v,\dots,\Phi^{r-1}v,$
both sides have dimension $r$ and this map is an isomorphism.

If we replace $M$ by $M\otimes I_{\lambda,1}$ ($\lambda\in\mathbb{Q})$
and the cyclic vector $v$ by $v\otimes e$ where $e$ is the basis
of $I_{\lambda,1}$, then $\Phi^{i}(v\otimes e)=q^{\lambda\binom{i}{2}}x^{i\lambda}\Phi^{i}(v)\otimes e$
so the polynomial $P$ is replaced (up to a scalar multiple) by
\[
Q(T)=q^{-\lambda\binom{r}{2}}T^{r}+q^{-\lambda\binom{r-1}{2}}x^{\lambda}a_{1}T^{r-1}+\cdots+x^{(r-1)\lambda}a_{r-1}T+x^{r\lambda}a_{r}
\]
and the points $(i,\nu(a_{i}))$ by $(i,\nu(a_{i})+i\lambda).$ After
such a twist of $M$ we may assume that the slopes of $\mathcal{N}_{P}$
are $\ge0$ and that the first (smallest) slope is $0$.

Replacing the variable $x$ by some $x^{1/s}$ we may therefore assume
that all the roots of $P$ are in $k=\mathbb{C}((x))$, that the slopes
are integral, and that the smallest slope is 0. In particular, all
the $a_{i}\in\mathbb{C}[[x]]$.

\subsubsection{Factorization in $k\left\langle \Phi\right\rangle $}
\begin{lem}
\label{lem:factorization-1}Assume that in $(\ref{eq:min.pol.})$
the $a_{i}\in\mathbb{C}[[x]]$ $(i\ge1)$, and at least one of them
is a unit (these conditions are equivalent to the smallest slope of
$P$ being 0). Then there exists a unit $b_{0}\in\mathbb{C}[[x]]^{\times}$,$b_{1},\dots,b_{r-1}\in\mathbb{C}[[x]]$
and $c\in\mathbb{C}^{\times}$ such that in $k\left\langle \Phi\right\rangle $
we have
\[
P(\Phi)=\Phi^{r}+a_{1}\Phi^{r-1}+\cdots+a_{r}=\tau(b_{0})^{-1}(\Phi-c)(b_{0}\Phi^{r-1}+b_{1}\Phi^{r-2}+\cdots+b_{r-1}).
\]
\end{lem}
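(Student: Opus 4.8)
The plan is to factor off a single linear right factor $\Phi-c$ from $P(\Phi)$, and then handle the remaining degree-$(r-1)$ piece by dividing on the left. Since the smallest slope of $\mathcal{N}_P$ is $0$, the polynomial $P(T)$ has at least one root of valuation $0$; by the slope factorization of Newton polygons, $P$ has exactly $r_1$ roots in $k^\times$ that are units. Pick such a unit root and write its leading coefficient as $c\in\mathbb{C}^\times$; the guess is that there is a corresponding eigenvector in $M$, i.e.\ a vector $w$ with $\Phi(w)=cx^0 w=cw$ generating a rank-$1$ sub-$q$-difference module isomorphic to $I_{0,c}$, and that the quotient $M/\widetilde{k}w$ is again a $q$-difference module. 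Concretely, I would look for $w\in M$ of the form $w=(\Phi^{r-1}+b_1\Phi^{r-2}+\cdots+b_{r-1})v$ for suitable $b_i\in\mathbb{C}[[x]]$, with $\Phi(w)=cw$; expanding this condition $\Phi(w)-cw=0$ and using $P(\Phi)v=0$ to reduce $\Phi^r v$ gives a recursive system of equations for the coefficients of the $b_i$.

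More precisely, here is how I would carry it out. First, I would solve the equation $\Phi w = c w$ for the unknown right factor $R(\Phi)=\Phi^{r-1}+b_1\Phi^{r-2}+\cdots+b_{r-1}$, where $w=R(\Phi)v$. We have $\Phi R(\Phi) = \Phi^{r}+\tau(b_1)\Phi^{r-1}+\cdots+\tau(b_{r-1})\Phi$; substituting $\Phi^r = -(a_1\Phi^{r-1}+\cdots+a_r)$ (valid modulo the relation $P(\Phi)v=0$, but in fact we want this as an identity in $k\langle\Phi\rangle$ after right-dividing) and setting the result equal to $c\,R(\Phi)=c\Phi^{r-1}+cb_1\Phi^{r-2}+\cdots+cb_{r-1}$, we get $r$ equations by comparing coefficients of $\Phi^{r-1},\dots,\Phi,\Phi^0$. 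The coefficient of $\Phi^0$ reads $\tau(b_{r-1})\cdot 0 - (a_r\cdot1)$? — one must be careful: since $R$ has no constant term, $\Phi R$ has no $\Phi^0$ term either, so reducing $\Phi^r v$ introduces the $a_r$-term; the $\Phi^0$ equation is then $-a_r = 0$ unless we absorb it — this is where the left division by $\tau(b_0)^{-1}$ and the extra unit $b_0$ enter. So more carefully I would posit $P(\Phi) = u\cdot(\Phi-c)\cdot R(\Phi)$ with $u=\tau(b_0)^{-1}$, $R(\Phi)=b_0\Phi^{r-1}+b_1\Phi^{r-2}+\cdots+b_{r-1}$, expand the right-hand side fully in $k\langle\Phi\rangle$, and match all $r+1$ coefficients of $\Phi^r,\dots,\Phi^0$. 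Matching $\Phi^r$ forces $\tau(b_0)^{-1}\tau(b_0)=1$ automatically; the remaining $r$ equations determine $b_0,\dots,b_{r-1}$ recursively.

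The recursion works as follows: comparing constant terms in the power-series expansion (the ``$x=0$'' reduction), the system becomes, for $b_i = \beta_i + x(\cdots)$ with $\beta_i\in\mathbb{C}$, essentially the statement that $T^{r}+a_1(0)T^{r-1}+\cdots+a_r(0) = (T-c)(\beta_0 T^{r-1}+\cdots+\beta_{r-1})$ over $\mathbb{C}$, i.e.\ $c$ is a root of the reduced polynomial $\overline P(T)=P(T)\bmod x$ and $R(T)\bmod x = \overline P(T)/(T-c)$; solvability at this level is exactly the classical polynomial division, and $\beta_0 = 1$ (so $b_0$ is a unit). Then at each higher order in $x$, the recursion for the next coefficient of each $b_i$ is linear with a nonsingular ``leading'' operator governed by the same factorization $\overline P(T) = (T-c)\overline R(T)$ together with the twisting by $\tau$ (which acts on $x^n$ by $q^{n/s}$, hence invertibly, with no root-of-unity obstruction since $q$ is not a root of unity) — so one solves successively for the coefficients of $b_0,\dots,b_{r-1}$, exactly as in the rank-$1$ case treated just above. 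I expect the main obstacle to be bookkeeping: carefully writing out the noncommutative expansion of $\tau(b_0)^{-1}(\Phi-c)(b_0\Phi^{r-1}+\cdots+b_{r-1})$, isolating which coefficient equation determines which power-series coefficient of which $b_i$, and verifying that the relevant linear map at each stage is invertible (this is where ``$c$ is a unit root'' and ``$q$ not a root of unity'' both get used). Once the formal factorization in $k\langle\Phi\rangle$ is established, the lemma follows, and $b_0$ being a unit is automatic from $\beta_0=1$.
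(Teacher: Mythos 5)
Your overall strategy is the paper's: posit the factorization with undetermined $b_{0},\dots,b_{r-1}$, match coefficients of $\Phi^{r-1},\dots,\Phi^{0}$ to get the system $\tau(b_{j})-cb_{j-1}=\tau(b_{0})a_{j}$ ($b_{r}=0$), observe that modulo $x$ this is ordinary division of $\overline{P}(T)=P(T)\bmod x$ by $T-c$ for $c$ a nonzero root of $\overline{P}$ (such a root existing precisely because some $a_{i}$, $i\ge1$, is a unit), and then solve order by order in $x$. The paper organizes this slightly differently (it solves the first $r-1$ equations in closed form and reduces everything to one functional equation for $v=\tau(b_{0})$), but that is cosmetic.

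The genuine gap is in your final step, where you assert the linear map at each order is invertible ``where `$c$ is a unit root' and `$q$ not a root of unity' both get used,'' with the parenthetical suggesting the point is that $\tau$ acts invertibly on $x^{n}$. That is not the issue. If you eliminate $b_{1,n},\dots,b_{r-1,n}$ from the order-$x^{n}$ system, the remaining equation for the $x^{n}$-coefficient of $b_{0}$ has coefficient equal to $q^{n}\,\overline{P}(q^{-n}c)$; so the solvability condition at order $n\ge1$ is $\overline{P}(q^{-n}c)\ne0$, and this genuinely fails for a bad choice of root. For instance, if $\overline{P}(T)=(T-1)(T-q^{-1})$ and you take $c=1$, the recursion is obstructed at $n=1$. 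The missing idea is the correct selection of $c$: among the elements $c,q^{-1}c,q^{-2}c,\dots$ that are roots of $\overline{P}$, take the \emph{last} one, so that $q^{-n}c$ is never again a root for $n\ge1$. This is where ``$q$ is not a root of unity'' is actually used --- it makes the sequence $q^{-n}c$ injective, so only finitely many of its terms can be roots of $\overline{P}$ and a last one exists. With that choice inserted, your argument closes and coincides with the paper's proof.
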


\begin{proof}
We write $a_{0}=1,$ $u=b_{0}$ and solve successively for the coefficient
of $\Phi^{r-i}$. Define
\[
b_{i}=\sum_{j=0}^{i}c^{j}\tau^{-j}(u)\tau^{-j-1}(a_{i-j})
\]
($0\le i\le r-1$), where $u\in\mathbb{C}[[x]]^{\times}$ and $c\in\mathbb{C}^{\times}$
are still to be determined. These equations guarantee that
\[
\tau(u)^{-1}(\tau(b_{i+1})-cb_{i})=a_{i+1}
\]
for $0\le i\le r-2$. To get the last coefficient we need
\[
-cb_{r-1}=\tau(u)a_{r}
\]
or, with $\tau(u)=v$, all that remains is to find $v\in\mathbb{C}[[x]]^{\times}$
and $c\in\mathbb{C}^{\times}$ such that
\[
\sum_{j=0}^{r}c^{j}\tau^{-j}(v)\tau^{-j}(a_{r-j})=0.
\]
Write $v=1+t_{1}x+t_{2}x^{2}+\cdots$ and $\tau^{-j}(a_{r-j})=a_{j,0}+a_{j,1}x+a_{j,2}x^{2}+\cdots.$
For $c$ we take a non-zero solution of
\[
\sum_{j=0}^{r}c^{j}a_{j,0}=0.
\]
Here we use the fact that since the smallest slope is 0, there is
$j<r$ with $a_{j,0}\ne0,$ and of course $a_{r,0}=1.$ We also insist
that for $i\ge1$ $q^{-i}c$ is not a root of the same polynomial.
This can be achieved because $q$ is not a root of unity, so we may
replace $c$ by the last element in the sequence $c,q^{-1}c,q^{-2}c,\dots$
solving the equation. 

We then solve successively for the $t_{i}$. We get
\[
t_{i}\left(\sum_{j=0}^{r}c^{j}q^{-ji}a_{j,0}\right)=r_{i}
\]
where $r_{i}$ is an expression involving the $a_{j,m}$, $c$ and
$t_{\ell}$ for $\ell<i$. By our assumption on $c$ the term in paranthesis
does not vanish, so we can solve for $t_{i}$.
\end{proof}
\begin{cor}
Assume that, after replacing $x$ by $x^{1/s}$ for some $s,$ the
smallest slope of $P$ is an integer $\mu.$ Then
\[
P(\Phi)=\tau(u)^{-1}(\Phi-cx^{\mu})uP_{1}(\Phi)
\]
where $P_{1}$ is a monic polynomial of degree $r-1$ and $u\in\mathbb{C}[[x]]^{\times}.$
\end{cor}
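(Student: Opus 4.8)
The plan is to reduce the corollary to Lemma \ref{lem:factorization-1} by a preliminary change of variable that normalizes the smallest slope to $0$. First I would twist the module, or equivalently the polynomial: replacing $M$ by $M\otimes I_{-\mu,1}$ and the cyclic vector $v$ by $v\otimes e$, the polynomial $P$ gets replaced (up to a nonzero scalar) by a monic polynomial $\widetilde{P}$ whose coefficients are obtained from the $a_i$ by multiplying $a_i$ by $q^{\text{(const)}}x^{-i\mu}$ and rescaling to make it monic — exactly as computed in the ``Newton polygons'' subsection. The effect on the Newton polygon is the vertical shear $(i,\nu(a_i))\mapsto(i,\nu(a_i)-i\mu)$, so $\mathcal{N}_{\widetilde{P}}$ has smallest slope $0$, and since the slopes were integral (after the passage to $x^{1/s}$) all coefficients of $\widetilde{P}$ lie in $\mathbb{C}[[x]]$ with at least one a unit.

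Next I would apply Lemma \ref{lem:factorization-1} to $\widetilde{P}(\Phi)$, obtaining a factorization
\[
\widetilde{P}(\Phi)=\tau(b_0)^{-1}(\Phi-c)\bigl(b_0\Phi^{r-1}+b_1\Phi^{r-2}+\cdots+b_{r-1}\bigr)
\]
with $b_0\in\mathbb{C}[[x]]^{\times}$, $b_i\in\mathbb{C}[[x]]$, $c\in\mathbb{C}^{\times}$. Then I would untwist: conjugating $\Phi$ by the scalar $x^{\mu}$ (i.e. translating back through the isomorphism $M\otimes I_{-\mu,1}\otimes I_{\mu,1}\simeq M$), each occurrence of $\Phi$ becomes, up to a power of $x$ and of $q^{\mu}$, a copy of $x^{\mu}\Phi$, so the left factor $\Phi-c$ turns into $\Phi-cx^{\mu}$ after absorbing the appropriate powers of $q^{\mu}$ into the scalar $c$, and the remaining degree-$(r-1)$ factor remains of the same shape with coefficients in $\mathbb{C}[[x]]$ after pulling out the leading unit. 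Setting $u=b_0$ (adjusted by the relevant unit) and collecting the degree-$(r-1)$ factor into $uP_1(\Phi)$ with $P_1$ monic of degree $r-1$ gives the claimed identity
\[
P(\Phi)=\tau(u)^{-1}(\Phi-cx^{\mu})\,uP_1(\Phi).
\]

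The main obstacle is bookkeeping the twist precisely: one must track how the substitution $\Phi\leftrightarrow x^{\mu}\Phi$ interacts with the noncommutativity relation $\Phi a=\tau(a)\Phi$, so that the powers of $x$ and the factors $q^{\mu\binom{i}{2}}$ introduced by the twist are correctly redistributed among $\tau(u)^{-1}$, $c$, $u$ and the coefficients of $P_1$; in particular one checks that $\Phi-c$ really does become $\Phi-cx^{\mu}$ and not, say, $x^{\mu}\Phi-c$, and that the leading coefficient of the long factor can be normalized to make $P_1$ monic while keeping $u\in\mathbb{C}[[x]]^{\times}$. This is entirely routine once the conventions from the Newton polygon discussion are in place, but it is the only step with any content; everything else is a direct appeal to Lemma \ref{lem:factorization-1}.
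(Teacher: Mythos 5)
Your proposal follows exactly the paper's own route: twist by $I_{-\mu,1}$ so that the minimal polynomial of $v\otimes e$ has smallest slope $0$, apply Lemma \ref{lem:factorization-1}, and then untwist via the ring automorphism of $k\left\langle \Phi\right\rangle$ sending $\Phi$ to $x^{-\mu}\Phi$, absorbing the resulting powers of $q^{\mu}$ into $c$ (which is in any case only determined up to a power of $q$). The bookkeeping you defer as routine is precisely what the paper's proof writes out, so the two arguments coincide.
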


\begin{proof}
Let $\mu$ be the smallest slope of $P$ and consider the module $M\otimes I_{-\mu,1}$
with the cyclic vector $v\otimes e.$ Since
\[
(x^{\mu}\Phi)^{i}(v\otimes e)=\Phi^{i}v\otimes e
\]
we deduce that if $Q(T)=\sum_{i=0}^{r}a_{i}T^{i}$ is the monic minimal
polynomial of $v\otimes e$ then
\[
Q(\Phi)=q^{-\mu\binom{r}{2}}x^{-\mu r}P(x^{\mu}\Phi)
\]
(caution: it is not true that $Q(T)=q^{-\mu\binom{r}{2}}x^{-\mu r}P(x^{\mu}T)$;
the variable $T$ commutes with $x^{\mu}$ while $\Phi$ does not!).
The polynomial $Q$ has smallest slope 0, so by the Lemma
\[
q^{-\mu\binom{r}{2}}x^{-\mu r}P(x^{\mu}\Phi)=\tau(b_{0})^{-1}(\Phi-c)Q_{1}(\Phi)
\]
where $Q_{1}(\Phi)=\sum_{i=0}^{r-1}b_{i}\Phi^{i}$. Consider the automorphism
of the non-commutative ring $k\left\langle \Phi\right\rangle $ carrying
$\Phi$ to $x^{-\mu}\Phi$ and leaving $k$ fixed. (Note that it is
\emph{not }obtained by substituting $\Phi$ in a similar automorphism
of $k[T].$) Applying it to the above identity we get
\[
P(\Phi)=q^{\mu\binom{r}{2}}x^{\mu r}\tau(b_{0})^{-1}(x^{-\mu}\Phi-c)Q_{1}(x^{-\mu}\Phi)
\]
\[
=\tau(b_{0})^{-1}q^{\mu\binom{r}{2}-\mu(r-1)}(\Phi-cq^{\mu(r-1)}x^{\mu})x^{\mu(r-1)}Q_{1}(x^{-\mu}\Phi)
\]
\[
=\tau(b_{0})^{-1}(\Phi-cq^{\mu(r-1)}x^{\mu})q^{\mu\binom{r-1}{2}}x^{\mu(r-1)}Q_{1}(x^{-\mu}\Phi)
\]
\[
=\tau(b_{0})^{-1}(\Phi-cq^{\mu(r-1)}x^{\mu})P_{1}(\Phi)
\]
where the leading coefficient of $P_{1}$ is $b_{0}.$ The claim follows,
with $u=b_{0}$ and $c$ replaced by $cq^{\mu(r-1)}.$ Note that $c$
is anyhow only determined by $M$ up to a power of $q$, since $I_{\lambda,c}\simeq I_{\lambda,qc}$.
\end{proof}
Consider the vector $e_{1}=uP_{1}(\Phi)v\ne0.$ Then
\[
\Phi e_{1}=cx^{\mu}e_{1}
\]
so $\widetilde{k}e_{1}\simeq I_{\mu,c}.$

It is easy to see that the slopes of $P$ are the slopes of $P_{1}$
and $\mu$ (with multiplicities).

\subsubsection{The structure theorem for a $q$-difference module over $\widetilde{k}$}
\begin{prop}
\label{prop:filtration_q_difference}Let $M$ be an arbitrary $q$-difference
module over $\widetilde{k}$. Then $M$ has an ascending filtration
with one-dimensional graded pieces of the form $I_{\mu_{i},c_{i}}$
with rational slopes $\mu_{1}\le\mu_{2}\le\cdots\le\mu_{r}.$
\end{prop}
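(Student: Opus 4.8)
The plan is to induct on $r=\dim_{\widetilde k}M$, splitting off a rank-one sub-difference-module from the bottom at each stage and then reordering the resulting filtration so the slopes increase. The base case $r=1$ is the classification of rank-one modules above.

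For $r>1$ I would first produce \emph{some} rank-one submodule. Pick any $v\neq 0$ in $M$ and let $N=\widetilde k\langle\Phi,\Phi^{-1}\rangle v\subseteq M$ be the cyclic submodule it generates, with monic minimal polynomial $P$ as in $(\ref{eq:min.pol.})$, of degree $\dim_{\widetilde k}N$. After replacing $x$ by $x^{1/s}$ for a common denominator $s$ of the slopes of $\mathcal N_P$, the smallest slope of $P$ becomes an integer $\mu$, so the Corollary applies and factors $P(\Phi)=\tau(u)^{-1}(\Phi-cx^\mu)uP_1(\Phi)$. Then $e_1:=uP_1(\Phi)v$ is nonzero (otherwise, left-multiplying by the unit $u^{-1}$, the vector $v$ would satisfy a monic relation of degree $\deg P-1$) and $\Phi e_1=cx^\mu e_1$, so $L:=\widetilde k e_1$ is a rank-one submodule of $M$ isomorphic to $I_{\mu,c}$. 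The quotient $M/L$ is a $q$-difference module of rank $r-1$, so by the inductive hypothesis it carries an ascending filtration with graded pieces $I_{\nu_j,d_j}$ and $\nu_1\le\cdots\le\nu_{r-1}$; pulling this back along $M\twoheadrightarrow M/L$ and prepending $0\subset L$ gives $M$ an ascending filtration with one-dimensional graded pieces, of slopes $\mu,\nu_1,\dots,\nu_{r-1}$ — already non-decreasing apart from the first term.

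It remains to reorder, and for this I would prove a swap lemma: any ascending filtration of a $q$-difference module with one-dimensional graded pieces can be modified to have non-decreasing slopes. By induction on the number of inverted pairs (each adjacent swap removes one) it is enough to treat $M_{i-1}\subset M_i\subset M_{i+1}$ with $M_i/M_{i-1}\cong I_{\mu_i,c_i}$, $M_{i+1}/M_i\cong I_{\mu_{i+1},c_{i+1}}$ and $\mu_i>\mu_{i+1}$. In the rank-two module $E:=M_{i+1}/M_{i-1}$ pick a basis $f_1$ spanning the sub-line and $f_2$ lifting the quotient, so that $\Phi f_1=c_i x^{\mu_i}f_1$ and $\Phi f_2=c_{i+1}x^{\mu_{i+1}}f_2+bf_1$ for some $b\in\widetilde k$. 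A vector $w=f_2+tf_1$ spans a sub-line isomorphic to $I_{\mu_{i+1},c_{i+1}}$ precisely when $c_{i+1}x^{\mu_{i+1}}t-c_i x^{\mu_i}\tau(t)=b$; comparing coefficients in $\mathbb C((x^{1/s}))$, for $s$ large enough that $b$ lies there and $\mu_i,\mu_{i+1}\in\frac1s\mathbb Z$, turns this into a recursion expressing the coefficient $t_j$ in terms of $t_{j-e}$ with $e=s(\mu_i-\mu_{i+1})>0$, which is uniquely solvable with only finitely many negative-index terms once one sets $t_j=0$ for $j\ll 0$ (here it suffices that $c_{i+1}\neq0$). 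The preimage $M_i'$ of $\widetilde k w$ in $M_{i+1}$ then satisfies $M_{i-1}\subset M_i'\subset M_{i+1}$ with graded slopes $\mu_{i+1}$ then $\mu_i$, so replacing $M_i$ by $M_i'$ performs the exchange and strictly lowers the number of inverted pairs.

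I expect the swap lemma — and within it the observation that the displayed linear equation for $t$ is solvable in $\widetilde k$ exactly when the larger slope sits \emph{below} the smaller one — to be the one genuinely new point; everything else is bookkeeping around the Corollary and the rank-one classification. The failure of solvability in the reverse direction is the analytic reflection of the fact that such filtrations need not split, cf.\ Remark \ref{rem:Resonants}.
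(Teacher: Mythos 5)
Your proof is correct. The first half — splitting off a rank-one submodule by passing to the cyclic submodule generated by $v$, applying the factorization Corollary to its minimal polynomial, and inducting on $M/L$ — is exactly the paper's argument, including the observation that $e_1=uP_1(\Phi)v\neq 0$ by minimality of $P$. Where you diverge is on the ordering of the slopes. The paper's one-line proof does not address the ordering at all; in the paper it emerges only later, in the proof of the Structure Theorem, from the fact that (once Birkhoff's cyclicity lemma is available) the iterated factorization of the minimal polynomial of a cyclic vector for $M$ itself always peels off the \emph{smallest} remaining slope, so the resulting flag automatically has non-decreasing slopes. You instead prove a swap lemma: an extension of $I_{\mu_{i+1},c_{i+1}}$ by $I_{\mu_i,c_i}$ with $\mu_i>\mu_{i+1}$ contains a sub-line of the smaller slope, because the equation $c_{i+1}x^{\mu_{i+1}}t-c_ix^{\mu_i}\tau(t)=b$ is solvable in $\widetilde k$ precisely when the recursion on coefficients runs upward, i.e. when $\mu_i>\mu_{i+1}$; then bubble-sort. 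This is the standard slope-filtration mechanism (Dieudonn\'e--Manin style), and it buys you something real: your induction on $M/L$ produces the slope $\mu$ of $L$ as the smallest slope of the cyclic submodule $\widetilde k\langle\Phi,\Phi^{-1}\rangle v$, not necessarily of $M$, so the filtration you first obtain genuinely need not be ordered, and the swap lemma is needed to finish — whereas the paper's terse "the claim follows from what was done above" quietly defers the ordering to the later cyclic-vector argument. Your identification of the one-directional solvability as the reflection of non-split extensions is also apt.
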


\begin{proof}
It is enough to prove that $M$ contains a rank 1 submodule $M_{1}$,
because then we continue by induction on $M/M_{1}.$ For that we may
assume that $M$ is cyclic, and the claim follows from what was done
above.
\end{proof}
Since the Jordan-Hölder factors of $M$ are intrinsic to $M$ we deduce
that if $M$ is cyclic the slopes are independent of the cyclic vector
used in the proof.
\begin{cor}[Birkhoff's cyclic vector lemma ]
 Every $q$-difference module over $\widetilde{k}$ has a cyclic
vector. 
\end{cor}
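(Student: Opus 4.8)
The plan is to produce an \emph{explicit} cyclic vector directly from the slope filtration of Proposition~\ref{prop:filtration_q_difference}; no further induction is needed. Fix such a filtration $0=M_{0}\subset M_{1}\subset\cdots\subset M_{r}=M$ with $M_{i}/M_{i-1}\simeq I_{\mu_{i},c_{i}}$, and choose a basis $\epsilon_{1},\dots,\epsilon_{r}$ of $M$ over $\widetilde{k}$ with $\epsilon_{i}\in M_{i}$ mapping to a generator of $M_{i}/M_{i-1}$. In this basis the matrix $B=(B_{ij})$ determined by $\Phi\epsilon_{j}=\sum_{i}B_{ij}\epsilon_{i}$ is upper triangular with diagonal $B_{ii}=c_{i}x^{\mu_{i}}$, so $\nu(B_{ii})=\mu_{i}$. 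Writing $B_{(k)}=B\,\tau(B)\cdots\tau^{k-1}(B)$, the coordinate vector of $\Phi^{k}v$ for $v=\sum_{j}g_{j}\epsilon_{j}$ equals $B_{(k)}\,\tau^{k}(\underline{g})$, and $v$ is cyclic as soon as the matrix $W$ whose $k$-th column ($0\le k\le r-1$) is this vector is invertible. I will take $g_{j}=x^{\rho_{j}}$ for suitable rational numbers $\rho_{1}<\rho_{2}<\cdots<\rho_{r}$.

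Since $\tau$ preserves the valuation (it scales $x^{1/s}$ by the unit $q^{1/s}$), the $(i,k)$-entry of $W$ is $\sum_{j\ge i}(B_{(k)})_{ij}\,q^{k\rho_{j}}x^{\rho_{j}}$, whose $j=i$ summand is $c_{i}^{k}q^{\mu_{i}\binom{k}{2}+k\rho_{i}}x^{k\mu_{i}+\rho_{i}}$, of valuation $k\mu_{i}+\rho_{i}$, while each $j>i$ summand has valuation at least $\rho_{j}-C$ for a constant $C$ depending only on the finitely many entries $(B_{(k)})_{ij}$. Hence, once the gaps $\rho_{i+1}-\rho_{i}$ are chosen large enough, the diagonal summand strictly dominates every entry, so $\nu(W_{ik})=k\mu_{i}+\rho_{i}$ with leading coefficient the nonzero constant $c_{i}^{k}q^{\mu_{i}\binom{k}{2}+k\rho_{i}}$. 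Reading off the lowest-valuation term of $\det W$ from the Leibniz expansion, the valuation-minimizing permutations are exactly those that permute indices only within maximal blocks of equal slope, and the corresponding leading coefficient factors, block by block, into Vandermonde determinants in the quantities $c_{i}q^{\rho_{i}}$. It therefore suffices to also choose the $\rho_{i}$ so that the $c_{i}q^{\rho_{i}}$ are pairwise distinct, which is possible over $\mathbb{Q}$ precisely because $q$ is not a root of unity (each pair $i\ne j$ excludes only one value of $\rho_{i}-\rho_{j}$). For such a choice $\det W\ne 0$, so $v$ is a cyclic vector.

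There is no deep obstacle here; the delicate points are bookkeeping. One is to check, using that $\tau$ fixes $\nu$, that $\nu\big((B_{(k)})_{ii}\big)=k\mu_{i}$ and that its leading coefficient is the product of the leading coefficients of the $\tau^{\ell}(B_{ii})$, which yields the claimed $c_{i}^{k}q^{\mu_{i}\binom{k}{2}}$. Another is to verify that ``gaps large enough'' and ``the $c_{i}q^{\rho_{i}}$ pairwise distinct'' can be imposed simultaneously, which is clear since the first is finitely many inequalities and the second forbids only countably many differences of rationals. The only genuinely non-formal step is to identify correctly, when some slopes coincide, the set of valuation-minimizing permutations, so that the leading coefficient of $\det W$ comes out as a product of Vandermonde determinants (nonzero by the distinctness of the $c_{i}q^{\rho_{i}}$) rather than as a single monomial with possible cancellation. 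The substantive input is the existence of a full flag of $\Gamma$-submodules with one-dimensional quotients, i.e. Proposition~\ref{prop:filtration_q_difference}; once that is in hand, the cyclic vector lemma follows as above.
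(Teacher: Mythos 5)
Your argument is correct, but it takes a genuinely different route from the paper's. The paper proves the lemma by induction on the rank: it takes a rank-$1$ submodule $M_{1}=\widetilde{k}e$, lifts a cyclic vector of $M/M_{1}$ to some $v$ with annihilator $P(\Phi)$, arranges $P(\Phi)(x^{\lambda}e)\ne 0$ for a suitable $\lambda$, and checks in two lines that $u=v+x^{\lambda}e$ is cyclic. You instead use the \emph{full} flag of Proposition~\ref{prop:filtration_q_difference} to write $\Phi$ as an upper triangular matrix with diagonal $c_{i}x^{\mu_{i}}$ and exhibit an explicit cyclic vector $v=\sum_{j}x^{\rho_{j}}\epsilon_{j}$, proving $\det W\ne 0$ for the ``difference Wronskian'' $W$ by a leading-term (Newton polygon) analysis: diagonal dominance of each entry for widely spaced $\rho_{j}$, identification of the valuation-minimizing permutations as those permuting within blocks of equal slope, and non-vanishing of the resulting block Vandermonde determinants in the $c_{i}q^{\rho_{i}}$, which uses that $q$ is not a root of unity. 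I checked the computation of $(B_{(k)})_{ii}$ and the block factorization; they are right. Both proofs rest on the same substantive input (the existence of rank-$1$ submodules, hence the flag); the paper's is shorter and softer, while yours buys an explicit, essentially generic cyclic vector (almost every choice of exponents $\rho_{j}$ works) at the price of the combinatorial bookkeeping you yourself flag as the delicate point. One cosmetic remark: to get the diagonal entry exactly equal to $c_{i}x^{\mu_{i}}$ you should choose $\epsilon_{i}$ mapping to the standard generator of $I_{\mu_{i},c_{i}}$ under the given isomorphism; with an arbitrary generator the leading coefficient of $B_{ii}$ is some other nonzero constant of valuation $\mu_{i}$, but your argument is insensitive to this.
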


\begin{proof}
We prove the corollary by induction on the rank, the rank 1 case being
obvious. Let $M_{1}\subset M$ be a submodule of rank $1$ and $v\in M$
a vector projecting to a cyclic vector of $M/M_{1}.$ Let $e$ be
a basis element of $M_{1}.$ Let $P(\Phi)$ be a polynomial in $\Phi$
with coefficients in $\widetilde{k}$ annihilating $v$. For an appropriate
$\lambda$, $P(\Phi)(x^{\lambda}e)\ne0.$ Replacing $e$ by $x^{\lambda}e$
we may assume that $P(\Phi)e\ne0.$ But then $u=v+e$ is a cyclic
vector for $M,$ as the module generated by it contains $P(\Phi)u=P(\Phi)e,$
hence $M_{1},$ and modulo $M_{1}$ it contains the image of $v,$
hence projects onto $M/M_{1}.$
\end{proof}
\begin{thm}[Structure theorem for formal $q$-difference modules]
\label{thm:Structure}  Let $M$ be a $q$-difference module over
$\widetilde{k}.$ Let $\lambda_{1}<\lambda_{2}<\cdots<\lambda_{m}$
be the distinct slopes of $M$ in increasing order. Then there are
$\mathbb{C}$-vector spaces $N_{i}\subset M$ with endomorphisms $\phi_{i}\in GL_{\mathbb{C}}(N_{i})$
so that
\[
M=\bigoplus_{i=1}^{m}\widetilde{k}\otimes_{\mathbb{C}}N_{i}
\]
and $\Phi(1\otimes v_{i})=x^{\lambda_{i}}\otimes\phi_{i}(v_{i})$
for $v_{i}\in N_{i}.$ If $M$ is defined over $k$ then the same
is true if we extend scalars to $k_{s}$ where $s$ is the least common
denominator of the $\lambda_{i}$.
\end{thm}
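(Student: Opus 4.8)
The plan is to organize the module according to its slopes, which we have already shown (Proposition~\ref{prop:filtration_q_difference}) appear as the one-dimensional subquotients $I_{\mu_i,c_i}$ of an ascending filtration. First I would pass to $k_s$, where $s$ is the least common denominator of the slopes, so that all slopes become integers; by the remark following the definition of Newton polygons this is harmless, and over $k_s$ we may twist by a suitable $I_{\lambda,1}$ to normalize. The key structural input is a \emph{slope filtration}: I claim $M$ has a canonical filtration $0=F_0\subset F_1\subset\cdots\subset F_m=M$ by $q$-difference submodules such that $F_i/F_{i-1}$ is isotypic of slope $\lambda_i$ (all its Jordan--H\"older factors are $I_{\lambda_i,\bullet}$). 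This follows by pushing the one-step factorization (the Corollary to Lemma~\ref{lem:factorization-1}) through the induction: at each stage the minimal-slope quotient $I_{\mu,c}$ can be split off, and collecting all pieces of the least slope $\lambda_1$ produces $F_1$; one then iterates on $M/F_1$. A clean way to get that $F_1$ is the \emph{full} $\lambda_1$-isotypic part is to observe that $\mathrm{Hom}(I_{\lambda,c},I_{\lambda',c'})=0$ when $\lambda\ne\lambda'$ (a morphism would force a valuation identity incompatible with $\lambda\ne\lambda'$, exactly as in the Newton-polygon discussion), so extensions between pieces of different slopes are ``triangular'' in only one direction after reindexing.

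The second and main step is to \emph{split} the slope filtration, i.e. to show each graded piece $F_i/F_{i-1}$, say of slope $\lambda$, is actually a direct summand, and moreover is of the asserted form $\widetilde{k}\otimes_{\mathbb{C}}N_i$ with $\Phi(1\otimes v)=x^{\lambda}\otimes\phi_i(v)$. For the ``of the asserted form'' part, twist by $I_{-\lambda,1}$ to reduce to slope $0$: an isotypic slope-$0$ module is an iterated extension of $I_{0,c_j}$'s, hence (choosing a basis adapted to the filtration) is represented by an upper-triangular constant-on-the-diagonal matrix $A_\Phi\in GL_r(\mathbb{C}[[x]])$ that is congruent mod $x$ to a constant matrix; a successive-approximation gauge transformation (solving order by order in $x$, using that $q$ is not a root of unity so that the relevant linear maps $X\mapsto A_0^{-1}X - q^{?}\,(\text{shift})$ are invertible — the same mechanism as in Lemma~\ref{lem:factorization-1}) conjugates $A_\Phi$ to its constant term. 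Twisting back gives $\Phi = x^\lambda\cdot(\text{constant})$, which is precisely $\widetilde{k}\otimes_{\mathbb{C}}N_i$ with $\phi_i$ that constant matrix. To split the filtration \emph{between} slopes: an extension of a slope-$\lambda'$ piece by a slope-$\lambda$ piece with $\lambda<\lambda'$ is classified by an $H^1$ computed from a cocycle $B\in\mathrm{Mat}(\widetilde{k})$; writing $B$ in terms of the normalized forms $x^\lambda\Phi_1$, $x^{\lambda'}\Phi_2$, the coboundary equation becomes $\Phi_1 X - X \Phi_2 = B x^{-\lambda}\cdots$, and because the valuations are pushed apart in the correct direction this can be solved coefficientwise in $\widetilde{k}$ (again the relevant operators are invertible since $q$ is not a root of unity and the slopes differ). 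Doing this for each adjacent pair and assembling, the filtration splits and we obtain $M=\bigoplus_i \widetilde{k}\otimes_{\mathbb{C}}N_i$ as claimed.

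I expect the main obstacle to be the splitting step, and within it the solvability of the two coboundary equations — both the intra-slope gauge reduction to a constant matrix and the inter-slope vanishing of $H^1$. These are where the hypothesis that $q$ is not a root of unity is essential (it guarantees $q^n\ne 1$, hence invertibility of the maps ``$v_n\mapsto A_0^{-1}v_n - q^n v_n$'' and their block analogues that arise when comparing coefficients of powers of $x$ or $x^{1/s}$). The bookkeeping over $\widetilde{k}$ versus $k_s$ requires a little care: one must check the successive-approximation procedure stays within $\mathbb{C}[[x^{1/s}]]$, which it does because each equation is triangular and has coefficients already in that ring. The final sentence of the theorem — that for $M$ defined over $k$ one only needs $k_s$ — is then immediate, since the entire construction took place over $k_s$ once we cleared denominators of the slopes.
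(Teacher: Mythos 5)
Your architecture (slope filtration, then splitting between slopes via an Ext computation, then descending each isotypic piece to $\mathbb{C}$) is a genuinely different, more categorical organization than the paper's, which instead takes a cyclic vector, factors the minimal polynomial completely into linear factors $u_{i}(\Phi-c_{i+1}x^{\mu_{i+1}})$ via Lemma \ref{lem:factorization-1} and its Corollary to produce a bidiagonal matrix, and then performs a single inductive gauge step that handles the inter-slope and intra-slope cases simultaneously. Your inter-slope splitting is sound: the coboundary equation $\Phi_{1}q^{m-\lambda}H_{m-\lambda}-H_{m-\lambda'}\Phi_{2}=-B_{m}$ couples \emph{distinct} coefficients of $H$ when $\lambda\ne\lambda'$, so it is a terminating recursion (starting from $0$ below the support of $B$) and needs only $q\ne0$, not the root-of-unity hypothesis.

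There is, however, a genuine gap in your intra-slope step. After twisting to slope $0$, the diagonal constants $c_{1},\dots,c_{r}$ of the upper-triangular matrix are \emph{given}, not chosen, and the coefficientwise equation you must solve to remove an off-diagonal entry is $(c_{n}-c_{i}q^{m})h_{m}=f_{m}$. Your claim that ``the relevant linear maps are invertible since $q$ is not a root of unity'' fails precisely when $c_{n}/c_{i}=q^{m}$ for some $m\ne0$ (a \emph{resonance}): the map is then not invertible at that coefficient and the term $f_{m}x^{m}$ cannot be killed, so your successive approximation stalls and does not produce a constant matrix. (Concretely, $\Phi e_{1}=e_{1}$, $\Phi e_{2}=xe_{1}+qe_{2}$ cannot be gauged to its constant term $\mathrm{diag}(1,q)$; it descends to $\mathbb{C}$ only after replacing $e_{2}$ by $x^{-1}e_{2}$, which changes the diagonal to $(1,1)$ and yields a unipotent $\phi$.) The missing idea is the normalization the paper performs before its induction: whenever $\mu_{i}=\mu_{j}$ and $c_{i}/c_{j}\in q^{\mathbb{Z}}$ with $c_{i}\ne c_{j}$, replace $e_{j}$ by $x^{n}e_{j}$ so that $c_{j}$ becomes $c_{i}$; after this only the cases ``$c_{i}q^{m}\ne c_{n}$ for all $m$'' and ``$c_{i}=c_{n}$, obstruction absorbed into the constant $a_{in}$'' occur, and the recursion goes through. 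The same caveat applies to your unexamined assertion that the matrix can be taken in $GL_{r}(\mathbb{C}[[x]])$: clearing polar parts of the off-diagonal entries uses the same equation and meets the same resonant obstructions. With this normalization inserted, your proof closes.
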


\begin{proof}
We may assume that $M$ is defined over $k$ and that all the slopes
are integral. If this is not the case, simply replace the variable
$x$ by $x^{1/s}$. In view of the last corollary we may assume that
$M$ is generated by a cyclic vector $v$ and we let $P(T)$ be the
unique monic polynomial of degree $r=rk(M)$ such that $P(\Phi)v=0.$

We shall prove the theorem in two stages. First, we show that there
exists a basis of $M$ with respect to which $\Phi$ is represented
by a matrix $A=(a_{ij})\in GL_{r}(k)$ where $a_{ii}=c_{i}x^{\mu_{i}}$
($c_{i}\in\mathbb{C}^{\times}$, $\mu_{1}\le\mu_{2}\le\cdots\le\mu_{r}$
are the $\lambda_{i}$ with multiplicities), and $a_{ij}=0$ unless
$j=i$ or $j=i+1$. In particular, $A$ is upper triangular.

Indeed, using Lemma \ref{lem:factorization-1} and its Corollary repeatedly
we may write
\[
P(\Phi)=u_{0}(\Phi-c_{1}x^{\mu_{1}})u_{1}(\Phi-c_{2}x^{\mu_{2}})u_{2}\cdots u_{r-1}(\Phi-c_{r}x^{\mu_{r}})u_{r}
\]
with $u_{i}\in\mathbb{C}[[x]]^{\times}.$ Let
\[
e_{i}=\left\{ u_{i}(\Phi-c_{i+1}x^{\mu_{i+1}})u_{i+1}\cdots u_{r-1}(\Phi-c_{r}x^{\mu_{r}})u_{r}\right\} v.
\]
Since $v,\Phi v,\dots,\Phi^{r-1}v$ is a $k$-basis of $M,$ so is
$e_{1},e_{2},\dots,e_{r}.$ For $1\le i\le r$ 
\[
(\Phi-c_{i}x^{\mu_{i}})e_{i}=u_{i-1}^{-1}e_{i-1}
\]
(where $e_{0}=0$) so the matrix of $\Phi$ in the basis $\{e_{i}\}$
satisfies $a_{ii}=c_{i}x^{\mu_{i}}$ and $a_{i-1,i}=u_{i-1}^{-1},$
while all the other $a_{ij}=0$.

We may assume, without loss of generality, that if $\mu_{i}=\mu_{j}$
then $c_{i}/c_{j}\notin q^{\mathbb{Z}},$ unless $c_{i}=c_{j}.$ Indeed,
if this were the case, and say $j>i$, replace $e_{j}$ by some $x^{n}e_{j},$
replacing $c_{j}$ by $c_{i}$. We may no longer be able to assume
that the $a_{i-1,i}$ are units, but we shall not be using this.

It is now enough to prove the following. Let $\mu_{1}\le\cdots\le\mu_{r}$
be integers. Let $c_{i}\in\mathbb{C}^{\times}$ be such that whenever
$\mu_{i}=\mu_{j}$, $c_{i}/c_{j}\notin q^{\mathbb{Z}}$, unless $c_{i}=c_{j}.$
Assume that $e_{1},\dots,e_{r}$ is a basis of $M$, $1<n\le r$ and
for all $j<n$
\[
\Phi(e_{j})=x^{\mu_{j}}\left(c_{j}e_{j}+\sum_{i=1}^{j-1}a_{ij}e_{i}\right)
\]
where $a_{ij}\in\mathbb{C}$ and $a_{ij}=0$ unless $(\mu_{i},c_{i})=(\mu_{j},c_{j})$.
Assume that
\[
\Phi(e_{n})=x^{\mu_{n}}\left(c_{n}e_{n}+\sum_{i=1}^{n-1}g_{i}e_{i}\right)
\]
($g_{i}\in k$). Then there exists an
\[
\tilde{e}_{n}=e_{n}+\sum_{i=1}^{n-1}h_{i}e_{i}
\]
such that
\[
\Phi(\tilde{e}_{n})=x^{\mu_{n}}\left(c_{n}\tilde{e}_{n}+\sum_{i=1}^{n-1}a_{in}e_{i}\right)
\]
with $a_{in}\in\mathbb{C}$ and $a_{in}=0$ unless $(\mu_{i},c_{i})=(\mu_{n},c_{n})$.
Using this inductively we modify the basis with which we started until
we get a basis w.r.t. which $\Phi$ has the form described in the
theorem.

Twisting $M$ by $I_{-\mu_{n},1}$ we may assume that $\mu_{n}=0.$

We consider the $h_{i}\in k$ and the $a_{in}$ as variables and solve
for them inductively, starting with $i=n-1$ and going down. Collecting
terms (including the terms arising from the $h_{j}$ for $j>i$) we
get that we have to solve
\[
c_{n}h_{i}-c_{i}\tau(h_{i})x^{\mu_{i}}=f_{i}-a_{in}
\]
for some $f_{i}\in k$. Recall that $\mu_{i}\le0=\mu_{n}.$ Now if
$\mu_{i}<0$ this has a solution $h_{i}$, with $a_{in}=0.$ If $\mu_{i}=0$
but $c_{i}\ne c_{n}$ then $c_{i}q^{m}\ne c_{n}$ for all $m$ by
assumption and again there is a solution with $a_{in}=0.$ Finally,
if $\mu_{i}=0$ and $c_{i}=c_{n}$ we can cancel out all the terms
of $f_{i}$ except for the constant one, which we kill with $a_{in}.$
This concludes the proof of the theorem.
\end{proof}
\begin{cor}
A $q$-difference module over $k$ (or $\widetilde{k}$) descends
to $\mathbb{C}$ if and only if all its slopes are 0.
\end{cor}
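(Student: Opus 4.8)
The plan is to read off both implications directly from the Structure Theorem (Theorem~\ref{thm:Structure}), together with the observation recorded just after Proposition~\ref{prop:filtration_q_difference} that the slopes of $M$ are precisely the slopes of its Jordan--H\"older constituents, hence intrinsic to $M$. So the corollary should be a pure reformulation, with essentially no new content.

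First I would dispose of the easy implication. Suppose all slopes of $M$ vanish. Then in the notation of Theorem~\ref{thm:Structure} there is a single distinct slope $\lambda_1=0$, and the theorem gives $M=\widetilde{k}\otimes_{\mathbb{C}}N_1$ with $\Phi(1\otimes v)=x^{0}\otimes\phi_1(v)=1\otimes\phi_1(v)$ for $v\in N_1$ and $\phi_1\in GL_{\mathbb{C}}(N_1)$. Thus $1\otimes N_1$ is a $\mathbb{C}$-subspace stable under $\Phi$ and $\Phi^{-1}$, i.e.\ under $\Gamma=\langle\tau\rangle$, and $M=\widetilde{k}\otimes_{\mathbb{C}}N_1$; this is exactly the statement that $M$ descends to $\mathbb{C}$. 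I would note in passing that if $M$ is defined over $k$ the least common denominator of the (vanishing) slopes is $1$, so by the last sentence of Theorem~\ref{thm:Structure} the descent is already realized over $k$; and in any case $\mathbb{C}\subset k$, so there is nothing further to check.

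For the converse I would argue as follows. Assume $M\simeq\widetilde{k}\otimes_{\mathbb{C}}M_0$ for a $\Gamma$-difference module $M_0$ over $\mathbb{C}$, and set $\phi=\Phi|_{M_0}\in GL_{\mathbb{C}}(M_0)$. Since $\mathbb{C}$ is algebraically closed, upper-triangularizing $\phi$ produces a complete flag $0=W_0\subset W_1\subset\cdots\subset W_r=M_0$ of $\phi$-stable $\mathbb{C}$-subspaces. Extending scalars, the $\widetilde{k}\otimes_{\mathbb{C}}W_j$ form a filtration of $M$ by $q$-difference submodules whose successive quotients $\widetilde{k}\otimes_{\mathbb{C}}(W_j/W_{j-1})$ are one-dimensional, with $\Phi$ acting by the scalar $c_j\in\mathbb{C}^{\times}$ (the $j$-th diagonal entry of the triangularized $\phi$); hence each is isomorphic to $I_{0,c_j}$, which has slope $0$. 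Therefore every Jordan--H\"older constituent of $M$ has slope $0$, and since the slopes of $M$ are exactly the slopes of its constituents, all slopes of $M$ vanish.

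I do not expect a genuine obstacle here. The only point demanding a moment's attention is that ``descends to $\mathbb{C}$'' is a statement about $M$ over $\widetilde{k}$ (or $k$), so one must make sure the parameter $s$ occurring in Theorem~\ref{thm:Structure} equals $1$ in the case of vanishing slopes --- which is immediate. The substance of the corollary is already contained in the Structure Theorem, and the proof above is essentially its translation.
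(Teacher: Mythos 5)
Your proof is correct and is exactly the intended argument: the paper states this corollary without proof as an immediate consequence of Theorem \ref{thm:Structure} (together with the remark after Proposition \ref{prop:filtration_q_difference} that the slopes are the slopes of the Jordan--H\"older constituents, hence intrinsic). Both directions are read off as you describe, and your side remarks about the parameter $s$ and descent over $k$ are accurate.
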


Borrowing terminology from differential equations, such a module is
also called \emph{regular-singular}. We shall not be using this terminology.
\begin{rem}
\label{rem:Resonants}The $\mathbb{C}$-subspaces $N_{i}$ are not
unique. In fact, $N_{i}$ can be replaced by $x^{\mu_{i}}N_{i}$ ($\mu_{i}\in\mathbb{Q})$
and $\phi_{i}$ by $q^{\mu_{i}}\phi_{i}.$ Two $\mathbb{C}$-subspaces
related in this way will be called \emph{resonants} of each other.
It can be checked that this is the only source of non-uniqueness in
Theorem \ref{thm:Structure}.
\end{rem}

\subsubsection{The structure theorem for a $(p,q)$-difference module over k}

We now introduce a second operator $\sigma f(x)=f(px)$ for $p\in\mathbb{C}^{\times}$
such that $p$ and $q$ are multiplicatively independent. We let $\Gamma=\left\langle \sigma,\tau\right\rangle \subset Aut(k)$
and call a $\Gamma$-difference module over $k$ a (formal) $(p,q)$-difference
module. Such a module is clearly a $q$-difference module, and we
shall show that the introduction of the second operator $\Phi_{\sigma}$,
commuting with $\Phi_{\tau}$, imposes serious restrictions on its
structure, and forces it to descend to $\mathbb{C}.$
\begin{thm}
\label{thm:2Q formal structure} Let $M$ be a $(p,q)$-difference
module over $k,$ for multiplicatively independent $p$ and $q$.
Then $M$ descends to $\mathbb{C}$.
\end{thm}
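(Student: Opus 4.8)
The plan is to exploit the already-established structure theorem for formal $q$-difference modules (Theorem \ref{thm:Structure}) and to use the commuting operator $\Phi_{\sigma}$ to force all the slopes of $M$, viewed as a $q$-difference module, to vanish; by the Corollary, this is exactly the condition for $M$ to descend to $\mathbb{C}$. So first I would decompose $M$, after passing to $k_s$ for a suitable $s$, as $M=\bigoplus_{i=1}^m \widetilde{k}\otimes_{\mathbb{C}} N_i$, where the $i$th summand has pure slope $\lambda_i$ (with $\lambda_1<\cdots<\lambda_m$) with respect to $\tau$, and $\Phi_\tau$ acts on $1\otimes v$ by $x^{\lambda_i}\otimes\phi_i(v)$ for $v\in N_i$. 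The key point is that the slope filtration is canonical (the $\lambda_i$ and the horizontal lengths $\dim_{\mathbb{C}}N_i$ are intrinsic, being read off the Newton polygon, equivalently the Jordan--Hölder factors), so any automorphism of $M$ commuting with the $\widetilde{k}\langle\Phi_\tau\rangle$-structure must respect this decomposition.

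Next I would bring $\Phi_\sigma$ into play. Since $\sigma$ and $\tau$ commute as automorphisms of $\widetilde{k}$ and $\Phi_\sigma\Phi_\tau=\Phi_\tau\Phi_\sigma$, the operator $\Phi_\sigma$ is a $\sigma$-semilinear automorphism of $M$ that commutes with the $\tau$-action, hence is an isomorphism of $q$-difference modules $M\xrightarrow{\sim} \sigma^*M$ (where $\sigma^*M$ carries $\Phi_\tau$ pulled back along $\sigma$). Now I compute the effect of $\sigma$ on slopes: if $N\subset M$ is the pure-slope-$\lambda$ piece, i.e. $\Phi_\tau(1\otimes v)=x^\lambda\otimes\phi(v)$, then in $\sigma^*M$ one has $\Phi_\tau(1\otimes v)=\sigma(x^\lambda)\otimes\phi(v)=p^\lambda x^\lambda\otimes\phi(v)$, so $\sigma^*M$ has the same slope $\lambda$ but with $\phi$ replaced by $p^\lambda\phi$ (this is the "resonant" ambiguity of Remark \ref{rem:Resonants} — note $\sigma$ preserves the valuation $\nu$, so it cannot shift a Newton polygon). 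Therefore $\Phi_\sigma$ carries the slope-$\lambda_i$ component $\widetilde{k}\otimes N_i$ of $M$ isomorphically onto the slope-$\lambda_i$ component of $\sigma^*M$; in particular $\Phi_\sigma$ preserves each $\widetilde{k}\otimes N_i$, and restricted there we get $p^{\lambda_i}\phi_i\simeq \phi_i$ as $\mathbb{C}$-linear maps (conjugate), i.e. $p^{\lambda_i}$ is forced into the ratio set of eigenvalues of $\phi_i$ to itself.

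The main obstacle — and the crux of the argument — is turning this last observation into the conclusion $\lambda_i=0$. The relation $p^{\lambda_i}\phi_i\sim\phi_i$ (conjugacy in $GL_{\mathbb{C}}(N_i)$) says $\phi_i$ and $p^{\lambda_i}\phi_i$ have the same characteristic polynomial, so the finite multiset of eigenvalues of $\phi_i$ is stable under multiplication by $p^{\lambda_i}$; since that multiset is finite and nonempty, $p^{\lambda_i}$ must be a root of unity. But here one must be careful: $\lambda_i\in\mathbb{Q}$ and $p$ is merely a nonzero complex number, so $p^{\lambda_i}$ could a priori be a root of unity without $\lambda_i=0$ (e.g. $p$ itself a root of unity). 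This is where multiplicative independence of $p$ and $q$ — which has not yet been used — enters: running the same argument with $\Phi_\sigma$ replaced by $\Phi_\sigma\Phi_\tau^{-1}$, or rather directly noting that already $\tau$ itself acts on slope-$\lambda_i$ pieces by the resonant twist $\phi_i\mapsto q^{\lambda_i}\phi_i$ and this twist is realized by an isomorphism coming from $\Phi_\tau$ itself, one gets that $q^{\lambda_i}$ also normalizes the eigenvalue multiset; but in fact the cleanest route is: the eigenvalue multiset of $\phi_i$ has finite cardinality $N$, so both $p^{N!\lambda_i}$ and $q^{N!\lambda_i}$ fix every eigenvalue, forcing $p^{N!\lambda_i}=q^{N!\lambda_i}=1$ unless the multiset argument already gives $\lambda_i=0$; writing $\lambda_i=a/s$ with $s$ the common denominator, $p^{a\,N!}=q^{a\,N!}=1$ contradicts multiplicative independence of $p$ and $q$ unless $a=0$. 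Hence every slope $\lambda_i=0$, so by the Corollary to Theorem \ref{thm:Structure}, $M$ descends to $\mathbb{C}$. I expect the delicate bookkeeping to be exactly in the clause "the resonant twist by $p^{\lambda_i}$ is realized by the isomorphism $\Phi_\sigma$", i.e. in checking that $\Phi_\sigma$ really does preserve the canonical slope decomposition and not merely permute pieces of equal slope in a way that mixes the $\phi_i$ — but canonicity of the slope filtration together with $\sigma$-invariance of $\nu$ handles this.
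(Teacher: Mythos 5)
Your first half reproduces the paper's argument for killing the slopes: the commutation $\Phi_{\sigma}\Phi_{\tau}=\Phi_{\tau}\Phi_{\sigma}$ shows that if $I_{\lambda,c}$ is a Jordan--H\"older constituent then so is $I_{\lambda,cp^{\lambda}}$, and iterating plus finiteness of the constituent set forces $p^{(n-m)\lambda}q^{\alpha}=1$ for some $\alpha\in\mathbb{Q}$, whence $\lambda=0$ by multiplicative independence. One imprecision: the conjugacy you assert is not $p^{\lambda_i}\phi_i\sim\phi_i$ but only $p^{\lambda_i}\phi_i\sim q^{\mu}\phi_i$ for some $\mu\in\mathbb{Q}$, because of the resonance ambiguity you yourself flag; your final displayed relation $p^{N!\lambda_i}=q^{N!\lambda_i}=1$ is not what the argument yields. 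The correct relation is $p^{A\lambda_i}q^{B}=1$ with integers $A\neq 0$, $B$, which still gives $\lambda_i=0$ by multiplicative independence, so this part is reparable and matches the paper.

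The genuine gap is at the end. Having shown all slopes vanish, you invoke the Corollary to Theorem \ref{thm:Structure} and conclude ``$M$ descends to $\mathbb{C}$.'' But that Corollary only says $M$ descends to $\mathbb{C}$ \emph{as a $q$-difference module}: it produces a basis in which $A_{\tau}$ is constant, and says nothing about $A_{\sigma}$. The theorem requires a single $\mathbb{C}$-structure $M_0$ preserved by \emph{both} operators, and this does not follow formally: a priori $A_{\sigma}$ could be a genuinely non-constant matrix compatible with a constant $A_{\tau}$. The paper spends the entire second half of its proof on this point. One must use the refined form of Theorem \ref{thm:Structure} to choose the basis so that distinct eigenvalues $c,c'$ of $A_{\tau}$ satisfy $c/c'\notin q^{\mathbb{Z}}$; then writing $A_{\sigma}(x)=\sum B_nx^n$, the consistency equation $A_{\tau}A_{\sigma}(x)=A_{\sigma}(qx)A_{\tau}$ gives $A_{\tau}B_nA_{\tau}^{-1}=q^nB_n$, and since the eigenvalues of $\mathrm{Ad}(A_{\tau})$ are the ratios $c/c'$, the non-resonance choice forces $B_n=0$ for $n\neq 0$. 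Without this normalization the argument fails (a resonant pair of eigenvalues would permit a non-constant $A_{\sigma}$), so this step is an essential missing ingredient, not a routine verification.
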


\begin{proof}
Consider the extension of scalars $M_{\widetilde{k}}$ and a decomposition
\[
M_{\widetilde{k}}=\bigoplus_{i=1}^{m}\widetilde{k}\otimes_{\mathbb{C}}N_{i},
\]
as given by Theorem \ref{thm:Structure}. Let $\lambda$ be a slope
of $M_{\widetilde{k}}$. Then there exists a vector $v$ with $\Phi_{\tau}(v)=cx^{\lambda}v$,
for some $c\in\mathbb{C^{\times}}$. Applying $\Phi_{\sigma}$ we
have
\[
\Phi_{\tau}(\Phi_{\sigma}v)=\Phi_{\sigma}(\Phi_{\tau}v)=\Phi_{\sigma}(cx^{\lambda}v)=cp^{\text{\ensuremath{\lambda}}}x^{\lambda}\Phi_{\sigma}(v).
\]
That is, $\Phi_{\sigma}(v)$ is an eigenvector with eigenvalue $cp^{\lambda}x^{\lambda}$.
Iterating we find that $I_{\lambda,cp^{n\lambda}}$ appears as a Jordan-Hölder
constituent of $M$ for all $n\in\mathbb{\mathbb{\mathbb{\mathbb{N}}}}$.
Since there are finitely many Jordan-Hölder factors, we have that
for some $n>m\ge0$, $I_{\lambda,cp^{m\lambda}}\simeq I_{\lambda,cp^{n\lambda}}$.
Thus for some $\alpha\in\mathbb{Q}$ we must have $p^{(n-m)\lambda}q^{\text{\ensuremath{\alpha}}}=1$.
It follows that $\lambda=\alpha=0$, since $p$ and $q$ are multiplicatively
independent, i.e. the only possible $q$-slope is 0.

Theorem \ref{thm:Structure} implies that there is a basis $e_{1},\ldots,e_{r}$
of $M$ over $k$ such that $[\Phi_{\tau}]_{e}=A_{\tau}^{-1}$, with
$A_{\tau}\in GL_{r}(\mathbb{C})$. Furthermore, the basis can be chosen
so that if $c$ and $c'$ are two distinct eigenvalues of $A_{\tau}$,
$c/c'\notin q^{\mathbb{Z}}.$ Let $A_{\sigma}^{-1}$ be the matrix
of $\Phi_{\sigma}$ in the same basis. We claim that it is also a
constant matrix. In fact, if we write $A_{\sigma}(x)=\sum B_{n}x^{n}$
with $B_{n}\in M_{r}(\mathbb{C})$, the consistency equation 
\[
A_{\tau}A_{\sigma}(x)=A_{\sigma}(qx)A_{\tau}
\]
implies that
\[
A_{\tau}B_{n}A_{\tau}^{-1}=q^{n}B_{n}.
\]
The eigenvalues of $A_{\tau}$ in its adjoint action are all of the
form $c/c'$ for eigenvalues $c,c'$ of $A_{\tau}.$ By our assumption
$c/c'=q^{n}$ only if $n=0.$ Hence $B_{n}=0$ unless $n=0,$ and
$A_{\sigma}$ is a constant matrix. We conclude that 
\[
M_{0}=\oplus_{i=1}^{r}\mathbb{C}e_{i}
\]
is an underyling $\mathbb{C}$-structure of $M$.
\end{proof}
As in theorem \ref{thm:Structure}, the $\mathbb{C}$-structure $M_{0}$
is not unique, because of the existence of resonants.

\subsection{Formal $(p,q)$-Mahler modules}

\subsubsection{Rank-1 formal $q$-Mahler modules}

We turn our attention to the case of formal Mahler modules. These
are $\Gamma$-difference modules over the field of Puiseux series
$\widetilde{k},$ where $\Gamma=\left\langle \tau\right\rangle $
or $\Gamma=\left\langle \sigma,\tau\right\rangle $, and $\sigma f(x)=f(x^{p})$,
$\tau f(x)=f(x^{q})$ are Mahler operators for $p,q\in\mathbb{N}$
multiplicatively independent. We shall call such $\Gamma$-difference
modules (formal) $q$-Mahler modules, when $\Gamma=\left\langle \tau\right\rangle ,$
and (formal) $(p,q)$-Mahler modules when $\Gamma=\left\langle \sigma,\tau\right\rangle .$
As before, we start by studying the structure of $q$-Mahler modules,
and then examine the restriction imposed by the introduction of the
second operator $\Phi_{\sigma}.$

Rank-1 $q$-Mahler modules are classified by $\widetilde{k}^{\times}/(\widetilde{k}^{\times})^{\tau-1}.$
Once again, we pick $f\in\widetilde{k}^{\times}$ and write $f=cx^{\lambda}f_{1}$
where $c\in\mathbb{C}^{\times}$ and $f_{1}\in U_{1}(k_{s}).$ This
time both $f_{1}$ and $x^{\lambda}$ are in $(\widetilde{k}^{\times})^{\tau-1},$
as $(x^{\mu})^{\tau-1}=x^{\mu(q-1)}.$ We get the following easy Proposition.
\begin{prop}
Every rank-1 $q$-Mahler module over $\widetilde{k}$ descends to
$\mathbb{C}.$ Let $I_{c},$ for $c\in\mathbb{C}^{\times},$ be the
rank-1 module $\widetilde{k}e,$ with $\Phi_{\tau}(e)=ce.$ Then every
rank-1 module is isomorphic to $I_{c}$ for a unique $c$.
\end{prop}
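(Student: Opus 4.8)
The plan is to mimic the rank-1 $q$-difference computation, but observe that the Mahler substitution $x\mapsto x^{q}$ makes the situation simpler: the group $(\widetilde{k}^{\times})^{\tau-1}$ is now large enough to kill both the $x^{\lambda}$ factor and the principal-unit factor, so only the constant $c$ survives. Concretely, I would first recall from Example \ref{exa:examples}(i) (with $r=1$) that rank-1 $q$-Mahler modules over $\widetilde{k}$ are classified by $\widetilde{k}^{\times}/(\widetilde{k}^{\times})^{\tau-1}$, where the class of $f$ corresponds to the module $\widetilde{k}e$ with $\Phi_{\tau}(e)=f^{-1}e$ (or $f e$, with the appropriate convention).

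Next I would decompose an arbitrary $f\in\widetilde{k}^{\times}$ as $f=cx^{\lambda}f_{1}$ with $c\in\mathbb{C}^{\times}$, $\lambda\in\mathbb{Q}$, and $f_{1}\in U_{1}(k_{s})$ a principal unit for some $s$. Then I would show $x^{\lambda}\in(\widetilde{k}^{\times})^{\tau-1}$ using the identity $(x^{\mu})^{\tau-1}=\tau(x^{\mu})/x^{\mu}=x^{\mu q}/x^{\mu}=x^{\mu(q-1)}$, so taking $\mu=\lambda/(q-1)$ (which lies in $\mathbb{Q}$ since $q\ge 2$) gives $x^{\lambda}=(x^{\mu})^{\tau-1}$. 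For the principal unit $f_{1}$, I would solve $f_{1}=g_{1}^{\tau-1}=\tau(g_{1})/g_{1}$ for $g_{1}\in U_{1}(k_{s})$ by successively determining the coefficients: writing $g_{1}=1+\sum_{n\ge 1}t_{n}x^{n/s}$, the Mahler action sends $x^{n/s}$ to $x^{nq/s}$, so $\tau(g_{1})/g_{1}$ has the form $1+\sum t_{n}x^{nq/s}-(\text{lower order corrections})$; comparing coefficients of $f_{1}=1+\sum b_{n}x^{n/s}$, each $t_{n}$ is determined by $b_{n}$ and the previously-found $t_{m}$ with $m<n$ (note the coefficient of $x^{n/s}$ in $\tau(g_1)/g_1$ is $-t_n$ plus terms in lower $t$'s, since $nq>n$), with no obstruction. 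Hence $f\equiv c \pmod{(\widetilde{k}^{\times})^{\tau-1}}$, so every class is represented by a constant.

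It then remains to check that $c$ is uniquely determined, i.e.\ that $I_{c}\simeq I_{c'}$ forces $c=c'$. This amounts to showing that no nonconstant $g\in\widetilde{k}^{\times}$ satisfies $\tau(g)/g\in\mathbb{C}^{\times}$: if $g=dx^{\nu}g_{1}$ with $g_{1}$ a principal unit and $\nu\ne 0$, then $\tau(g)/g=d^{q-1}x^{\nu(q-1)}\cdot\tau(g_1)/g_1$ has nonzero valuation (if $\nu\neq 0$) or is a nonconstant unit (if $\nu=0$ but $g_1\neq 1$), hence is not in $\mathbb{C}^{\times}$ unless $g$ itself is constant; and when $g$ is constant $\tau(g)/g=1$. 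Therefore $c/c'\in(\widetilde{k}^{\times})^{\tau-1}\cap\mathbb{C}^{\times}=\{1\}$, giving uniqueness. Finally, since $I_{c}$ visibly descends to $\mathbb{C}$ (the line $\mathbb{C}e$ is $\Gamma$-stable), every rank-1 $q$-Mahler module descends to $\mathbb{C}$.

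The only point requiring genuine care—though it is routine—is the solvability of $f_{1}=\tau(g_{1})/g_{1}$ in principal units: one must verify that the recursion for the $t_{n}$ never divides by zero. Here the Mahler case is actually easier than the $q$-difference case, because $\tau$ raises valuations ($x^{n/s}\mapsto x^{nq/s}$ with $nq/s > n/s$), so the equation for the coefficient of $x^{n/s}$ is simply $-t_{n}=b_{n}+(\text{polynomial in }b_{m},t_{m},\ m<n)$, with coefficient $-1\neq 0$ on $t_n$; there is no analogue of the ``$q$ not a root of unity'' hypothesis needed. This is the parenthetical remark already made in the text, so I would simply state it and move on.
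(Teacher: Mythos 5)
Your argument is correct and is essentially the paper's own: the paper simply observes, before stating the proposition, that in the decomposition $f=cx^{\lambda}f_{1}$ both $x^{\lambda}$ and $f_{1}$ lie in $(\widetilde{k}^{\times})^{\tau-1}$, exactly as you verify (including the remark that the recursion for $g_{1}$ needs no root-of-unity hypothesis). One small slip in your uniqueness step: $\tau$ fixes constants, so $\tau(d)/d=1$ rather than $d^{q-1}$ — but this factor is a constant either way, and your valuation/principal-unit argument and the conclusion are unaffected.
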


\subsubsection{Factorization in $\widetilde{k}\left\langle \Phi\right\rangle $}

We consider the twisted polynomial ring $\widetilde{k}\left\langle \Phi\right\rangle $
consisting of polynomials
\[
\sum_{i=0}^{n}a_{i}\Phi^{n-i},
\]
$(a_{i}\in\widetilde{k}$) where $\Phi a=\tau(a)\Phi.$ 
\begin{lem}
\label{lem:Factorization}\emph{ }Let $\sum_{i=0}^{n}a_{i}\Phi^{n-i}\in\widetilde{k}\left\langle \Phi\right\rangle $,
and assume $a_{0}=1,\,\,a_{n}\neq0.$ Then there exist $c\in\mathbb{C}^{\times},\,\mu\in\mathbb{Q},\,b_{0},\dots,b_{n-1}\in\widetilde{k}$
such that $b_{0}\in U_{1}(\widetilde{k})$, $b_{n-1}\neq0$ and
\[
\sum_{i=0}^{n}a_{i}\Phi^{n-i}=\tau(b_{0})^{-1}(\Phi-cx^{\mu})\sum_{i=0}^{n-1}b_{i}\Phi^{n-1-i}.
\]
\end{lem}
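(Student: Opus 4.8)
\emph{Plan.} Expand the right-hand side using $\Phi a=\tau(a)\Phi$ and compare coefficients of $\Phi^{n-j}$ on the two sides of the asserted identity. The coefficient of $\Phi^{n}$ matches automatically (both equal $a_0=1$), while the remaining comparisons read
\[
\tau(b_j)-cx^{\mu}b_{j-1}=\tau(b_0)\,a_j\quad(1\le j\le n-1),\qquad -cx^{\mu}b_{n-1}=\tau(b_0)\,a_n .
\]
Since $\tau$ is an automorphism of $\widetilde{k}$, the first batch of equations determines $b_1,\dots,b_{n-1}$ uniquely from $b_0$, $c$ and $\mu$; solving the recursion explicitly and writing $\xi_m=\tfrac1{q-1}(1-q^{-m})$ one gets
\[
b_j=\sum_{m=0}^{j}c^{m}x^{\mu\xi_m}\,\tau^{-m}(b_0)\,\tau^{-m-1}(a_{j-m})
\]
with the convention $a_0=1$. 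Substituting $j=n-1$ into the last equation and using the identity $1+\xi_m=q\,\xi_{m+1}$, the whole problem collapses to producing $c\in\mathbb{C}^\times$, $\mu\in\mathbb{Q}$ and $b_0\in U_1(\widetilde{k})$ satisfying the single scalar equation
\[
\sum_{m=0}^{n}c^{m}x^{\mu q\xi_m}\,\tau^{1-m}(b_0)\,\tau^{-m}(a_{n-m})=0 .
\]
Note that once this is done, $b_{n-1}\ne0$ is automatic because $a_n\ne0$ and $c,b_0\ne0$. (Module-theoretically, this amounts to finding a rank-$1$ $\tau$-submodule of $\widetilde{k}\langle\Phi\rangle/\widetilde{k}\langle\Phi\rangle\bigl(\sum a_i\Phi^{n-i}\bigr)$, normalized so that $\Phi$ acts by $cx^{\mu}$; but it seems cleanest to argue directly with the displayed equation, in the style of Lemma~\ref{lem:factorization-1}.)

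\emph{Choice of $\mu$ and of the leading value of $c$.} Since $\nu(c^m)=0$, $\nu(\tau^{1-m}(b_0))=0$ and $\nu(\tau^{-m}(a_{n-m}))=q^{-m}\nu(a_{n-m})$, the $m$-th summand above has valuation $q\xi_m\mu+q^{-m}\nu(a_{n-m})$ (discard $m$ with $a_{n-m}=0$), an affine function of $\mu$ whose slope $q\xi_m=\tfrac q{q-1}(1-q^{-m})$ is \emph{strictly increasing} in $m$. The extreme indices $m=0$ (with $a_n\ne0$) and $m=n$ (with $a_0=1$) are always present, and the slope-$0$ line $\mu\mapsto\nu(a_n)$ occurs while all slopes are $\ge0$; hence the lower concave envelope of these lines is bounded above by $\nu(a_n)$ and unbounded below, so it has at least one breakpoint. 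Fix $\mu$ to be such a breakpoint, where a set $S$ of at least two indices attains the minimal valuation $V$. Extracting the coefficient of $x^{V}$ in the displayed equation, and using that $b_0\in U_1$ contributes leading coefficient $1$, one obtains the polynomial equation $\sum_{m\in S}\gamma_{n-m}\,c^{m}=0$, where $\gamma_i\ne0$ denotes the leading coefficient of $a_i$. This polynomial has at least two nonzero coefficients, hence a root $c\in\mathbb{C}^\times$.

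\emph{Solving for $b_0$.} After replacing $x$ by $x^{1/s}$ for a suitable $s$ (dictated by the denominator of $\mu$ and by the $q^{1-m}$-twists, $0\le m\le n$, so that all exponents of $x$ occurring become integral) we may work over $k_s$. Write $b_0=1+\sum_{l\ge1}t_l x^{l}$ and solve the displayed equation coefficient by coefficient in powers of $x$: having fixed $\mu$ and $c$, equating successive coefficients yields equations that are linear in the highest new unknown $t_N$, with coefficient equal, up to a nonzero constant and a power of $q$, to a value $\sum_{m\in S}\gamma_{n-m}(q^{?}c)^{m}$ of the same polynomial evaluated at a $q$-power multiple of $c$. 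As in the proof of Lemma~\ref{lem:factorization-1}, one first replaces $c$ by the last term of the sequence $c,q^{-1}c,q^{-2}c,\dots$ still solving $\sum_{m\in S}\gamma_{n-m}c^{m}=0$ — possible since $q$ is not a root of unity — so that none of these coefficients vanishes; then every $t_N$ is uniquely and successively determined, producing $b_0\in U_1(\widetilde{k})$, and hence all the $b_i$ and the factorization.

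\emph{Main obstacle.} The delicate point is the ``Newton polygon'' analysis in this \emph{exponential} (Mahler) situation: in contrast with Lemma~\ref{lem:factorization-1}, the relevant weights $q\xi_m$ are bounded rather than linear in $m$, so one must check carefully that the lower envelope genuinely has a breakpoint supported on at least two indices with nonvanishing leading coefficients, and keep precise track of which fractional powers of $x$ enter in order to identify the $s$ over which the argument is run. Verifying, in the successive-approximation step, that the coefficient of each $t_N$ is — up to a nonzero scalar and a power of $q$ — a value of the above polynomial in $c$ (so that the resonance-avoidance trick applies verbatim) is the one computation requiring care; the rest is bookkeeping parallel to the $q$-difference case.
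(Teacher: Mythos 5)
Your skeleton---the recursion $\tau(b_j)-cx^{\mu}b_{j-1}=\tau(b_0)\,a_j$, its elimination to a single scalar equation, a Newton-polygon choice of $\mu$ forcing at least two summands to share the minimal valuation, and the resulting polynomial whose nonzero root gives $c$---is exactly the paper's argument (the paper writes the eliminated equation in terms of $u=\tau(b_0)$, so that only nonnegative powers of $\tau$ hit the unknown, and its explicit $\mu=\min_{1\le i\le n}\frac{1-1/q}{1-1/q^{i}}\nu(a_{i})$ is precisely the breakpoint at which the maximal-slope line participates). The gap is in your last step, the one you yourself flag as ``requiring care'': the claim that the coefficient of each new unknown $t_{N}$ is a value of $\sum_{m\in S}\gamma_{n-m}c^{m}$ at a $q$-power multiple of $c$, so that the resonance-avoidance trick of Lemma \ref{lem:factorization-1} must be invoked. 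That mechanism belongs to the $q$-difference case, where $\tau(x)=qx$ multiplies the coefficient of $x^{N}$ by $q^{N}$. Here $\tau(x)=x^{q}$, so $\tau^{i}(u)=1+\sum_{l}d_{l}x^{q^{i}l}$: the twist moves the unknowns to strictly higher degrees rather than scaling them. Consequently the coefficient of $x^{N}$ in the scalar equation is $c^{n}d_{N}$ plus an expression in $d_{1},\dots,d_{N-1}$ and known data; the diagonal coefficient is always $c^{n}\neq0$, no resonance condition on $c$ arises, and there is nothing to avoid. Your claimed form of the linear coefficient is not what the computation produces, so the verification you defer would fail as stated (the true statement is easier, not harder, which is why the paper dismisses this step as routine).

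Two related repairs are needed to make the triangular structure visible. First, the untwisted term (your $m=n$, the paper's $i=0$, carrying $c^{n}$) must be among the minimal-valuation summands; an arbitrary breakpoint of the lower envelope does not guarantee this, whereas the paper's explicit $\mu$ does. Second, solve for $u=\tau(b_0)$ rather than for $b_{0}$ itself, so that only $\tau^{i}$ with $i\ge0$ acts on the unknown; in your normalization $\tau^{1-m}(b_{0})$ for $m\ge2$ pushes the unknowns to \emph{lower}, fractional degrees, and the successive solving as you describe it does not close up.
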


{[}Compare Chapter IV, §4 Lemma 2 in Demazure's \emph{Lectures on
$p$-divisible groups }LNM 302 (1972) Springer-Verlag. That lemma
is key to the Manin-Dieudonné classification of $F$-isocrystals over
an algebraically closed field of characteristic $p,$ or - what amounts
to the same - the classification of $p$-divisible groups over such
a field up to isogeny.{]}
\begin{proof}
To simplify the notation we write, in the proof of the lemma only,
$a^{(i)}=\tau^{i}(a).$ Write also $u=b_{0}^{(1)}.$ We have to find
$\mu,u,b_{1},\dots,b_{n-1}$ and $c$ as in the lemma satisfying the
equations
\begin{equation}
ua_{i}=b_{i}^{(1)}-cx^{\mu}b_{i-1}\,\,\,\,\,\,(0\le i\le n)\label{eq:a and b}
\end{equation}
($b_{-1}=b_{n}=0)$. Solving successively for $b_{i}$ we get the
equation
\begin{equation}
(ua_{0})c^{n}+(u^{(1)}a_{1}^{(1)}x^{-\mu q})c^{n-1}+\cdots+(u^{(n)}a_{n}^{(n)}x^{-\mu(q+\cdots+q^{n})})=0,\label{eq:c}
\end{equation}
which we have to solve for $u\in U_{1}(\widetilde{k})$ and $c\in\mathbb{C}^{\times}.$
Let
\[
\mu=\min_{1\le i\le n}\left(\frac{1-1/q}{1-1/q^{i}}\right)\nu(a_{i})
\]
where $\nu$ is the valuation on $\widetilde{k}$, normalized by $\nu(x)=1.$
Note that
\[
\nu(a_{i}^{(i)}x^{-\mu(q+\cdots+q^{i})})=q^{i}\left(\nu(a_{i})-\mu\left(\frac{1-1/q^{i}}{1-1/q}\right)\right)\ge0
\]
and there exists an index $i\ge1$ for which this is 0. This means
that the expression $a_{i}^{(i)}x^{-\mu(q+\cdots+q^{i})}$, appearing
together with $u^{(i)}$ as the coefficient of $c^{n-i}$, is integral,
i.e. has no pole, and at least one such expression, besides the leading
one, is a unit. Replacing $x$ by $x^{s}$ for a suitable $s$, we
may assume that all the exponents of $x$ appearing in (\ref{eq:c})
are integral. We solve (\ref{eq:c}) modulo higher and higher powers
of $x,$ setting
\[
u=1+d_{1}x+d_{2}x^{2}+\cdots
\]
and choosing the $d_{m}$ successively. By what we have seen, there
exists a $c\neq0$ in $\mathbb{C}$ solving (\ref{eq:c}) modulo $x$
(i.e. substituting $x=0).$ Noting that
\[
u^{(i)}=1+d_{1}x^{q^{i}}+d_{2}x^{2q^{i}}+\cdots
\]
 it is then an easy matter to solve successively for the $d_{m}.$
\end{proof}
\begin{cor}
(Compare with Theorem 15 in \cite{Roq}.) Every monic polynomial from
$\widetilde{k}\left\langle \Phi\right\rangle $ factors as
\end{cor}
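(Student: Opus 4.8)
The plan is a straightforward induction on the degree $n$ of the monic polynomial $P(\Phi)=\Phi^{n}+a_{1}\Phi^{n-1}+\cdots+a_{n}\in\widetilde{k}\langle\Phi\rangle$, peeling off one linear factor at a time by repeated application of Lemma \ref{lem:Factorization}. The target is a factorization
\[
P(\Phi)=u_{0}(\Phi-c_{1}x^{\mu_{1}})u_{1}(\Phi-c_{2}x^{\mu_{2}})u_{2}\cdots u_{n-1}(\Phi-c_{n}x^{\mu_{n}})u_{n},
\]
with $u_{i}$ units of $\widetilde{k}$ (principal units, after passing to $\mathbb{C}((x^{1/s}))$ for a suitable $s$), $c_{i}\in\mathbb{C}^{\times}$ and $\mu_{i}\in\mathbb{Q}$, exactly the Mahler analogue of the factorization extracted in the $q$-difference case inside the proof of Theorem \ref{thm:Structure}. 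The cases $n=0,1$ are immediate, and one reduces at once to the situation $a_{n}\neq0$: if instead $a_{n}=0$ then $P(\Phi)=\bigl(\Phi^{n-1}+a_{1}\Phi^{n-2}+\cdots+a_{n-1}\bigr)\Phi$, so after stripping off all trailing copies of $\Phi$ on the right --- each of which is already an admissible linear factor ($c=0$, $\mu=0$) --- the remaining factor is monic with nonzero constant term.

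So assume $a_{n}\neq0$. Lemma \ref{lem:Factorization} produces $c_{1}\in\mathbb{C}^{\times}$, $\mu_{1}\in\mathbb{Q}$, and $b_{0},\dots,b_{n-1}\in\widetilde{k}$ with $b_{0}\in U_{1}(\widetilde{k})$, $b_{n-1}\neq0$, and
\[
P(\Phi)=\tau(b_{0})^{-1}(\Phi-c_{1}x^{\mu_{1}})\bigl(b_{0}\Phi^{n-1}+b_{1}\Phi^{n-2}+\cdots+b_{n-1}\bigr).
\]
Since elements of $\widetilde{k}$ commute with one another, the second factor equals $b_{0}P_{1}(\Phi)$ with $P_{1}(\Phi)=\Phi^{n-1}+b_{0}^{-1}b_{1}\Phi^{n-2}+\cdots+b_{0}^{-1}b_{n-1}$ monic of degree $n-1$ and with nonzero constant term $b_{0}^{-1}b_{n-1}$. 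Applying the induction hypothesis to $P_{1}(\Phi)$ and substituting gives the required factorization of $P(\Phi)$; the surviving unit $b_{0}$ is either kept as one of the interspersed factors $u_{i}$, or, if one wishes to normalize the $u_{i}$ differently, transported to the left past $\Phi-c_{1}x^{\mu_{1}}$ by means of $\Phi\,b_{0}=\tau(b_{0})\Phi$ (which only turns $c_{1}x^{\mu_{1}}$ into $c_{1}x^{\mu_{1}}\tau(b_{0})^{-1}b_{0}$).

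I do not anticipate a real obstacle: all the arithmetic work is already contained in Lemma \ref{lem:Factorization}, and the corollary is pure bookkeeping in the non-commutative ring $\widetilde{k}\langle\Phi\rangle$. The two small points needing attention are (a) the degenerate case $a_{n}=0$, handled as above by dividing out powers of $\Phi$, and (b) verifying that the quotient delivered by the Lemma renormalizes to a \emph{monic} polynomial which still has a nonvanishing constant term, so that the Lemma (equivalently the induction hypothesis) applies to it again; this is exactly what the clause $b_{n-1}\neq0$ guarantees. No new extension of scalars is required, since the statement is over $\widetilde{k}$, which already contains every $x^{1/s}$ and so accommodates the rational exponents $\mu_{i}$. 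This factorization is the tool that will feed into the structure theorem for formal $q$-Mahler modules, much as Lemma \ref{lem:factorization-1} and its corollary fed into Theorem \ref{thm:Structure}.
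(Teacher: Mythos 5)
Your inductive scheme is exactly the paper's: the corollary is obtained by iterating Lemma \ref{lem:Factorization}, renormalizing the right-hand factor $b_{0}\Phi^{n-1}+\cdots+b_{n-1}$ to the monic polynomial with coefficients $b_{0}^{-1}b_{i}$, whose constant term $b_{0}^{-1}b_{n-1}$ is nonzero so that the lemma applies again, and absorbing the leftover principal units into the $u_{i}$. That bookkeeping is correct and is all the paper says about the bare existence of the factorization.

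However, the corollary asserts more than existence: the slopes are required to satisfy $\mu_{j}\le q\mu_{j+1}$, and this is the only part of the statement for which the paper supplies an argument. It follows from the inequality $q\nu(b_{i})\ge\mu\bigl(1+\tfrac{1}{q}+\cdots+\tfrac{1}{q^{i-1}}\bigr)$, proved by induction on $i$ from the recursion $(\ref{eq:a and b})$, $ua_{i}=\tau(b_{i})-cx^{\mu}b_{i-1}$; plugging this lower bound on $\nu(b_{i})=\nu(b_{0}^{-1}b_{i})$ into the minimum defining the slope of the degree-$(n-1)$ quotient gives $\mu'\ge\mu/q$. Your proposal never mentions this constraint, so as written it proves a strictly weaker statement. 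Two smaller points. First, a monic polynomial with $a_{n}=0$ genuinely cannot be written in the stated form, since every factor $u_{j}$ and $\Phi-c_{j}x^{\mu_{j}}$ with $c_{j}\ne0$ has nonzero constant term and hence so does the product; your fix ``$c=0$, $\mu=0$'' alters the statement rather than handling a degenerate case --- the corollary is implicitly, as in the lemma, about polynomials with nonzero constant term. Second, your optional alternative of pushing $b_{0}$ leftwards past the linear factor does not preserve its shape: $(\Phi-c_{1}x^{\mu_{1}})b_{0}=\tau(b_{0})\bigl(\Phi-\tau(b_{0})^{-1}b_{0}\,c_{1}x^{\mu_{1}}\bigr)$, and $\tau(b_{0})^{-1}b_{0}$ is not a constant; keep to your primary route of leaving $b_{0}$ in place as part of the next unit $u_{1}$.
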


\[
u_{0}(\Phi-c_{1}x^{\mu_{1}})u_{1}(\Phi-c_{2}x^{\mu_{2}})u_{2}\cdots u_{n-1}(\Phi-c_{n}x^{\mu_{n}})u_{n}
\]
where the $\mu_{j}\in\mathbb{Q},$ $\mu_{j}\le q\mu_{j+1},$ $c_{j}\in\mathbb{C}^{\times}$
and $u_{j}\in U_{1}(\widetilde{k})$.
\begin{proof}
Apply the lemma inductively. The relation $\mu_{j}\le q\mu_{j+1}$
follows from the inequality
\[
q\nu(b_{i})\ge\mu(1+\frac{1}{q}+\cdots+\frac{1}{q^{i-1}})
\]
which is proved by induction on $i,$ based on (\ref{eq:a and b}).
\end{proof}

\subsubsection{The structure theorem for a $q$-Mahler module over $\widetilde{k}$}

Let $M$ be a $q$-Mahler module over $\widetilde{k}.$ Similarly
to Proposition \ref{prop:filtration_q_difference} we get the following
structure theorem for $M.$
\begin{thm}
(Compare with Theorem 9 in \cite{Roq}.) \label{thm:Formal q-Mahler filtration}Let
$M$ be a $q$-Mahler module over $\widetilde{k}.$ Then $M$ has
an ascending filtration with one-dimensional graded pieces of the
form $I_{c}$.
\end{thm}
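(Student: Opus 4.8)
The plan is to imitate the proof of Proposition \ref{prop:filtration_q_difference}: reduce to the cyclic case and extract a rank-$1$ submodule, then induct on the rank. Concretely, it suffices to show that every nonzero $q$-Mahler module $M$ over $\widetilde{k}$ contains a one-dimensional $\Gamma$-difference submodule, since that submodule is necessarily isomorphic to some $I_c$ (by the rank-$1$ classification just established), and $M/M_1$ is again a $q$-Mahler module of strictly smaller rank, so induction on $r=\operatorname{rk}M$ closes the argument.

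To produce the rank-$1$ submodule I would first invoke Birkhoff's cyclic vector lemma — available here in exactly the same form as in the $q$-difference case, proved by the identical induction (pick a rank-$1$ submodule $M_1$, a vector $v$ lifting a cyclic vector of $M/M_1$, twist the generator $e$ of $M_1$ so that $P(\Phi_\tau)e\neq 0$, and note $v+e$ generates $M$) — so we may assume $M=\widetilde{k}\langle\Phi_\tau\rangle v$ is cyclic. Let $P(T)=T^n+a_1T^{n-1}+\dots+a_n$ be the monic minimal polynomial of $v$ over $\widetilde{k}\langle\Phi\rangle$, with $a_n\neq 0$ (same argument as before: if $a_n=0$ then $\Phi^{-1}$ is not available, but one checks $\tau^{-1}$ applied appropriately lowers the degree — actually in the Mahler twisted \emph{polynomial} ring one argues directly that a minimal $P$ has $a_n\neq 0$). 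Then $M\cong \widetilde{k}\langle\Phi\rangle/\widetilde{k}\langle\Phi\rangle P(\Phi)$, and I apply the Corollary to Lemma \ref{lem:Factorization} to factor
\[
P(\Phi)=u_0(\Phi-c_1x^{\mu_1})u_1(\Phi-c_2x^{\mu_2})u_2\cdots u_{n-1}(\Phi-c_nx^{\mu_n})u_n
\]
with $u_j\in U_1(\widetilde{k})$, $c_j\in\mathbb{C}^\times$, $\mu_j\in\mathbb{Q}$. Setting
\[
e_1=\bigl\{u_1(\Phi-c_2x^{\mu_2})u_2\cdots u_{n-1}(\Phi-c_nx^{\mu_n})u_n\bigr\}v\neq 0,
\]
we get $(\Phi-c_1x^{\mu_1})e_1=0$, i.e.\ $\Phi_\tau(e_1)=c_1x^{\mu_1}e_1$, so $M_1=\widetilde{k}e_1$ is a rank-$1$ $\tau$-difference submodule; by the rank-$1$ Mahler classification it is isomorphic to $I_c$ for a suitable $c$ (after the harmless substitution $x\mapsto x^s$ absorbing the denominators of the $\mu_j$, if one wants integral slopes, though this is not even needed since $x^\mu$ is a $(\tau-1)$-coboundary in the Mahler setting).

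The main obstacle — and the only place the argument genuinely differs from Proposition \ref{prop:filtration_q_difference} — is checking that the factorization Corollary really applies to the minimal polynomial of a cyclic vector and that the resulting $e_1$ is nonzero; this is because $\widetilde{k}\langle\Phi\rangle$ here is a twisted \emph{polynomial} ring (not Laurent polynomial), so one must verify $a_n\neq 0$ and that the partial products applied to $v$ stay nonzero, using minimality of $\deg P$ and the fact that $\widetilde{k}\langle\Phi\rangle$ is a (left and right) Euclidean domain. Once that bookkeeping is done, the induction is immediate and the one-dimensional graded pieces are automatically of the form $I_c$ by the preceding Proposition — in particular, unlike the $q$-difference case, no slopes survive: the exponents $x^{\mu_j}$ can be trivialized, which is consistent with the statement that every rank-$1$ $q$-Mahler module already descends to $\mathbb{C}$.
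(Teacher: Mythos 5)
Your argument is essentially the paper's: produce a rank-$1$ submodule by factoring a left factor $(\Phi-cx^{\mu})$ out of the relation satisfied by a generating vector via Lemma \ref{lem:Factorization}, observe the resulting $e_1$ is a nonzero eigenvector, trivialize the slope using $(x^{\mu/(q-1)})^{\tau-1}=x^{\mu}$, and induct on the rank via $M/M_1$. The only substantive difference is the first step, and there you have a circularity: you reduce to the cyclic case by ``invoking Birkhoff's cyclic vector lemma, proved by the identical induction (pick a rank-$1$ submodule $M_1$, \dots)''. That proof of the cyclic vector lemma \emph{presupposes} that every nonzero $q$-Mahler module has a rank-$1$ submodule, which is exactly what you are in the middle of proving; the paper only establishes the cyclic vector lemma (in the $q$-difference setting) \emph{after} the existence of rank-$1$ submodules. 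The repair is immediate and is what the paper actually does: take any nonzero $u\in M$ and replace $M$ by the cyclic submodule $\widetilde{k}\langle\Phi\rangle u$ it generates (no lemma needed), or equivalently just work with the minimal $n$ such that $u,\Phi u,\dots,\Phi^{n}u$ are linearly dependent; a rank-$1$ submodule of this submodule is one of $M$.

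Two smaller points. First, $a_n\neq 0$ is not a delicate issue: $\Phi=\Phi_\tau$ lies in $GL_{\mathbb{C}}(M)$ by definition and $\tau$ is an automorphism of $\widetilde{k}$, so if $a_n=0$ one applies $\Phi^{-1}$ and $\tau^{-1}$ to the coefficients to get a shorter relation, contradicting minimality of $n$ — the same argument as in the $q$-difference case. Second, you do not need the full Corollary factorization into $n$ linear factors; a single application of Lemma \ref{lem:Factorization} already yields $e_1=\sum_{i=0}^{n-1}b_i\Phi^{n-1-i}u$ with $b_0\in U_1(\widetilde{k})$, and $e_1\neq 0$ because $u,\Phi u,\dots,\Phi^{n-1}u$ are linearly independent. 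With these adjustments your proof coincides with the paper's.
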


\begin{proof}
It is enough to prove that any $q$-Mahler module over $\widetilde{k}$
has a rank 1 submodule. Let $u$ be any non-zero vector in $M,$ and
$n$ the minimal number such that $u,\Phi u,\dots,\Phi^{n}u$ are
linearly dependent over $\widehat{K}$. Let $\sum_{i=0}^{n}a_{i}\Phi^{n-i}u=0$
be a linear dependence with $a_{0}=1,$ and decompose the polynomial
as in the lemma. Let $v=\sum_{i=0}^{n-1}b_{i}\Phi^{n-1-i}u.$ Note
that $v\neq0$ by our assumption on $n.$ Then $\Phi v=cx^{\mu}v.$
If $\mu\neq0$ replace $v$ by $x^{-\mu/(q-1)}v$.
\end{proof}
Contrary to Theorem \ref{thm:Structure} we do not have at our disposal
a more refined structure theorem describing the off-diagonal entries
in the resulting upper triangular matrix associated with $\Phi.$
One can not expect to have all the entries in $\mathbb{C}$, because
a general $q$-Mahler module need not descend to $\mathbb{C}.$ However,
the theorem we have just proved suffices to obtain the Mahler analogue
of Theorem \ref{thm:2Q formal structure}.

\subsubsection{The structure theorem for a $(p,q)$-Mahler module over $\widetilde{k}$}

We now consider a \emph{pair} of operators $\sigma$ and $\tau$ as
above
\[
\sigma f(x)=f(x^{p}),\,\,\,\,\tau f(x)=f(x^{q})
\]
for multiplicatively independent $p,q\in\mathbb{N}.$ Let $\Gamma=\left\langle \sigma,\tau\right\rangle \subset Aut(\widetilde{k}).$
A $\Gamma$-difference module over $\widetilde{k}$ will be called
a (formal) $(p,q)$-Mahler module.
\begin{thm}
\label{lem:2M formal structure}Every $(p,q)$-Mahler module over
$\widetilde{k}$ admits a unique $\mathbb{C}$-structure.
\end{thm}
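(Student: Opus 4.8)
The plan is to mimic the proof of Theorem \ref{thm:2Q formal structure} as closely as possible, using Theorem \ref{thm:Formal q-Mahler filtration} in place of Theorem \ref{thm:Structure}. First I would observe that a $(p,q)$-Mahler module $M$ is in particular a $q$-Mahler module, so by Theorem \ref{thm:Formal q-Mahler filtration} it has an ascending filtration with graded pieces $I_{c_i}$; equivalently, after passing to a suitable $\widetilde{k}$-basis, $\Phi_{\tau}$ is represented by an upper-triangular matrix whose diagonal entries are constants $c_i\in\mathbb{C}^{\times}$. (There is no slope issue here, since Mahler rank-$1$ modules always descend to $\mathbb{C}$, so the diagonal entries are genuine constants, not $c_i x^{\mu_i}$.) Because $\Phi_{\sigma}$ and $\Phi_{\tau}$ commute, $\Phi_{\sigma}$ carries a $\Phi_{\tau}$-eigenvector with eigenvalue $c$ to one with eigenvalue $c$ (here $\sigma$ fixes constants, which is the key difference from the $q$-difference case where the eigenvalue got multiplied by $p^{\lambda}$). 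So $\Phi_{\sigma}$ preserves the generalized eigenspace decomposition of $M$ under the semisimplification of $\Phi_{\tau}$, and on each piece I would try to further normalize the basis.

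The heart of the argument should be a normal-form step analogous to the second half of the proof of Theorem \ref{thm:Structure}: having arranged that $[\Phi_{\tau}]$ is upper-triangular with constant diagonal, and grouping basis vectors by the value of $c_i$ (arranging, as one may, that distinct groups have distinct constants — there is no $q^{\mathbb{Z}}$-ambiguity to worry about in the Mahler case since the twisting parameter $\mu$ there lives in $x^{\mu(q-1)}$, hence was already eliminated), I would show one can choose the basis so that $[\Phi_{\tau}] = A_{\tau}^{-1}$ is itself a constant matrix in $GL_r(\mathbb{C})$, by solving the relevant twisted-linear equations $c_n h_i - c_i\tau(h_i) = f_i - a_{in}$. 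Here $\tau$ acts on $\widetilde{k}$ via $x\mapsto x^q$; the operator $h\mapsto c_n h - c_i\tau(h)$ on $\widetilde{k}$ is invertible modulo constants when $c_i\ne c_n$ and, when $c_i=c_n$, kills exactly the constants, so the same inductive down-solving as in Theorem \ref{thm:Structure} applies — but one must check that $\tau$, being $x\mapsto x^q$ rather than a scaling, still lets us solve coefficient-by-coefficient (it does: $\tau$ only increases $x$-adic valuation).

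Once $A_{\tau}$ is constant, I would run the consistency-equation argument verbatim: writing $A_{\sigma}(x)=\sum_{n} B_n x^{n/s}$ (after possibly replacing $x$ by $x^{1/s}$) with $B_n\in M_r(\mathbb{C})$, the consistency relation $\sigma(A_{\tau})A_{\sigma} = \tau(A_{\sigma})A_{\tau}$ becomes $A_{\tau}A_{\sigma}(x) = A_{\sigma}(x^q)A_{\tau}$, i.e. $A_{\tau} B_n A_{\tau}^{-1} x^{n/s} $ matches $B_n x^{qn/s}$, forcing $B_n = 0$ for $n\ne 0$ (the exponents $n/s$ and $qn/s$ agree only at $n=0$). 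Hence $A_{\sigma}$ is constant too and $M_0 = \bigoplus \mathbb{C} e_i$ is a $\mathbb{C}$-structure. For \emph{uniqueness}, I would invoke the argument already sketched in remark (iii) after the Main Theorem and the proof of Proposition \ref{prop:Modules_and_equations}: if $M = \widetilde{K}\otimes_{\mathbb{C}} M_0 = \widetilde{K}\otimes_{\mathbb{C}} M_0'$, a horizontal-vector / power-series-coefficient comparison using $\sigma(x)=x^p$ shows that any $\Gamma$-morphism between constant modules after extension of scalars is defined over $\mathbb{C}$; concretely, a matrix $C\in GL_r(\widetilde{k})$ with $\sigma(C)^{-1}A_{\sigma}^0 C = A_{\sigma}^{0\prime}$ and similarly for $\tau$ must have constant entries, by expanding $C=\sum v_n x^{n/s}$ and using that $\sigma(x^{n/s})=x^{pn/s}$ kills all $n\ne 0$. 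I expect the main obstacle to be the normal-form step — verifying carefully that the down-solving for the $h_i$ goes through with $\tau$ acting as $x\mapsto x^q$, in particular that the Mahler analogue of Theorem \ref{thm:Structure} is strong enough here even though (as the paper notes) we lack a refined description of the off-diagonal entries; the saving grace is that we only need the diagonal to be constant, which Theorem \ref{thm:Formal q-Mahler filtration} already provides, and then $\Phi_{\sigma}$-commutativity does the rest.
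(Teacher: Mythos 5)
The first and last parts of your plan (uniqueness, and the deduction that $A_{\sigma}$ is constant once $A_{\tau}$ is) match the paper and are fine. The gap is in what you call the heart of the argument: making $[\Phi_{\tau}]$ itself constant by down-solving the twisted equations $c_{n}h_{i}-c_{i}\tau(h_{i})=f_{i}-a_{in}$ with $\tau(x)=x^{q}$. Your claim that this operator is invertible modulo constants because ``$\tau$ only increases $x$-adic valuation'' is only correct on $\mathbb{C}[[x^{1/s}]]$. The off-diagonal entries $f_{i}$ produced by Theorem \ref{thm:Formal q-Mahler filtration} live in $\widetilde{k}$ and may have poles, and on the polar part $\tau$ multiplies the (negative) valuation by $q$, making it \emph{more} negative. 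The formal solution $h=-c_{i}^{-1}\sum_{k\ge1}(c_{n}/c_{i})^{k-1}f(x^{1/q^{k}})$ then has support accumulating at $0$ from below: it is a Hahn series, not a Puiseux series, so the equation is \emph{not} solvable in $\widetilde{k}$. If your normal-form step worked as stated, it would prove that every $q$-Mahler module over $\widetilde{k}$ descends to $\mathbb{C}$, which the paper explicitly says is false (and which is why only the weaker filtration statement, Theorem \ref{thm:Formal q-Mahler filtration}, is available in the Mahler case, in contrast to Theorem \ref{thm:Structure}).

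This is precisely the point where the paper's proof genuinely needs the second operator, and not merely via the commutation bookkeeping you describe. After obtaining a common filtration (a lower-triangular pair $(A,B)$ with constant diagonal, using a common $\Phi_{\tau}$-eigenvector that is also a $\Phi_{\sigma}$-eigenvector), the paper uses the consistency equation coupling the polar parts of $A_{21}$ and $B_{21}$: the lowest-order terms satisfy $qn=pm$ and $MB_{11}=NA_{11}$, which permits an explicit gauge transformation $C(x)=I-MA_{11}^{-1}x^{-m/q}(\text{lower block})$ lowering \emph{both} pole orders simultaneously, so the pole-killing terminates in finitely many steps (with an extra twist when $\gcd(p,q)>1$). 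Only after the polar parts are gone does the coefficient-by-coefficient solving you propose go through. So your outline needs to be replaced, in its central step, by this two-operator pole-elimination argument; the single-operator reduction cannot be salvaged over $\widetilde{k}$.
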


\begin{proof}
In terms of matrices, we have to show that any two matrices $A=A_{\tau}$
and $B=A_{\sigma}$ in $GL_{r}(\widetilde{k})$ satisfying the consistency
condition
\[
\sigma(A)B=\tau(B)A
\]
are gauge-equivalent to a pair of commuting constant matrices, unique
up to conjugation.

The uniqueness is easy. Suppose $(A,B)$ is a commuting pair of constant
matrices and $C\in GL_{r}(\widetilde{k})$ is such that
\[
(A',B')=(\tau(C)^{-1}AC,\sigma(C)^{-1}BC)
\]
are also constant. Replacing $x$ by some $x^{s}$ we may assume that
the entries of $C$ are all in $k$. Then 
\[
C=A^{-1}\tau(C)A'=A^{-2}\tau^{2}(C)A'^{2}=\cdots
\]
so $C\in GL_{r}(\mathbb{C})$, because its Laurent expansion is supported
in degrees divisible by $q^{n}$ for every $n.$

We next remark that if $(A,B)$ is a consistent pair in $GL_{r}(\widetilde{k})$
with $A\in GL_{r}(\mathbb{C})$ then $B\in GL_{r}(\mathbb{C})$ as
well. Indeed, if $A$ is constant the consistency equation takes the
form
\[
AB=\tau(B)A.
\]
Under a change of variable we may assume that the entries of $B$
are all in $k$. As above, this yields
\[
B=A^{-1}\tau(B)A=A{}^{-2}\tau^{2}(B)A^{2}=\cdots
\]
so $B\in GL_{r}(\mathbb{C})$.

Let $M$ be a $(p,q)$-Mahler module over $\widetilde{k}$. Theorem
\ref{thm:Formal q-Mahler filtration} guarantees that for some $c\in\mathbb{C}^{\times}$
the space
\[
W=\{v\in M|\,\Phi_{\tau}v=cv\}
\]
is non-zero. It is easily seen that vectors in $W$ which are linearly
independent over $\mathbb{C}$ are also linearly independent over
$\widetilde{k}.$ Indeed, if $\sum_{i=1}^{m}a_{i}v_{i}=0$ is a shortest
linear dependence over $\widetilde{k}$ between some $\mathbb{C}$-independent
vectors in $W$, with $a_{1}=1,$ apply $\Phi_{\tau}$ to get (after
dividing by $c$) $\sum_{i=1}^{m}\tau(a_{i})v_{i}=0$. This shows
that all $\tau(a_{i})=a_{i},$ hence $a_{i}\in\mathbb{C}$, or else
we get by subtraction a shorter linear dependence. But this contradicts
the linear independence of the $v_{i}$ over $\mathbb{C}$. It follows
that $W$ is finite dimensional over $\mathbb{C}.$ It is evidently
preserved by $\Phi_{\sigma}.$ Thus we may find an eigenvector $e\in W$
for $\Phi_{\sigma},$ namely $\Phi_{\sigma}(e)=de,$ for $d\in\mathbb{C}^{\times}.$
This means that $\widetilde{k}e=M_{1}$ is a rank-1 $(p,q)$-Mahler
submodule of $M.$ Continuing in this way with $M/M_{1}$ etc. we
arrive at a filtration of $M$ by $(p,q)$-Mahler submodules, whose
graded pieces are of rank 1 and admit a $\mathbb{C}$-structure. 

In terms of the matrices $A,B$ with which we started, this means
that we may assume that they are lower triangular, with diagonal entries
in $\mathbb{C}^{\times}.$ It remains to prove that they are gauge-equivalent
to a lower triangular pair $(A',B')$ with $A'$ constant. As mentioned
above, the fact that $B'$ is also constant will follow suit. Write

\[
A=\left(\begin{array}{cc}
A_{11} & 0\\
A_{21} & A_{22}
\end{array}\right),\;\;\;B=\left(\begin{array}{cc}
B_{11} & 0\\
B_{21} & B_{22}
\end{array}\right)
\]
with $A_{11}\in\mathbb{C}^{\times}$, $A_{21}\in M_{r-1,1}(\widetilde{k})$,
$A_{22}\in GL_{r-1,r-1}(\widetilde{k})$, and similarly for $B$.
The consistency equation for $A$ and $B$ implies the same equation
for $A_{22}$ and $B_{22}$. Hence by induction we may assume that
$A_{22}$ and $B_{22}$ are constant lower triangular. It remains
to descend the constants in $A_{21}$.

The consistency equation now takes the form
\begin{equation}
A_{21}(x^{p})B_{11}+A_{22}B_{21}(x)=B_{21}(x^{q})A_{11}+B_{22}A_{21}(x).\label{eq:A21consistency}
\end{equation}
After a change of variable we may assume that all the exponents appearing
in the equations are integers. We will show that if $A_{21}(x)$ or
$B_{21}(x)$ have a pole at 0, replacing the pair $(A,B)$ by an equivalent
pair, without affecting the diagonal blocks, we can reduce the order
of the pole, until we get rid of the polar parts altogether. To simplify
the argument we shall assume that $(p,q)=1.$ We shall explain how
to get rid of this assumption at the end of the proof.

Let $Mx^{-m}$ be the lowest term in $A_{21}(x)$ and $Nx^{-n}$ the
lowest term in $B_{21}(x)$ where $M,N\in M_{r-1,1}(\mathbb{C})$.
Assume that there is a pole, i.e. $n,m\ge1$ (otherwise there is nothing
to prove). Then looking at the lowest order terms in (\ref{eq:A21consistency})
gives $qn=pm$ and $MB_{11}=NA_{11}$. By our assumption that $(p,q)=1,$
$m/q=n/p$ is an integer. Let
\[
C(x)=\left(\begin{array}{cc}
I & 0\\
-MA_{11}^{-1}x^{-m/q} & I
\end{array}\right).
\]
Then the pair $(\widetilde{A},\widetilde{B})=(\tau(C)AC^{-1},\sigma(C)BC^{-1})$
has the same shape of $(A,B)$ with 
\begin{eqnarray*}
\widetilde{A}_{21}(x) & = & -Mx^{-m}+A_{21}(x)+A_{22}MA_{11}^{-1}x^{-m/q},
\end{eqnarray*}
\begin{eqnarray*}
\widetilde{B}_{21}(x) & = & -Nx^{-n}+B_{21}(x)+B_{22}NB_{11}^{-1}x^{-n/p}.
\end{eqnarray*}
The order of the poles of $\widetilde{A}$ and $\widetilde{B}$ is
smaller than their order in $A$ and $B$. Continuing inductively
we can eliminate the polar parts altogether.

We may therefore assume that the pair $(A,B)$ has no poles. To conclude
we need to solve the equation
\[
\widetilde{A}_{21}(x)=A_{21}(x)+C_{21}(x^{q})A_{11}-A_{22}C_{21}(x),
\]
for $C_{21}(x)$ so that $\widetilde{A}_{21}(x)$ is constant. Taking
the left hand side to be $A_{21}(0),$ we can find $C_{21}\in M_{r-1,1}(x\mathbb{C}[[x]])$
solving succesively for the coefficients of $x^{n}$, $n\geq1.$ This
concludes the proof under the assumption that $(p,q)=1.$

If $(p,q)=\ell>1$ the Laurent expansions of $\widetilde{A}_{21}$
and $\widetilde{B}_{21}$ which were constructed in the first step
might have a term with fractional degree, with denominator dividing
$\ell$ (the denominator in $m/q=n/p$). Ignore this issue and continue
inductively as before, each time removing the terms of lowest degrees.
As long as we have not reached the terms of degree $-m/q$ in $\widetilde{A}_{21}$,
the lowest terms in it will have integral degree $-m<-m'<-m/q$, and
we will be able to remove these lowest terms by a gauge transformation
as above, introducing a term with fractional degree with denominator
at worst $\ell,$ in degree $-m/q<-m'/q.$ Symmetrically, we may remove
all the polar part of $\widetilde{B}_{21}$ up to degree $-n/p$,
introducing at worst $\ell$ in the denominators of the exponents
of $x$. Once we reach the first fractional degree, we substitute
$\xi^{\ell}$ for $x,$ getting new power series with \emph{integral
degrees} in $\xi$ (i.e. matrix entries in $\mathbb{C}((\xi))$) satisfying
$(\ref{eq:A21consistency})).$ If $p=\ell p'$ and $q=\ell q'$ then
the lowest degree in the new $A_{21}(\xi)$ will be $-m/q'$ and the
lowest degree in the new $B_{21}$ will be $-n/p'.$ As at least one
of $p',q'$ is $>1$, we can continue by induction until we remove
all the polar part as before.
\end{proof}
\begin{rem}
A careful analysis of the proof of the theorem shows that the role
of the second Mahler operator $\Phi_{\sigma}$ in it was minor. It
was only used to guarantee that the process of reducing the matrix
$C$ to a constant matrix by means of gauge equivalence transformations
terminates after finitely many steps. If we replace the field of Puiseux
series by the field of Hahn series we can get rid of the polar parts
of the entries in $C$ in \emph{countably} many steps that yield a
\emph{convergent} Hahn series (with matrix coefficients). Once the
polar parts have been eliminated, the rest of the proof is the same.
Thus the above proof can be modified to prove the main theorem (Theorem
2) of \cite{Roq}, that any $q$-Mahler module over the field of Hahn
series descends to $\mathbb{C}.$ Moreover, if the original module
was defined over $\mathbb{C}((x))$ then the matrix $C$ needed to
descend its structure to $\mathbb{C}$ (i.e. to make $C(x^{q})^{-1}A(x)C(x)=A_{0}$
constant) would have entries in Hahn series whose supports are well-ordered
subsets of $\mathbb{Z}[1/q]$. In essence, this is the approach taken
by Julien Roques.

This remark should be contrasted with Theorem \ref{thm:2Q formal structure}.
In the case 2Q the second ($p$-difference) operator was used in a
more substantial way, to guarantee that the slopes of the first ($q$-difference)
operator were all $0$, and vice versa.
\end{rem}

\section{The structure of rational $\Gamma$-difference modules\label{sec:Rational}}

In this part we follow the method of \cite{Sch-Si1,Sch-Si2}.

\subsection{$(p,q)$-Mahler modules}

In this section we prove Theorem \ref{thm:Main Theorem} in the case
2M. Recall that
\[
\widetilde{K}=\bigcup_{s=1}^{\infty}\mathbb{C}(x^{1/s}),\,\,\,\widetilde{k}=\bigcup_{s=1}^{\infty}\mathbb{C}((x^{1/s})),
\]
and the two Mahler operators are
\[
\sigma(x^{1/s})=x^{p/s},\,\,\,\tau(x^{1/s})=x^{q/s}
\]
where $p$ and $q$ are multiplicatively independent natural numbers. 

Let $\Gamma=\left\langle \sigma,\tau\right\rangle \subset Aut(\widetilde{K})$
(or $Aut(\widetilde{k})$). Then $\Gamma$ is free abelian of rank
$2$. Let $M$ be a rank $r$ $\Gamma$-difference module over $\widetilde{K}$
(called also a $(p,q)$-Mahler module). Fix a basis of $M$ and let
$A=A_{\sigma}$ and $B=A_{\tau}$ be the matrices attached to $\Phi_{\sigma}$
and $\Phi_{\tau}$ in this basis as in \S\ref{subsec:Matrices}.
Changing variables, and writing $x$ for $x^{1/s}$ if necessary,
we may assume that $A,B\in GL_{r}(K),$ where $K=\mathbb{C}(x).$

Let $i=0,1$ or $\infty.$ Let $t_{0}=x,$ $t_{1}=x-1$ and $t_{\infty}=1/x$
be local parameters at the corresponding point. At the point $1$
we shall also use $z=\log(x)=\log(1+t_{1})$ as a formal parameter.
Let $k_{i}=\mathbb{C}((t_{i}))$ be the completion of $K=\mathbb{C}(x)$
at the point $i$. By base-change we may regard $M_{\widetilde{k}_{i}}$
for $i=0,\infty$ as formal $(p,q)$-Mahler modules over $\widetilde{k}_{i}$,
and $M_{k_{1}}$ as a formal $(p,q)$-difference module over $k_{1}.$
In the latter case we use the variable $z$, in terms of which $\sigma(z)=pz$
and $\tau(z)=qz.$
\begin{itemize}
\item \textbf{Step I.} After writing $x$ for $x^{1/s}$ if necessary, there
exist matrices $C_{i}\in GL_{r}(k_{i})$ ($i=0,1,\infty)$ such that
\[
\begin{cases}
\begin{array}{c}
\sigma(C_{i})^{-1}AC_{i}=A_{i}\\
\tau(C_{i})^{-1}BC_{i}=B_{i}
\end{array}\end{cases}
\]
are constant matrices.
\end{itemize}
By Theorems \ref{thm:2Q formal structure} and \ref{thm:Formal q-Mahler filtration}
there exist such matrices over $\widetilde{k}_{0},\widetilde{k}_{\infty}$
and $k_{1}.$ After a change of variable, substituting $x$ for $x^{1/s}$,
we may assume that $C_{0}$ and $C_{\infty}$ have entries in $k_{0}$
and $k_{\infty}.$ Such a move leaves the field $k_{1}$ unchanged.
\begin{itemize}
\item \textbf{Step II. }We may assume that $C_{i}\in GL_{r}(\mathcal{O}_{i})$
where $\mathcal{O}_{i}$ is the ring of integers of $k_{i},$ and
\[
C_{i}\equiv I\mod t_{i}
\]
\end{itemize}
A global gauge transformation replaces $(A,B)$ by $(\sigma(C){}^{-1}AC,\tau(C)^{-1}BC)$
and $C_{i}$ by $C^{-1}C_{i}$ for some $C\in GL_{r}(K).$ The constant
matrices $(A_{i},B_{i})$ are unchanged. By weak approximation in
the field $K$, we may find a $C\in GL_{r}(K)$ such that $C^{-1}C_{i}$
are, simultaneously, as close as we wish to $I\in GL_{r}(k_{i}),$
and in particular are in the open set $GL_{r}(\mathcal{O}_{i}),$
and congruent to $I$ modulo the maximal ideal.

Observe that once $C_{i}\in GL_{r}(\mathcal{O}_{i}),$ then also $A\in GL_{r}(\mathcal{O}_{i}).$
Since $A$ is meromorphic, it is holomorphic at the point $i$. The
same applies to the matrix $B$. Furthermore, the assumption $C_{i}\equiv I\mod t_{i}$
implies $A\equiv A_{i}\mod t_{i},$ $B\equiv B_{i}\mod t_{i}.$ 
\begin{itemize}
\item \textbf{Step III. }Each $C_{i}$ is holomorphic at some neighborhood
of the point $i=0,1,\infty.$
\end{itemize}
To prove this we use estimates on the coefficients in the formal Taylor
exapnsion. For example, at $i=0,$ write
\[
C_{0}(x)=M_{0}+M_{1}x+M_{2}x^{2}+\cdots,\,\,\,A(x)=A_{0}+N_{1}x+N_{2}x^{2}+\cdots
\]
($M_{0}=I$). From $A(x)C_{0}(x)=C_{0}(x^{p})A_{0}$ we get the recursion
formula
\[
A_{0}M_{n}+\sum_{i=1}^{n}N_{i}M_{n-i}=M_{n/p}A_{0}
\]
where $M_{n/p}=0$ if $p$ does not divide $n$. Let $||.||$ be any
norm on the space of $r\times r$ complex matrices (they are all equivalent).
The analyticity of $A(x)$ at $0$ implies that there exists a $c_{1}>0$
such that $||N_{i}||<c_{1}^{i}.$ It follows easily from this and
from the recursion formula that $||M_{i}||<c_{2}^{i}$ for some $c_{2}>0,$
hence that $C_{0}(x)$ converges absolutely in $|x|<c_{2}^{-1}.$
The point $i=\infty$ is treated similarly. 

At $i=1,$ using expansions in the local parameter $z=\log(x),$ we
have
\[
C_{1}(x)=C_{1}(e^{z})=\widetilde{C}_{1}(z)=M_{0}+M_{1}z+M_{2}z^{2}+\cdots,\,\,\,A(e^{z})=A_{1}+N_{1}z+N_{2}z^{2}+\cdots
\]
$(M_{0}=I).$ From $A(e^{z})\widetilde{C}_{1}(z)=\widetilde{C}_{1}(pz)A_{1}$
we now get the recursion formula
\[
M_{n}p^{n}-A_{1}M_{n}A_{1}^{-1}=\sum_{i=1}^{n}N_{i}M_{n-i}A_{1}^{-1}.
\]
Since for $n>>0$
\[
0.9p^{n}||M_{n}||\le||M_{n}p^{n}-A_{1}M_{n}A_{1}^{-1}||\le1.1p^{n}||M_{n}||,
\]
we may conclude the proof of step III as before.
\begin{itemize}
\item \textbf{Step IV. }The matrix $C_{0}(x)$ admits meromorphic continuation
to $0\le|x|<1,$ and the matrix $C_{\infty}(x)$ admits meromorphic
continuation to $1<|x|\le\infty.$
\end{itemize}
The functional equation $C_{0}(x)=A(x)^{-1}C_{0}(x^{p})A_{0}$ shows
that if $C_{0}(x)$ has meromorphic continuation to the disk $D(0,r)$
for some $r<1$, then it has such a meromorphic continuation to $D(0,r^{1/p}).$
Since for $r$ small enough $C_{0}(x)$ is in fact holomorphic in
$D(0,r),$ the claim follows. The same argument holds at $\infty.$
\begin{itemize}
\item \textbf{Step V.} The matrix $C_{0}(x)$ admits meromorphic continuation
to $0\le|x|<\infty.$ Similarly $C_{\infty}(x)$ admits meromorphic
continuation to $0<|x|\le\infty.$
\end{itemize}
Crossing the natural boundary at $|x|=1$ is subtle. This is where
the expansion around $i=1$ comes to our rescue. Recall that $C_{1}(x)$
is a-priori defined and analytic only in $|x-1|<\varepsilon$ for
some $0<\varepsilon$. Trying to use \emph{one} of the two functional
equations
\[
\begin{cases}
\begin{array}{c}
A_{1}=C_{1}(x^{p})^{-1}A(x)C_{1}(x)\\
B_{1}=C_{1}(x^{q})^{-1}B(x)C_{1}(x)
\end{array}\end{cases}
\]
to meromorphically continue it to $0<|x|<\infty$ as we did with $C_{0}$
or $C_{\infty}$ leads to issues of monodromy. The key idea, due to
\cite{Sch-Si1,Sch-Si2}, is to use \emph{both} functional equations
to overcome the monodromy. The arguments below constitute a slight
variation on the original arguments.

Write $x=e^{z}$ and define, for $i=0,1$,
\[
\widetilde{C}_{i}(z)=C_{i}(e^{z}).
\]
By the previous step, $\widetilde{C}_{0}(z)$ is meromorphic in $\{Re(z)<0\}$
and is $2\pi i$-periodic there, while $\widetilde{C}_{1}(z)$ is
a-priori defined and analytic only in a neighborhood of $z=0.$ The
functional equation
\[
A_{1}=\widetilde{C}_{1}(pz)^{-1}A(e^{z})\widetilde{C}_{1}(z)
\]
gives a meromorphic continuation of $\widetilde{C}_{1}(z)$ to all
$z\in\mathbb{C}.$

Let
\[
\widetilde{C}_{01}(z)=\widetilde{C}_{0}(z)^{-1}\widetilde{C}_{1}(z)\,\,\,\,(Re(z)<0).
\]
Then
\[
\begin{cases}
\begin{array}{c}
\widetilde{C}_{01}(pz)=A_{0}\widetilde{C}_{01}(z)A_{1}^{-1}\\
\widetilde{C}_{01}(qz)=B_{0}\widetilde{C}_{01}(z)B_{1}^{-1}.
\end{array}\end{cases}
\]

\begin{lem}
There exist $\pi/2<\alpha<\beta<3\pi/2$ such that $\widetilde{C}_{01}(z)$
is analytic in the sector $\{\alpha<Arg(z)<\beta\}.$
\end{lem}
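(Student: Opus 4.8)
The plan is to prove the a priori stronger statement that $\widetilde{C}_{01}$ is holomorphic on the whole open left half-plane $\Omega=\{Re(z)<0\}$, which is exactly the sector $\{\pi/2<Arg(z)<3\pi/2\}$; the lemma then follows for any $\pi/2<\alpha<\beta<3\pi/2$.

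First I would record that $\widetilde{C}_{01}=\widetilde{C}_{0}^{-1}\widetilde{C}_{1}$ is a meromorphic matrix-valued function on the connected open set $\Omega$. Indeed, $\widetilde{C}_{1}$ is meromorphic on all of $\mathbb{C}$ by the paragraph preceding the lemma, while $\widetilde{C}_{0}$ is meromorphic on $\Omega$ and satisfies $\det\widetilde{C}_{0}\not\equiv 0$ there, because $C_{0}\equiv I\bmod x$ forces $\widetilde{C}_{0}(z)\to I$ as $Re(z)\to-\infty$; hence $\widetilde{C}_{0}^{-1}$ is meromorphic on $\Omega$ as well. Let $S\subset\Omega$ be the pole set of $\widetilde{C}_{01}$; being the pole set of a meromorphic function on $\Omega$, it has no accumulation point inside $\Omega$.

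Next I would exploit the two functional equations $\widetilde{C}_{01}(pz)=A_{0}\widetilde{C}_{01}(z)A_{1}^{-1}$ and $\widetilde{C}_{01}(qz)=B_{0}\widetilde{C}_{01}(z)B_{1}^{-1}$, in which $A_{0},A_{1},B_{0},B_{1}\in GL_{r}(\mathbb{C})$ are \emph{constant} invertible matrices. Left and right multiplication by fixed invertible constant matrices changes neither the location nor the order of a pole, and the dilations $z\mapsto p^{\pm1}z$, $z\mapsto q^{\pm1}z$ all carry $\Omega$ into itself because $p,q>0$; so these equations, read in both directions, show that $S$ is invariant under the group $\{z\mapsto p^{a}q^{b}z:a,b\in\mathbb{Z}\}$.

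Finally I would derive a contradiction from $S\neq\emptyset$. Pick $z_{0}\in S$; then $z_{0}\neq 0$ (as $0\notin\Omega$) and the whole orbit $\{p^{a}q^{b}z_{0}:a,b\in\mathbb{Z}\}$ lies in $S$. Since $p$ and $q$ are multiplicatively independent natural numbers, $\{p^{a}q^{b}:a,b\in\mathbb{Z}\}$ is dense in $(0,\infty)$, so this orbit is dense in the open ray $\{rz_{0}:r>0\}$, which is contained in $\Omega$. Then every point of that ray is an accumulation point of $S$ inside $\Omega$, contradicting discreteness. Hence $S=\emptyset$, i.e. $\widetilde{C}_{01}$ is holomorphic on $\Omega=\{\pi/2<Arg(z)<3\pi/2\}$, and the lemma holds a fortiori. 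I expect the only step needing genuine care to be the implicit claim that the two functional equations for $\widetilde{C}_{01}$ --- which are derived from the functional equations for $C_{0}$ and $C_{1}$, a priori only on a smaller open set --- are valid on all of $\Omega$; this is a routine invocation of the identity theorem, since both sides are meromorphic on the connected set $\Omega$ (respectively on $\mathbb{C}$) and agree on an open subset.
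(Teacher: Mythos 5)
Your argument is correct, and it in fact proves something stronger than the lemma: that $\widetilde{C}_{01}(z)$ has no poles anywhere in the left half-plane $\{Re(z)<0\}$. The paper's own proof is shorter and uses only the $p$-equation: since the pole set is discrete in $\{Re(z)<0\}$, one can choose $\pi/2<\alpha<\beta<3\pi/2$ so that the compact annular sector $\{p^{-1}\le|z|\le p,\ \alpha\le Arg(z)\le\beta\}$ is pole-free (only finitely many poles lie in it before shrinking $(\alpha,\beta)$ to avoid their arguments); this region is a fundamental domain for $z\mapsto pz$ acting on the sector $\{\alpha<Arg(z)<\beta\}$, so the single relation $\widetilde{C}_{01}(pz)=A_{0}\widetilde{C}_{01}(z)A_{1}^{-1}$ propagates analyticity to the whole sector. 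You instead invoke both dilations and the multiplicative independence of $p$ and $q$ --- via the density of $\{p^{a}q^{b}:a,b\in\mathbb{Z}\}$ in $(0,\infty)$, which holds precisely because $\log p/\log q\notin\mathbb{Q}$ --- to show a nonempty pole set would accumulate along an entire ray inside the half-plane. Both routes are valid; yours yields a sharper conclusion with no choice of $\alpha,\beta$, at the cost of consuming the interaction between the two functional equations at this stage, whereas the paper reserves that interaction for the following lemma (the Fourier-expansion step). The two points you flag as needing care --- that $\widetilde{C}_{0}^{-1}$ is meromorphic on the half-plane because $\det C_{0}(0)=1\ne0$, and that the functional equations for $\widetilde{C}_{01}$ extend to all of $\{Re(z)<0\}$ by the identity theorem --- are exactly the right ones, and you handle them correctly.
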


\begin{proof}
Since the poles of $\widetilde{C}_{01}(z)$ have no accumulation point
in $\{Re(z)<0\},$ there are no poles in
\[
S=\{z|\,p^{-1}\le|z|\le p,\,\,\,\alpha<Arg(z)<\beta\},
\]
for suitable $\pi/2<\alpha<\beta<3\pi/2$. The relation $\widetilde{C}_{01}(pz)=A_{0}\widetilde{C}_{01}(z)A_{1}^{-1}$
now yields the lemma.
\end{proof}
\begin{lem}
$\widetilde{C}_{0}(z)$ has a meromorphic continuation to all $z\in\mathbb{C}$
and is $2\pi i$-periodic.
\end{lem}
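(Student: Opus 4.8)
The plan is to deduce the meromorphic continuation and the periodicity of $\widetilde{C}_{0}$ from the preceding Lemma (analyticity of $\widetilde{C}_{01}$ on a sector) together with the two functional equations $\widetilde{C}_{01}$ satisfies, the point being to carry analytic information across the natural boundary $\{Re(z)=0\}$, which corresponds to $|x|=1$.

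First I would reduce the statement to one about $\widetilde{C}_{01}$ alone. We already know that $\widetilde{C}_{1}$ is meromorphic on all of $\mathbb{C}$, and the identity $\widetilde{C}_{0}(z)=\widetilde{C}_{1}(z)\widetilde{C}_{01}(z)^{-1}$ holds on $\{Re(z)<0\}$. Hence it suffices to show that $\widetilde{C}_{01}$ extends to a meromorphic function on $\mathbb{C}$ whose determinant is not identically zero (equivalently, with meromorphic inverse). Granting this, $\widetilde{C}_{1}\widetilde{C}_{01}^{-1}$ is a meromorphic function on $\mathbb{C}$ agreeing with $\widetilde{C}_{0}$ on the connected open set $\{Re(z)<0\}$, hence it \emph{is} the meromorphic continuation of $\widetilde{C}_{0}$; and $2\pi i$-periodicity comes for free, since $z\mapsto\widetilde{C}_{0}(z)$ and $z\mapsto\widetilde{C}_{0}(z+2\pi i)$ are then both meromorphic on $\mathbb{C}$ and agree on $\{Re(z)<0\}$, hence everywhere by the identity theorem.

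The heart of the matter is the continuation of $\widetilde{C}_{01}$. By the previous Lemma it is holomorphic and $GL_{r}$-valued on the sector $S=\{\alpha<Arg(z)<\beta\}$, $\pi/2<\alpha<\beta<3\pi/2$, and it is meromorphic on $\{Re(z)<0\}$. I would propagate this using the relations $\widetilde{C}_{01}(pz)=A_{0}\widetilde{C}_{01}(z)A_{1}^{-1}$ and $\widetilde{C}_{01}(qz)=B_{0}\widetilde{C}_{01}(z)B_{1}^{-1}$ — in which $A_{0}$ commutes with $B_{0}$ and $A_{1}$ with $B_{1}$, because $\Gamma$ is abelian — together with the relation $\widetilde{C}_{01}(z+2\pi i)=\widetilde{C}_{01}(z)\,\widetilde{C}_{1}(z)^{-1}\widetilde{C}_{1}(z+2\pi i)$ on $\{Re(z)<0\}$, which follows from the periodicity of $\widetilde{C}_{0}$ and the global meromorphy of $\widetilde{C}_{1}$. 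Iterating the first two relations gives $\widetilde{C}_{01}(p^{n}q^{m}z)=A_{0}^{n}B_{0}^{m}\,\widetilde{C}_{01}(z)\,B_{1}^{-m}A_{1}^{-n}$ for all $n,m\in\mathbb{Z}$, and since $p$ and $q$ are multiplicatively independent the numbers $p^{n}q^{m}$ are dense in $\mathbb{R}_{>0}$, which rigidly links the values of $\widetilde{C}_{01}$ along any ray; combined with the $2\pi i$-relation and the analyticity on $S$, this is what lets one continue across $\{Re(z)=0\}$ — exactly the "use \emph{both} functional equations to overcome the monodromy" device of \cite{Sch-Si1,Sch-Si2}. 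I expect the main obstacle to lie precisely here: one must check that the continuation produced this way is \emph{single-valued} and genuinely meromorphic on all of $\mathbb{C}$, with its poles not accumulating along $\{Re(z)=0\}$ and its inverse meromorphic as well; this needs control on the growth of $\widetilde{C}_{01}$ on $S$ via the functional equations and a verification that the pieces coming from the various translates $S+2\pi ik$ glue consistently. Once this is in place, setting $\widetilde{C}_{0}:=\widetilde{C}_{1}\widetilde{C}_{01}^{-1}$ and invoking the reduction of the first paragraph completes the proof.
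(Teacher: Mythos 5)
Your reduction in the first paragraph is essentially the paper's: continuation of $\widetilde{C}_{01}$ plus the identity $\widetilde{C}_{0}=\widetilde{C}_{1}\widetilde{C}_{01}^{-1}$ and the identity theorem for the periodicity. (One small correction: you should not ask for $\widetilde{C}_{01}$ to be meromorphic on all of $\mathbb{C}$ --- because of monodromy it is only continued to the slit plane $\mathbb{C}\setminus[0,\infty)$; one then gets $\widetilde{C}_{0}$ on the slit plane and extends it across $[0,\infty)$ by its $2\pi i$-periodicity, which it inherits from $\{Re(z)<0\}$.)

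The second paragraph, however, contains a genuine gap, and you have in effect flagged it yourself. Density of $\{p^{n}q^{m}\}$ in $\mathbb{R}_{>0}$ does not produce an analytic continuation: the iterated relation $\widetilde{C}_{01}(p^{n}q^{m}z)=A_{0}^{n}B_{0}^{m}\widetilde{C}_{01}(z)B_{1}^{-m}A_{1}^{-n}$ only constrains values at points already reachable from the sector, and says nothing about analyticity, single-valuedness, or pole accumulation of any putative extension --- precisely the issues you defer. The missing device is the following. Set $D(w)=\widetilde{C}_{01}(e^{w})$ on the strip $\{\alpha<Im(w)<\beta\}$, so that the scaling relations become translation relations $D(w+\log p)=A_{0}D(w)A_{1}^{-1}$ and $D(w+\log q)=B_{0}D(w)B_{1}^{-1}$. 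Choosing logarithms $L_{0},L_{1}$ of $A_{0},A_{1}$ commuting with $B_{0},B_{1}$, the twist $E(w)=e^{-L_{0}w/\log p}D(w)e^{L_{1}w/\log p}$ becomes genuinely $\log p$-periodic, hence admits a Fourier expansion $E(w)=\sum_{n}E_{n}e^{2\pi inw/\log p}$ on the strip; the $\log q$-relation then forces $E_{n}e^{2\pi in\log q/\log p}=F_{0}E_{n}F_{1}^{-1}$, and since the conjugation operator has finitely many eigenvalues while the numbers $e^{2\pi in\log q/\log p}$ are pairwise distinct (multiplicative independence of $p$ and $q$), all but finitely many $E_{n}$ vanish. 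Thus $E$, hence $D$, is entire, and $\widetilde{C}_{01}(z)=D(\log z)$ continues analytically to $\mathbb{C}\setminus[0,\infty)$. Without this (or an equivalent) mechanism your outline does not cross the natural boundary.
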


\begin{proof}
Assume that we prove
\begin{itemize}
\item $\widetilde{C}_{01}(z)$ has an analytic continuation to $\mathbb{C}\setminus[0,\infty).$
\end{itemize}
Then $\widetilde{C}_{0}(z)=\widetilde{C}_{1}(z)\widetilde{C}_{01}(z)^{-1}$
also admits a meromorphic continuation to $\mathbb{C}\setminus[0,\infty).$
In $\{Re(z)<0\},$ $\widetilde{C}_{0}(z)$ was $2\pi i$-periodic.
It is therefore $2\pi i$-periodic in the upper half plane, so extends
by periodicity to the whole complex plane, and the lemma is verified.

To prove that $\widetilde{C}_{01}(z)$ has an analytic continuation
to $\mathbb{C}\setminus[0,\infty)$ consider
\[
D(w)=\widetilde{C}_{01}(e^{w}),
\]
a-priori analytic in the strip $\{\alpha<Im(w)<\beta\}.$ It satisfies
there
\[
\begin{cases}
\begin{array}{c}
D(w+\log(p))=A_{0}D(w)A_{1}^{-1}\\
D(w+\log(q))=B_{0}D(w)B_{1}^{-1}
\end{array}.\end{cases}
\]
Recall that $A_{0}$ and $B_{0}$ commute, and so do $A_{1}$ and
$B_{1}.$ Let $L_{0}$ and $L_{1}$ be matrices commuting with $B_{0}$
and $B_{1}$ respectively, such that
\[
A_{0}=e^{L_{0}},\,\,\,A_{1}=e^{L_{1}}.
\]
Then
\[
E(w)=e^{-L_{0}w/\log p}D(w)e^{L_{1}w/\log p}
\]
satisfies

\[
\begin{cases}
\begin{array}{c}
E(w+\log(p))=E(w)\\
E(w+\log(q))=F_{0}E(w)F_{1}^{-1}
\end{array},\end{cases}
\]
with
\[
F_{i}=B_{i}e^{-L_{i}\log q/\log p}.
\]
The first equation implies that in the strip $\{\alpha<Im(w)<\beta\}$
we have a Fourier expansion
\[
E(w)=\sum_{n\in\mathbb{Z}}E_{n}e^{2\pi inw/\log p}
\]
with constant matrices $E_{n}.$ The second equation then implies
\[
E_{n}e^{2\pi in\log q/\log p}=F_{0}E_{n}F_{1}^{-1}.
\]
As the linear transformation $M\mapsto F_{0}MF_{1}^{-1}$ on the space
of $r\times r$ matrices can have only finitely many eigenvalues,
and as the numbers $e^{2\pi in\log q/\log p}$ are all distinct (thanks
to the multiplicative independence of $p$ and $q$), we deduce that
$E_{n}=0$ for all but finitely many $n.$ This shows that $E(w),$
and with it $D(w),$ are entire.

Going back to the definition of $D(w)$ we conclude that $\widetilde{C}_{01}(z)=D(\log(z))$,
a-priori only analytic in the sector $\alpha<Arg(z)<\beta$, has an
analytic continuation to $\mathbb{C}\setminus[0,\infty)$. Here $\log(z)$
denotes the principal branch of $\log$ on the complement of $[0,\infty).$
This concludes the proof of the lemma.
\end{proof}
The Lemma clearly implies that $C_{0}(x)$ may be meromorphically
continued all the way to the north pole. The matrix $C_{\infty}(x)$
is treated in the same way. The proof of Theorem \ref{thm:Main Theorem}
is concluded with the following (last) step.
\begin{itemize}
\item \textbf{Step VI. }The matrix $C_{0}(x)\in GL_{r}(K).$
\end{itemize}
Consider $C_{0\infty}(x)=C_{0}(x)^{-1}C_{\infty}(x).$ This matrix
is meromorphic in $0<|x|<\infty$ and satisfies
\[
C_{0\infty}(x^{p})=A_{0}C_{0\infty}(x)A_{\infty}^{-1}.
\]
In any annulus $V(r_{1},r_{2})=\{r_{1}<|x|<r_{2}\}$ where $C_{0\infty}(x)$
is analytic, it has a power series expansion
\[
C_{0\infty}(x)=\sum_{n\in\mathbb{Z}}M_{n}x^{n}.
\]
The functional equation relates the expansions on $V(r_{1},r_{2})$
and $V(r_{1}^{p},r_{2}^{p}),$ where $C_{0\infty}(x)$ is also analytic,
and shows that $M_{n}=0$ unless $p|n$. Iterating, we see that $M_{n}=0$
unless $p^{k}|n$ for $k=1,2,\dots.$ It follows that the only non-zero
coefficient is $M_{0}$ and $C_{0\infty}$ is constant. Since $C_{\infty}(x)$
is meromorphic (in fact analytic) at $\infty,$ $C_{0}(x)$ is meromorphic
everywhere on $\mathbb{P}^{1}(\mathbb{C})$, hence is a matrix of
rational functions.

\subsection{$(p,q)$-difference modules\label{subsec:(p,q)-difference modules}}

In this section we prove Theorem \ref{thm:Main Theorem} in the case
2Q. Recall that $K=\mathbb{C}(x)$ and the two difference operators
are
\[
\sigma(x)=px,\,\,\,\tau(x)=qx,
\]
where $p$ and $q$ are multiplicatively independent non-zero complex
numbers. We make the following assumption:
\begin{itemize}
\item (Hyp) At least one of $p$ or $q$ is of absolute value $\ne1.$
\end{itemize}
Without loss of generality (replacing $\sigma$ by $\sigma^{-1}$
or by $\tau^{\pm1}$, and afterwards replacing $\tau$ by $\tau^{-1}$,
if necessary), we may assume that $|p|>1$ and $|q|\ge1.$ At the
end of the proof we shall explain how to eliminate (Hyp). 

Let $\Gamma=\left\langle \sigma,\tau\right\rangle \subset Aut(K)$.
Then $\Gamma$ is free abelian of rank $2$. Let $M$ be a rank $r$
$\Gamma$-difference module over $K$ (called also a $(p,q)$-difference
module). Fix a basis of $M$ and let $A=A_{\sigma}$ and $B=A_{\tau}$
be the matrices attached to $\Phi_{\sigma}$ and $\Phi_{\tau}$ in
this basis as in \S\ref{subsec:Matrices}.

For $i=0$ or $\infty$ let $t_{0}=x$ and $t_{\infty}=1/x$ be local
parameters at the point $i$. Let $k_{i}=\mathbb{C}((t_{i}))$ be
the completion of $K=\mathbb{C}(x)$ at the point $i$. By base-change
we may regard $M_{k_{i}}$ for $i=0,\infty$ as formal $(p,q)$-diffrence
modules over $k_{i}.$ The proofs of the first three steps below are
exactly the same as in the case 2M, so we omit them.
\begin{itemize}
\item \textbf{Step I. }There exist matrices $C_{i}\in GL_{r}(k_{i})$ ($i=0,\infty)$
such that
\[
\begin{cases}
\begin{array}{c}
\sigma(C_{i})^{-1}AC_{i}=A_{i}\\
\tau(C_{i})^{-1}BC_{i}=B_{i}
\end{array}\end{cases}
\]
are constant matrices.
\item \textbf{Step II. }We may assume that $C_{i}\in GL_{r}(\mathcal{O}_{i})$
where $\mathcal{O}_{i}$ is the ring of integers of $k_{i},$ and
\[
C_{i}\equiv I\mod t_{i}.
\]
\item \textbf{Step III. }$C_{i}$ is holomorphic at some neighborhood of
the point $i=0,\infty.$
\item \textbf{Step IV. }The matrix $C_{0}(x)$ admits meromorphic continuation
to $0\le|x|<\infty,$ and the matrix $C_{\infty}(x)$ admits meromorphic
continuation to $0<|x|\le\infty.$
\end{itemize}
As before, we use the functional equation $C_{0}(px)=A(x)C_{0}(x)A_{0}^{-1}$
to meromorphically continue $C_{0}(x)$ from $D(0,r)$ to $D(0,pr).$
Here the assumption $|p|>1$ is used. A similar argument works for
$C_{\infty}(x).$
\begin{itemize}
\item \textbf{Step V.} The matrix $C_{0}(x)\in GL_{r}(K).$
\end{itemize}
Consider the functional equations
\[
\begin{cases}
\begin{array}{c}
C_{0}(px)=A(x)C_{0}(x)A_{0}^{-1}\\
C_{0}(qx)=B(x)C_{0}(x)B_{0}^{-1}
\end{array} & .\end{cases}
\]
Let $R$ be large enough so that $A(x)$ and $B(x)$ and their inverses
have no poles in $U=\{R<|x|<\infty\}.$ Let $S$ be the set of poles
of $C_{0}(x)$ in $U$. The functional equations imply that if $x$
and $px$, or $x$ and $qx$, are both in $U,$ then they are either
both in $S$ or both not in $S.$ As $p$ and $q$ are multiplicatively
independent, we see that if $S$ is not empty then for a suitable
$R'$ the compact subset $Z=\{R'\le|x|\le p^{2}R'\}\subset U$ contains
infinitely many distinct points of the form $p^{-a}q^{b}x_{0}$ for
some $x_{0}\in S$ and $a,b\ge0.$ Indeed, if $|q|=1$ we may take
the points $q^{b}x_{0}$ where $R'$ is chosen so that $Z\cap S$
is non-empty, and $x_{0}\in Z\cap S.$ If $|q|>1$ we take $q^{b}x_{0}$
($b\ge0$) and then find for each $b$ an $a$ such that $p^{-a}q^{b}x_{0}\in Z.$
This implies however that $Z$ contains infinitely many points in
$S$. It follows that $S$ is empty, and $C_{0}(x)$ is analytic in
$U.$

Consider, as in the case 2M, the function $C_{0\infty}(x)=C_{0}(x)^{-1}C_{\infty}(x).$
By choosing $R$ large enough we see that $C_{0\infty}(x)$ is analytic
in $U,$ so admits there a power series expansion
\[
C_{0\infty}(x)=\sum_{n\in\mathbb{Z}}M_{n}x^{n}.
\]
Furthermore, it satisfies in $U$ the functional equation
\[
C_{0\infty}(px)=A_{0}C_{0\infty}(x)A_{\infty}^{-1},
\]
implying $p^{n}M_{n}=A_{0}M_{n}A_{\infty}^{-1}.$ As the linear transformation
$M\mapsto A_{0}MA_{\infty}^{-1}$ can have only finitely many eigenvalues,
$M_{n}=0$ for all but finitely many values of $n$. It follows that
$C_{0\infty}(x)$, and with it $C_{0}(x),$ is meromorphic at $\infty.$
Thus the entries of $C_{0}(x)$ are everywhere meromorphic on $\mathbb{P}^{1}(\mathbb{C}),$
so belong to $K=\mathbb{C}(x).$ This concludes the proof of the last
step, and with it of the main theorem, under the assumption (Hyp).
\begin{itemize}
\item \textbf{Step VI. }Elimination of the assumption (Hyp).
\end{itemize}
As we have seen in Remark (iv) following Theorem \ref{thm:Main Theorem},
while the proof of Step IV above used the dynamics of $z\mapsto pz$
(namely the fact that by iterating this map an arbitrarily small open
neighborhood of $0$ eventually covered the whole of $\mathbb{C}$),
the statement of the Main Theorem is purely algebraic. Thus (Hyp)
can be weakened to assume that \emph{under some abstract automorphism}
$\iota$ of $\mathbb{C}$ one of $\iota(p)$ or $\iota(q)$ does not
lie on the unit circle. There are still algebraic numbers for which
this can not be achieved. For example, if $E$ is a CM field and $p=P/\overline{P},$
$q=Q/\overline{Q}$ for some $P,Q\in E$ (it is an easy exercise that
we can make such $p$ and $q$ multiplicatively independent).

However, let $\ell$ be an auxiliary rational prime, let $\mathbb{\mathbb{C}_{\ell}}$
be the completion of an algebraic closure of $\mathbb{Q}_{\ell}$,
and consider an abstract algebraic isomorphism
\[
\iota:\mathbb{C}\simeq\mathbb{C}_{\ell}.
\]
Such a $\iota$ exists because both fields have the same transcendence
cardinality and are algebraically closed. Now, the entire proof given
above works, \emph{mutatis mutandis, }over $\mathbb{C}_{\ell}$ instead
of $\mathbb{C},$ provided (Hyp) is replaced by (Hyp$_{\ell}$): \emph{At
least one of $\iota(p)$ or $\iota(q)$ is of absolute value $\ne1$.
}One should understand ``analytic'' or ``meromorphic'' in the
rigid analytic sense. Note that the only step where Calculus was used
was Step III, and this step becomes even easier over $\mathbb{C}_{\ell}$
thanks to the ultrametric inequality.

It follows that the only case not covered by the above proof is when
$p$, and similarly $q$, maps to the unit circle under any field
isomorphism $\iota:\mathbb{C}\simeq\mathbb{C}_{\ell}$ for any prime
$\ell,$ including $\infty$. It is well-known that this happens if
and only if $p$ and $q$ are both roots of unity, a case ruled out
by the assumption on multiplicative independence.

\section{$p$-Mahler $q$-difference modules\label{sec:-1M1Q}}

In this part we illustrate the same approach used in cases 2M and
2Q in a third example, where the group $\Gamma$ is generated by one
$q$-difference operator and one Mahler operator, and turns out to
be generalized dihedral. We therefore call this \emph{Case} 1M1Q.

\subsection{Formal $p$-Mahler $q$-difference modules}

\subsubsection{The group $\Gamma$}

Let $\widetilde{K}$ be as before, let $p\ge2$ be a natural number
and $q\in\mathbb{C}^{\times}$ a complex number which is not a root
of unity. No assumption of independence is made on $p$ and $q.$
Fix a compatible sequence of roots $q^{1/s}$ as before.

Let $\Gamma=\left\langle \sigma,\tau\right\rangle \subset Aut(\widetilde{K})$
where
\[
\sigma(x^{1/s})=x^{p/s},\,\,\,\tau(x^{1/s})=q^{1/s}x^{1/s}.
\]
The easily verified relation
\[
\sigma^{-1}\circ\tau\circ\sigma=\tau^{p}
\]
yields
\[
\Gamma\simeq\mathbb{Z}\ltimes\mathbb{Z}[\frac{1}{p}]
\]
where $(-n,0)(0,a)(n,0)=(0,p^{n}a)$. Here $\sigma\mapsto(1,0)$ and
$\tau\mapsto(0,1).$ Thus $\Gamma$ is \emph{generalized dihedral}
rather than abelian.
\begin{lem}
Every element of $\Gamma$ is of the form $\sigma^{a}\tau^{b}\sigma^{c}$
for $a,b,c\in\mathbb{Z}.$
\end{lem}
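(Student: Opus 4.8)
The plan is to read off a concrete normal form from the explicit isomorphism $\Gamma\simeq\mathbb{Z}\ltimes\mathbb{Z}[\frac{1}{p}]$ just recorded, under which $\sigma\mapsto(1,0)$ and $\tau\mapsto(0,1)$. First I would pin down the multiplication law of this semidirect product from the stated conjugation rule: since $(-n,0)(0,a)(n,0)=(0,p^{n}a)$, the automorphism of $\mathbb{Z}[\frac{1}{p}]$ attached to $(n,0)$ is multiplication by $p^{-n}$, so $(n_{1},a_{1})(n_{2},a_{2})=(n_{1}+n_{2},\,a_{1}+p^{-n_{1}}a_{2})$. In particular, for integers $j,b,\ell$ a short computation gives $\sigma^{j}\tau^{b}\sigma^{\ell}=(j,0)(0,b)(\ell,0)=(j+\ell,\,p^{-j}b)$.

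Now take an arbitrary $\gamma\in\Gamma$ and write $\gamma=(n,a)$ with $n\in\mathbb{Z}$ and $a\in\mathbb{Z}[\frac{1}{p}]$. By the definition of $\mathbb{Z}[\frac{1}{p}]$ we may write $a=m\,p^{-k}$ with $m\in\mathbb{Z}$ and $k\ge 0$. Choosing $j=k$, $b=m$, $\ell=n-k$ in the formula above yields $\sigma^{k}\tau^{m}\sigma^{n-k}=(n,\,p^{-k}m)=(n,a)=\gamma$, which is the asserted expression $\sigma^{a}\tau^{b}\sigma^{c}$ (with $(a,b,c)=(k,m,n-k)$).

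A slightly more intrinsic variant, which I would mention as an alternative, is this: let $N\trianglelefteq\Gamma$ be the subgroup generated by all $\sigma^{-n}\tau\sigma^{n}$, $n\in\mathbb{Z}$; it is the normal factor corresponding to $\mathbb{Z}[\frac{1}{p}]$, and $\sigma^{k}\tau^{m}\sigma^{-k}$ corresponds to $m p^{-k}$. Since every element of $\mathbb{Z}[\frac{1}{p}]$ has the form $m p^{-k}$ with $k\ge 0$, every element of $N$ equals $\sigma^{k}\tau^{m}\sigma^{-k}$ for suitable $k,m$; and a general $\gamma$ of $\sigma$-degree $d$ (i.e.\ mapping to $d$ under $\Gamma\to\Gamma/N\simeq\mathbb{Z}$) factors as $\gamma=(\gamma\sigma^{-d})\sigma^{d}=\sigma^{k}\tau^{m}\sigma^{-k}\sigma^{d}=\sigma^{k}\tau^{m}\sigma^{d-k}$.

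There is no real obstacle: this is a routine normal-form statement once the structure of $\Gamma$ is in hand. The only point needing (minor) care is that the middle exponent can be taken an honest integer $b\in\mathbb{Z}$ rather than merely an element of $\mathbb{Z}[\frac{1}{p}]$ --- and this is exactly what conjugating $\tau$ by a sufficiently high \emph{positive} power of $\sigma$, so as to clear the denominator of $a$, accomplishes. Trying instead to reduce an arbitrary word in $\sigma^{\pm1},\tau^{\pm1}$ combinatorially would be mildly awkward, since $\tau\sigma=\sigma\tau^{p}$ produces integral $\tau$-exponents when one pushes $\tau$ rightward past $\sigma$ but fractional ones in the opposite direction; working in the semidirect-product coordinates sidesteps this bookkeeping.
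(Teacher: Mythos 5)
Your proof is correct and follows essentially the same route as the paper: the paper writes an arbitrary element as $\sigma^{i}\tau^{j/p^{n}}$ and observes $\sigma^{i}\tau^{j/p^{n}}=\sigma^{i+n}\tau^{j}\sigma^{-n}$, which is exactly your computation in semidirect-product coordinates. Your version merely spells out the multiplication law in more detail.
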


\begin{proof}
Every element of $\Gamma$ is of the form $\sigma^{i}\tau^{j/p^{n}}$
for some $i,j,n\in\mathbb{Z}.$ But
\[
\sigma^{i}\tau^{j/p^{n}}=\sigma^{i+n}\tau^{j}\sigma^{-n}.
\]
\end{proof}

\subsubsection{$p$-Mahler $q$-difference modules}

We shall call a $\Gamma$-difference module over $\widetilde{K}$
(or $\widetilde{k})$ a $p$-Mahler $q$-difference module. Let $f\in\widetilde{k}$,
and assume that the $\widetilde{K}\left\langle \Gamma\right\rangle $-submodule
of $\widetilde{k}$ generated by $f$ is finite dimensional over $\widetilde{K}.$
Then this module is a $p$-Mahler $q$-difference module over $\widetilde{K}$,
and arguments similar to those of Proposition \ref{prop:Modules_and_equations}
may be applied.

We label this new case by 1M1Q. Unlike cases 2M and 2Q, for $M=\widetilde{K}\left\langle \Gamma\right\rangle f$
to be finite dimensional over $\widetilde{K}$, the (necessary) condition
that $f$ satisfies both a $\sigma$-Mahler equation and a $\tau$-difference
equation is not sufficient. This is beacuse $\Gamma$ is not abelian
anymore.

The best we can say with regard to \emph{equations} is that since
every element of $\Gamma$ is of the form $\sigma^{a}\tau^{b}\sigma^{c}$
for $a,b,c\in\mathbb{Z},$ a \emph{finite} number of equations will
suffice to guarantee $\dim_{\widetilde{K}}M<\infty$. One will need,
for example, a $\sigma$-Mahler equation for $f$, say of degree $n,$
then for each $0\le c\le n-1$ a $\tau$-difference equation for $\sigma^{c}f,$
and if $m,$ say, is the maximum of the degrees of these equations,
for each $0\le c\le n-1$ and $0\le b\le m-1$ a $\sigma$-Mahler
equation for the power series $\tau^{b}\sigma^{c}f.$ This collection
of equations will guarantee that the elements $\sigma^{a}\tau^{b}\sigma^{c}f,$
for $a,b,c$ in a bounded range, will span $M$ over $\widetilde{K}.$

The discussion above makes it clear that for our generalized dihedral
$\Gamma,$ the natural condition for a result in the style of Theorems
\ref{thm:L-vdP} and \ref{thm:Bezivin} is the finite dimensionality
of $M=\widetilde{K}\left\langle \Gamma\right\rangle f$. Its formulation
in terms of equations can be cumbersome.

\subsubsection{Formal $p$-Mahler $q$-difference modules}
\begin{thm}
\label{thm:formal 1M1Q}Let $M$ be a $p$-Mahler $q$-difference
module over $\widetilde{k}$. Then $M$ has a unique $\mathbb{C}$-structure
$M_{0}$ preserved by both $\Phi_{\sigma}$ and $\Phi_{\tau}$, such
that $\Phi_{\tau}$ acts potentially unipotently on $M_{0}$.
\end{thm}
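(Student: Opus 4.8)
The plan is to imitate the structure of the proofs of Theorems \ref{thm:2Q formal structure} and \ref{lem:2M formal structure}, treating $M$ first as a $q$-difference module over $\widetilde{k}$ and then exploiting the presence of $\Phi_\sigma$ to kill the slopes. First I would apply Theorem \ref{thm:Structure}: after extending scalars (replacing $x$ by $x^{1/s}$), $M$ decomposes as $\bigoplus_i \widetilde{k}\otimes_{\mathbb C} N_i$ with $\Phi_\tau(1\otimes v)=x^{\lambda_i}\otimes\phi_i(v)$ on the slope-$\lambda_i$ piece. Now observe that $\Phi_\sigma$ does not commute with $\Phi_\tau$ but instead satisfies the braid relation coming from $\sigma^{-1}\tau\sigma=\tau^p$, i.e. $\Phi_\tau\Phi_\sigma=\Phi_\sigma\Phi_\tau^p$. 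If $v$ is a $\Phi_\tau$-eigenvector of eigenvalue $cx^\lambda$ (i.e.\ $\widetilde{k}v\simeq I_{\lambda,c}$ in the notation of the $q$-difference section), then applying the relation one computes $\Phi_\tau(\Phi_\sigma v)=\Phi_\sigma\Phi_\tau^p v=\Phi_\sigma(q^{\lambda\binom p2}x^{p\lambda}c^p v)$, wait—more carefully, $\Phi_\tau^p v = c' x^{p\lambda} v$ for a suitable constant $c'$, and $\sigma(x^{p\lambda})=x^{p^2\lambda}$, so $\Phi_\sigma v$ spans a rank-1 subquotient of slope $p\lambda$. Iterating, $M$ acquires Jordan--Hölder constituents of slopes $\lambda, p\lambda, p^2\lambda,\dots$; since there are only finitely many constituents and $p\ge 2$, we must have $\lambda=0$. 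Hence \emph{all} $q$-slopes of $M$ vanish, and by the Corollary to Theorem \ref{thm:Structure}, $M$ descends to $\mathbb C$ as a $q$-difference module: there is a $k$-basis in which $[\Phi_\tau]=A_\tau^{-1}$ is a constant matrix, chosen (as in Theorem \ref{thm:Structure}) so that distinct eigenvalues of $A_\tau$ have ratio not in $q^{\mathbb Z}$.

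Next I would descend $\Phi_\sigma$ in that same basis. Write $[\Phi_\sigma]=A_\sigma(x)^{-1}\sigma$ with $A_\sigma(x)=\sum_n B_n x^{n}$, $B_n\in M_r(\mathbb C)$ (after a further change of variable making the slopes integral; here one has to be slightly careful with fractional exponents, but the argument is formally identical). The consistency relation $\Phi_\tau\Phi_\sigma=\Phi_\sigma\Phi_\tau^p$, written out in matrices, becomes a functional equation of the form $A_\tau\, A_\sigma(x) = A_\sigma(x^p)\, A_\tau^{p}$ (up to fixing the precise order of factors and the power of $A_\tau$, which I would nail down from $\sigma(A_\tau)=A_\tau$ together with $\sigma^{-1}\tau\sigma=\tau^p$). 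Comparing coefficients of $x^n$ gives $A_\tau B_n (A_\tau^{p})^{-1} = $ (coefficient of $x^n$ in $A_\sigma(x^p)$), which is $B_{n/p}$ when $p\mid n$ and $0$ otherwise. Iterating the relation $A_\tau B_n A_\tau^{-p}=B_{n/p}$ forces $B_n=0$ unless $n$ is divisible by arbitrarily high powers of $p$, hence $B_n=0$ for $n\ne 0$ and $A_\sigma$ is constant. Then $M_0=\bigoplus \mathbb C e_i$ is the desired $\mathbb C$-structure, and $\Phi_\tau|_{M_0}=A_\tau^{-1}$ acts potentially unipotently because $A_\tau$ is a \emph{constant} matrix whose semisimple part is killed after passing to a finite-index subgroup—more precisely, writing $A_\tau^{-1}=DU$ with $D$ semisimple, $U$ unipotent, commuting: actually potential unipotence here should mean that some power $\Phi_\tau^N$ is unipotent, which holds iff the eigenvalues of $A_\tau$ are roots of unity. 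Hmm—that need not be automatic, so I would instead interpret ``potentially unipotent'' as: after a finite extension of $\widetilde{k}$ (or a finite-index subgroup, or twisting by a rank-1 module) $\Phi_\tau$ is unipotent; the eigenvalue obstruction is exactly the rank-1 torsor, which is why uniqueness is only ``up to resonants''—but wait, the theorem claims uniqueness.

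Let me reconsider the uniqueness and the meaning of potential unipotence. The subtlety is that in the pure $q$-difference case the $\mathbb C$-structure is \emph{not} unique (resonants, Remark \ref{rem:Resonants}), yet here the theorem asserts it \emph{is}. The resolution must be that the extra relation with $\Phi_\sigma$ rigidifies things: the twist $N_i\rightsquigarrow x^{\mu}N_i$, $\phi_i\rightsquigarrow q^\mu\phi_i$ that produces a resonant is incompatible with the braid relation unless $\mu=0$, because the $\sigma$-equation $A_\tau B_n A_\tau^{-p}=B_{n/p}$ already pins the basis. So uniqueness should follow from the same coefficient-comparison argument: if $C\in GL_r(\widetilde k)$ gauge-transforms one constant pair $(A_\sigma^0,A_\tau^0)$ to another constant pair, then the $\tau$-equation $\tau(C)^{-1}A_\tau^0 C=A_\tau'^0$ with $A_\tau^0$ constant gives, exactly as in the proof of Theorem \ref{lem:2M formal structure}, that $C$'s Laurent expansion is supported in degrees annihilated by $\tau-1$ acting through $q^{\lambda}$ — i.e.\ $C$ is constant unless there is resonance — and then the $\sigma$-equation eliminates the residual resonance. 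I would spell this out carefully. As for potential unipotence: I now believe the intended reading is that $\Phi_\tau$ acting on $M_0$ has, after extending scalars to make it triangular, diagonal part a scalar-times-unipotent in a suitable sense — concretely, the relation $\Phi_\tau\Phi_\sigma=\Phi_\sigma\Phi_\tau^p$ restricted to $M_0$ reads $A_\tau B = B A_\tau^p$ for the two \emph{constant} matrices, so the conjugacy classes of $A_\tau$ and $A_\tau^p$ coincide, forcing every eigenvalue $c$ of $A_\tau$ to satisfy $\{c\}=\{c^p\}$ as multisets over the spectrum, whence each eigenvalue is a root of unity (the orbit $c,c^p,c^{p^2},\dots$ is finite and multiplicative), and therefore a power of $\Phi_\tau$ is unipotent. \textbf{The main obstacle} I anticipate is precisely this last point: getting the matrix form of the $\sigma$–$\tau$ braid relation \emph{exactly right} (including which side $A_\tau$ multiplies on and whether it is $A_\tau^p$ or $\sigma(A_\tau^{-1})A_\tau\cdots$), since the non-commutativity of $\Gamma$ makes the consistency condition genuinely different from \eqref{eq:consistency}, and then extracting from it both the vanishing of the off-diagonal Laurent coefficients and the root-of-unity conclusion for the eigenvalues of $\Phi_\tau$. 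Everything else is a routine adaptation of Theorems \ref{thm:2Q formal structure} and \ref{lem:2M formal structure}.
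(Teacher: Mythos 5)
Your opening move (using $\Phi_{\tau}\Phi_{\sigma}=\Phi_{\sigma}\Phi_{\tau}^{p}$ to show every $q$-slope $\lambda$ forces $p^{2}\lambda$ to be a slope, hence $\lambda=0$) is exactly the paper's first step and is fine. The gap is in how you descend $\Phi_{\sigma}$. The matrix form of the consistency relation you guess is wrong in a way that changes the nature of the argument, not just the bookkeeping. In case 1M1Q it is $\sigma$ that is the Mahler operator and $\tau$ the $q$-difference operator, and the relation $\tau\sigma=\sigma\tau^{p}$ together with $A_{\gamma\delta}=\gamma(A_{\delta})A_{\gamma}$ gives, once $A_{\tau}$ is constant, $A_{\sigma}(qx)\,A_{\tau}=A_{\tau}^{p}\,A_{\sigma}(x)$: the Mahler substitution lands on $A_{\tau^{p}}$ (which is the constant $A_{\tau}^{p}$), not on $A_{\sigma}$. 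Comparing coefficients yields $q^{n}B_{n}=A_{\tau}^{p}B_{n}A_{\tau}^{-1}$, a $q$-difference-type condition, not your Mahler-type recursion $A_{\tau}B_{n}A_{\tau}^{-p}=B_{n/p}$. So the "divisible by arbitrarily high powers of $p$" iteration is unavailable; the correct relation only kills $B_{n}$ for $n$ with $q^{n}$ outside the spectrum of $X\mapsto A_{\tau}^{p}XA_{\tau}^{-1}$, and to force $n=0$ you must first arrange that no ratio $c^{p}/c'$ of eigenvalues of $A_{\tau}$ lies in $q^{\mathbb{Z}\setminus\{0\}}$ --- i.e.\ the root-of-unity normalization has to come \emph{before} this step, whereas you derive it afterwards from "$A_{\tau}$ conjugate to $A_{\tau}^{p}$", which presupposes the descent is already done. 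Your argument is circular at exactly this joint. Moreover, before normalization the eigenvalue $c$ of a rank-1 constituent is only defined modulo $q^{\mathbb{Q}}$ (resonance), so finiteness of the orbit gives only $c^{p^{m}}=c^{p^{n}}q^{\alpha}$, whence $c=\zeta q^{\mu}$, not directly a root of unity.

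The paper breaks the circle by staying at the module level: after the slopes vanish, the computation $\Phi_{\tau}v=cv\Rightarrow\Phi_{\tau}(\Phi_{\sigma}v)=c^{p}\Phi_{\sigma}v$ shows $c\mapsto c^{p}$ permutes the finitely many Jordan--H\"older classes $I_{0,c}$ (with $c$ taken mod $q^{\mathbb{Q}}$), giving $c=\zeta q^{\mu}$; a resonant twist $M_{0}(c)\rightsquigarrow x^{-\mu}M_{0}(c)$ replaces each $c$ by the root of unity $\zeta$. This \emph{defines} $M_{0}$ intrinsically as $\{v\in M\mid\exists n\,(\Phi_{\tau}^{m}-1)^{n}v=0\}$, which is what gives uniqueness (any $\mathbb{C}$-structure with potentially unipotent $\Phi_{\tau}$ satisfies the same characterization --- the resonance ambiguity you worried about is excluded by the unipotence condition, not primarily by the $\sigma$-equation), and $\Phi_{\sigma}$ preserves $M_{0}$ directly by the eigenvector computation plus d\'evissage, with no power-series expansion of $A_{\sigma}$ needed. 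You do eventually state the correct mechanism for the roots of unity, but the order of operations and the functional equation in your middle step need to be repaired for the proof to close.
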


\begin{proof}
Suppose $\lambda\in\mathbb{Q}$ is a slope of $M$, considered as
a $q$-difference module. Then there exists a $c\in\mathbb{C}^{\times}$,
uniquely determined up to multiplication by $q^{\alpha},$ $\alpha\in\mathbb{Q},$
and a $0\ne v\in M$, such that $\Phi_{\tau}v=cx^{\lambda}v.$ Since
$\Phi_{\tau}^{p}v=c^{p}q^{\binom{p}{2}\lambda}x^{p\lambda}v,$ the
equation $\Phi_{\tau}\circ\Phi_{\sigma}=\Phi_{\sigma}\circ\Phi_{\tau}^{p}$
yields
\[
\Phi_{\tau}(\Phi_{\sigma}v)=c^{p}q^{\binom{p}{2}\lambda}x^{p^{2}\lambda}\cdot\Phi_{\sigma}v.
\]
It follows that $p^{2}\lambda$ is also a slope of $M$ as a $q$-difference
module. We can repeat this argument, and since the number of slopes
is finite, $\lambda=0.$ This means that $M$ descends to $\mathbb{C},$
i.e. $M=\widetilde{k}\otimes_{\mathbb{C}}M_{0},$ as a $q$-difference
module. Furthermore, if $c$ is an eigenvalue of $\Phi_{\tau}$ on
$M,$ the above computation shows that so is $c^{p}.$ Since there
are only finitely many eigenvalues modulo $q^{\mathbb{Q}}$ it follows
that for some $m>n\ge1$ and $\alpha\in\mathbb{Q}$ we must have
\[
c^{p^{m}}=c^{p^{n}}q^{\alpha}.
\]
This means that $c=\zeta q^{\mu}$ for some rational number $\mu$,
and a root of unity $\zeta.$ Let $M_{0}(c)$ be the direct summand
of $M_{0}$ with generalized $\Phi_{\tau}$-eigenvalue $c.$ Replacing
it by its ``resonant'' $x^{-\mu}M_{0}(c),$ we may assume that $c=\zeta.$
Going over all the eigenvalues of $\Phi_{\tau}$ on $M_{0}$ in this
way, we may assume that they have all been replaced by roots of unity,
so some power $\Phi_{\tau}^{m}$ acts unipotently on $M_{0}.$ This
pins down $M_{0},$ namely
\[
M_{0}=\left\{ v\in M|\,\exists n\,(\Phi_{\tau}^{m}-1)^{n}v=0\right\} .
\]
Substituting $\zeta$ for $c$ in the computation above we see that
$\Phi_{\sigma}$ preserves $M_{0}[\Phi_{\tau}^{m}-1]$, hence by dévissage
preserves also $M_{0}.$ This concludes the proof of the theorem.
\end{proof}

\subsection{Rational $p$-Mahler $q$-difference modules}

The analogue of Theorem \ref{thm:Main Theorem} in case 1M1Q is the
following.
\begin{thm}
Let $M$ be a $p$-Mahler $q$-difference module over $\widetilde{K}$.
Then $M$ has a unique $\mathbb{C}$-structure $M_{0}$ preserved
by both $\Phi_{\sigma}$ and $\Phi_{\tau}$, such that $\Phi_{\tau}$
acts potentially unipotently on $M_{0}$.
\end{thm}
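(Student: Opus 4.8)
The plan is to run the same six-step ``globalization'' argument used in cases 2M and 2Q, importing Theorem \ref{thm:formal 1M1Q} as the formal input in place of Theorems \ref{thm:2Q formal structure} and \ref{lem:2M formal structure}. Fix a basis of $M$ over $\widetilde{K}$ and let $A=A_{\sigma}$, $B=A_{\tau}$ be the associated matrices; after replacing $x$ by $x^{1/s}$ we may assume $A,B\in GL_{r}(K)$ with $K=\mathbb{C}(x)$. As in Section \ref{sec:Rational}, the relevant points of $\mathbb{P}^{1}$ are $0$, $1$, $\infty$: near $0$ and $\infty$ the operators act as Mahler-type maps $x\mapsto x^{p}$ (while $\tau$ is a $q$-dilation), so $M_{\widetilde{k}_{0}}$ and $M_{\widetilde{k}_{\infty}}$ are formal $p$-Mahler $q$-difference modules to which Theorem \ref{thm:formal 1M1Q} applies, giving constant matrices $A_{i},B_{i}$ and gauge matrices $C_{i}\in GL_{r}(\widetilde{k}_{i})$ with $\sigma(C_{i})^{-1}AC_{i}=A_{i}$, $\tau(C_{i})^{-1}BC_{i}=B_{i}$; near $x=1$ we pass to $z=\log x$, where $\sigma(z)=pz$ and $\tau(z)=qz$, so $M_{k_{1}}$ becomes a module for a pair of commuting dilations $z\mapsto pz$, $z\mapsto qz$ — but now $p$ and $q$ need \emph{not} be multiplicatively independent, so Theorem \ref{thm:2Q formal structure} does not literally apply at the point $1$. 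This is the one place the argument genuinely differs from case 2M, and I will return to it below. Steps II and III (normalizing $C_{i}\equiv I\bmod t_{i}$ by weak approximation, and proving holomorphy in a neighborhood via the Taylor-coefficient recursion $A_{0}M_{n}+\sum N_{i}M_{n-i}=M_{n/p}A_{0}$) go through verbatim, and likewise Step IV, where the functional equation $C_{0}(x)=A(x)^{-1}C_{0}(x^{p})A_{0}$ propagates meromorphy from $D(0,r)$ to $D(0,r^{1/p})$, hence to the whole open unit disk, and symmetrically at $\infty$.

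The crossing-the-circle argument (Step V of case 2M) then applies with only cosmetic changes. One forms $\widetilde{C}_{i}(z)=C_{i}(e^{z})$, uses the $\sigma$-functional equation $A_{1}=\widetilde{C}_{1}(pz)^{-1}A(e^{z})\widetilde{C}_{1}(z)$ to continue $\widetilde{C}_{1}$ to all of $\mathbb{C}$, sets $\widetilde{C}_{01}=\widetilde{C}_{0}^{-1}\widetilde{C}_{1}$, which satisfies $\widetilde{C}_{01}(pz)=A_{0}\widetilde{C}_{01}(z)A_{1}^{-1}$ and $\widetilde{C}_{01}(qz)=B_{0}\widetilde{C}_{01}(z)B_{1}^{-1}$, locates a sector free of poles, passes to $D(w)=\widetilde{C}_{01}(e^{w})$ with $D(w+\log p)=A_{0}D(w)A_{1}^{-1}$, $D(w+\log q)=B_{0}D(w)B_{1}^{-1}$, and removes the first periodicity by the substitution $E(w)=e^{-L_{0}w/\log p}D(w)e^{L_{1}w/\log p}$, where $A_{i}=e^{L_{i}}$ with $L_{i}$ chosen to commute with $B_{i}$ — here I use that, by Theorem \ref{thm:formal 1M1Q}, $A_{1}=A_{\tau}^{0}$ may be taken to act potentially unipotently, and $A_{0},B_{0}$ commute (resp. $A_{1},B_{1}$), which is what makes the commuting logarithm available. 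The Fourier expansion $E(w)=\sum_{n}E_{n}e^{2\pi inw/\log p}$ with $E_{n}e^{2\pi in\log q/\log p}=F_{0}E_{n}F_{1}^{-1}$ then forces $E_{n}=0$ for almost all $n$ exactly as before — \emph{this} is where multiplicative independence of $p$ and $q$ is genuinely needed, and in case 1M1Q it must be supplied not as a hypothesis but as a \emph{consequence}: the point is that after the resonant normalization the eigenvalues of $A_{1}$ are roots of unity, so $\log q/\log p$ appears only through the requirement that the numbers $e^{2\pi in\log q/\log p}$ be distinct, which holds unless $q$ is a root of unity — and $q$ is assumed not to be. Thus $D(w)$ is entire, $\widetilde{C}_{0}$ continues $2\pi i$-periodically to all of $\mathbb{C}$, so $C_{0}(x)$ continues meromorphically past $\infty$; symmetrically for $C_{\infty}$. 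Finally $C_{0\infty}=C_{0}^{-1}C_{\infty}$ satisfies $C_{0\infty}(x^{p})=A_{0}C_{0\infty}(x)A_{\infty}^{-1}$, so its Laurent coefficients $M_{n}$ vanish unless $p^{k}\mid n$ for all $k$, whence $C_{0\infty}$ is constant and $C_{0}(x)\in GL_{r}(K)$. This exhibits $M_{0}=\bigoplus\mathbb{C}e_{i}$ (in the new basis) as a $\Gamma$-invariant $\mathbb{C}$-structure, and uniqueness together with potential unipotence of $\Phi_{\tau}$ descends from the formal statement at $0$ (or $\infty$) via Theorem \ref{thm:formal 1M1Q}.

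The main obstacle is the behavior at the point $x=1$: there the group acts, in the coordinate $z$, through two commuting linear dilations by $p$ and $q$, and since no independence of $p,q$ is assumed the formal structure theorem for $2Q$-modules is unavailable, so I cannot simply quote it to produce $C_{1}$, $A_{1}$, $B_{1}$. The resolution I would adopt is to \emph{not} ask $M_{k_{1}}$ to descend to $\mathbb{C}$ at all as a preliminary step: instead one only needs \emph{some} gauge transformation $C_{1}\in GL_{r}(k_{1})$ making $A$ constant (i.e. making $\Phi_{\sigma}$ constant at $1$) — this exists because at $x=1$ the Mahler operator $\sigma:x\mapsto x^{p}$ is a local analytic automorphism fixing $1$ with nonzero derivative $p$, so in the parameter $z$ it is the linear map $z\mapsto pz$ and the rank-$r$ $\langle\sigma\rangle$-difference module over $k_{1}$ is handled exactly as a formal $q$-difference module in the variable $z$ with ``$q$'' replaced by $p\neq0$ not a root of unity, hence Theorem \ref{thm:Structure} applies (all slopes are $0$ since $\sigma$ has a fixed point) and yields $C_{1}$ with $\sigma(C_{1})^{-1}AC_{1}=A_{1}\in GL_{r}(\mathbb{C})$. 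One then defines $B_{1}=\tau(C_{1})^{-1}BC_{1}$, which need not a priori be constant, but the relation $\sigma^{-1}\tau\sigma=\tau^{p}$ together with $A_{1}$ constant forces, by the same Laurent-coefficient argument used elsewhere, that $B_{1}$ is in fact constant as well — this is the analogue of the observation in the proof of Theorem \ref{lem:2M formal structure} that once one operator's matrix is constant, consistency drags the other down to constants too. Everything downstream of producing the triple $(C_{1},A_{1},B_{1})$ then proceeds as sketched, and I expect that verifying this ``$B_{1}$ is automatically constant'' step, and checking that the potential-unipotence normalization of $A_{1}$ is compatible with the choices made at $1$, will be the only nontrivial bookkeeping.
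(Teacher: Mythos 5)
There is a genuine gap, and it is located exactly where you placed your ``main obstacle'': the point $x=1$. In case 1M1Q the operator $\tau$ is the dilation $\tau f(x)=f(qx)$, which does \emph{not} fix the point $x=1$ (it sends $1$ to $q^{-1}$ on points, i.e.\ maps $\mathbb{C}((x-1))$ into $\mathbb{C}((qx-1))$). So $M_{k_{1}}$ is not a $\Gamma$-difference module, nor even a $\langle\tau\rangle$-difference module; in the coordinate $z=\log x$ one has $\tau(z)=z+\log q$, a translation that does not preserve the local ring at $z=0$, not the dilation $z\mapsto qz$ you wrote. Consequently $B_{1}=\tau(C_{1})^{-1}BC_{1}$ is not an element of $GL_{r}(k_{1})$, the second functional equation $\widetilde{C}_{01}(qz)=B_{0}\widetilde{C}_{01}(z)B_{1}^{-1}$ does not exist, and the entire crossing-the-circle mechanism of case 2M (which needs \emph{both} equations to kill the monodromy) collapses. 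Your proposed repair --- producing $C_{1}$ from the $\sigma$-action alone and then arguing that consistency forces $B_{1}$ constant --- cannot be carried out, because there is no local $\tau$-action at $1$ to be consistent with.

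The paper's proof avoids the point $1$ altogether and is modeled on case 2Q, not 2M. Since $q$ is not a root of unity, the $\mathbb{C}_{\ell}$-embedding trick of Step VI of \S\ref{subsec:(p,q)-difference modules} lets one assume $|q|>1$. Then the $\tau$-functional equation $C_{0}(qx)=B(x)C_{0}(x)B_{0}^{-1}$ continues $C_{0}$ meromorphically from a small disk to all of $0\le|x|<\infty$ in one stroke --- there is no natural boundary at $|x|=1$ because the relevant dynamics is the dilation $x\mapsto qx$, not the Mahler map. (Your Step IV, which uses only the Mahler equation, is what traps you inside the unit disk and creates the spurious need to cross it.) The final step then uses the \emph{Mahler} equation $C_{0\infty}(x^{p})A_{0}=A_{\infty}C_{0\infty}(x)$ on Laurent expansions in annuli, as in Step VI of case 2M, to conclude that $C_{0\infty}$ is constant and hence $C_{0}\in GL_{r}(K)$. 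Your Steps I--III at the points $0$ and $\infty$, and the appeal to Theorem \ref{thm:formal 1M1Q} for existence, uniqueness, and potential unipotence, are fine; it is the global continuation strategy that must be replaced.
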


\begin{proof}
As in cases 2Q and 2M, choose a basis of $M$ over $\widetilde{K}$
and let $A=A_{\sigma}$ and $B=A_{\tau}$ represent $\Phi_{\sigma}$
and $\Phi_{\tau}$ in this basis. Our goal is to show that the pair
$(A,B)$ is gauge-equivalent to a pair of constant matrices $(A_{0},B_{0}),$
and moreover that all the eigenvalues of $B_{0}$ are roots of unity.

Without loss of generality we may assume that $|q|>1$. The reduction
to this case is done precisely as in case 2Q; see step VI in \S\ref{subsec:(p,q)-difference modules},
\emph{elimination of the assumption (Hyp)}.

We consider the points $i=0,\infty$ and proceed as in case 2Q. Invoking
theorem \ref{thm:formal 1M1Q} and repeating the arguments in steps
I-IV there we get:

\textbf{Steps I-IV: }After a change of variables, writing $x$ for
$x^{1/s}$ for a suitable $s$, there exists an invertible matrix
$C_{0}(x)$, meromorphic in $0\le|x|<\infty$ and holomorphic at 0,
and constant matrices $A_{0},B_{0}$, such that the following equations
hold
\[
\begin{cases}
\begin{array}{c}
C_{0}(x^{p})^{-1}A(x)C_{0}(x)=A_{0}\\
C_{0}(qx)^{-1}B(x)C_{0}(x)=B_{0}
\end{array} & .\end{cases}
\]
Furthermore, all the eigenvalues of $B_{0}$ are roots of unity.

Likewise, there exists an invertible matrix $C_{\infty}(x),$ meromorphic
in $0<|x|\le\infty$ and holomorphic at $\infty,$ and constant matrices
$A_{\infty},B_{\infty},$ such that
\[
\begin{cases}
\begin{array}{c}
C_{\infty}(x^{p})^{-1}A(x)C_{\infty}(x)=A_{\infty}\\
C_{\infty}(qx)^{-1}B(x)C_{\infty}(x)=B_{\infty}
\end{array} & .\end{cases}
\]

\textbf{Step V: }The matrix $C_{0}(x)\in GL_{r}(K).$

Consider
\[
C_{0\infty}(x)=C_{\infty}(x)^{-1}C_{0}(x),
\]
which is meromorphic in $0<|x|<\infty.$ It satisfies there the functional
equation
\[
C_{0\infty}(x^{p})A_{0}=A_{\infty}C_{0\infty}(x).
\]
Arguing as in Step VI in case 2M, on the power-series expansions of
$C_{0\infty}(x)$ in annuli of analyticity, we deduce that $C_{0\infty}$
is constant. It follows that $C_{0}(x)$ is meromorphic also at $\infty,$
hence is rational.

This concludes the proof of the theorem.
\end{proof}
As in Proposition \ref{prop:Modules_and_equations} we can withdraw
from the last theorem the following consequence.
\begin{thm}
Let $f\in\mathbb{C}((x))$ and assume that $f\in M\subset\widetilde{k}=\bigcup_{s\in\mathbb{N}}\mathbb{C}((x^{1/s})),$
where $M$ is a finite dimensional $\widetilde{K}$-vector space closed
under $\sigma$ and $\tau$. Then $f\in\mathbb{C}(x).$
\end{thm}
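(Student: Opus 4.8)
The plan is to deduce this from the preceding theorem, mimicking the argument of Proposition \ref{prop:Modules_and_equations}. First I would let $M \subset \widetilde{k}$ be the $\widetilde{K}\langle\Gamma\rangle$-submodule generated by $f$, with $\Phi_\sigma = \sigma$ and $\Phi_\tau = \tau$ acting by restriction; by hypothesis this is a finite-dimensional $p$-Mahler $q$-difference module over $\widetilde{K}$. The previous theorem produces a unique $\mathbb{C}$-structure $M_0 \subset M$ with $M = \widetilde{K} \otimes_{\mathbb{C}} M_0$, on which $\Phi_\tau$ acts potentially unipotently. Pick a $\mathbb{C}$-basis $e_1,\dots,e_r$ of $M_0$; then $f = \sum_i g_i e_i$ with $g_i \in \widetilde{K}$, so it suffices to show each $e_i \in \mathbb{C}(x)$, for then $f \in \mathbb{C}(x)$ provided we also knew $g_i \in \mathbb{C}(x)$ — but in fact $r=1$ will drop out at the end, exactly as in Proposition \ref{prop:Modules_and_equations}, because $f \in \mathbb{C}((x))$ while the $e_i$ turn out to be multiples of a single element of $\mathbb{C}(x)$.

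Next I would extract from $M_0$ the matrix $B = {}^t(B_\tau^0)^{-1} \in GL_r(\mathbb{C})$ of $\Phi_\tau$ in the basis $\underline{e} = {}^t(e_1,\dots,e_r)$, so that $\underline{e}(\tau x) = \underline{e}(qx) = B\underline{e}(x)$, and similarly a constant matrix for $\Phi_\sigma$. After replacing $x$ by $x^{1/s}$ for a suitable $s$ we may take all $e_i \in \mathbb{C}((x))$. Writing $\underline{e} = \sum_{n \ge n_0} v_n x^n$ with $v_n \in \mathbb{C}^r$, the relation $\underline{e}(qx) = B\underline{e}(x)$ gives $\sum_n q^n v_n x^n = \sum_n B v_n x^n$, i.e. $B v_n = q^n v_n$ for every $n$. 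Since $B$ has only finitely many eigenvalues and $q$ is not a root of unity, the powers $q^n$ are pairwise distinct, so $v_n = 0$ for all but finitely many $n$; hence each $e_i$ is a Laurent polynomial, in particular $e_i \in \mathbb{C}(x)$ (after undoing the substitution, $e_i \in \mathbb{C}(x^{1/s}) \cap \mathbb{C}((x)) = \mathbb{C}(x)$ since $\mathbb{C}((x)) \cap \widetilde{K} = K$). Here I do not even need the $\sigma$-relation, nor the potential unipotence — the single $q$-difference relation with $q$ not a root of unity already forces $e_i$ to be rational, which is the main simplification over case 2Q where one must argue more carefully about eigenvalues.

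Finally, since $e_1,\dots,e_r$ are $\widetilde{K}$-linearly independent elements of $M$, and $f \in M = \sum_i \widetilde{K} e_i$ lies in $\widetilde{k}$, once we know each $e_i \in K = \mathbb{C}(x)$ we get $f \in \sum_i \widetilde{K} e_i$; writing $f = \sum g_i e_i$ with $g_i \in \widetilde{K}$ and using that $f \in \mathbb{C}((x))$ and the $e_i \in \mathbb{C}(x)$ are $\widetilde{K}$-independent, the $g_i$ are uniquely determined and must lie in $\mathbb{C}((x)) \cap \widetilde{K} = \mathbb{C}(x)$; hence $f \in \mathbb{C}(x)$. (Alternatively, as in Proposition \ref{prop:Modules_and_equations}, one observes directly that $M \subset \widetilde{k}$ is spanned over $K$ by rational functions, hence $M \cap \mathbb{C}((x)) \subset \mathbb{C}(x)$, and $f$ lies there.) The only real content is the appeal to the previous theorem; everything after that is the routine Laurent-coefficient argument, and I expect no genuine obstacle — the mild bookkeeping point to watch is the passage through $x^{1/s}$ and the identity $\mathbb{C}((x)) \cap \widetilde{K} = \mathbb{C}(x)$, already used in the proof of Proposition \ref{prop:Modules_and_equations}.
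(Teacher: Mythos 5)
Your proof is correct and is exactly the argument the paper intends: the paper's own "proof" is the one-line deferral "as in Proposition \ref{prop:Modules_and_equations}", and you have carried out precisely that reduction, applying the 1M1Q structure theorem to get a constant matrix for $\Phi_{\tau}$ and then running the Laurent-coefficient argument with $q$ (not a root of unity) in the role that $p$ played in case 2Q. The only nit is that after the substitution the individual $e_{i}$ land in $\mathbb{C}(x^{1/s})\subset\widetilde{K}$ but need not lie in $\mathbb{C}((x))$, so one should conclude via your parenthetical route, $f\in M\cap\mathbb{C}((x))\subset\widetilde{K}\cap\mathbb{C}((x))=\mathbb{C}(x)$, rather than claiming $e_{i}\in\mathbb{C}((x))\cap\widetilde{K}$.
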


Note that the assumption on the finite dimensionality of $M$ replaces
the (insufficient) assumption that $f$ satisfies a $p$-Mahler equation
and a $q$-difference equation simultaneously. As remarked before,
it is possible to encode this assumption in a finite number of equations,
but their number will depend, in general, on the power series $f$,
and they will be of mixed type, iterations of both $\sigma$ and $\tau$
figuring in the same equation.

\section{Finite characteristic}

In this section we briefly explain how to modify the proof of Theorem
\ref{thm:Bezivin} and Theorem \ref{thm:Main Theorem} (in the case
2Q), when $\mathbb{C}$ is replaced by an arbitrary algebraically
closed field. We thus prove the following.
\begin{thm}
Theorems \ref{thm:Bezivin} and \ref{thm:Main Theorem} (case 2Q)
remain valid as stated, when $\mathbb{C}$ is replaced by an arbitrary
algebraically closed field $C$.
\end{thm}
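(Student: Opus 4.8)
The plan is to reduce to a finitely generated ground field and then transplant the proof of \S\ref{subsec:(p,q)-difference modules} to a complete valued field. Let $M$ be a $(p,q)$-difference module over $C(x)$, given by matrices $A=A_{\sigma}$ and $B=A_{\tau}$ in $GL_r(C(x))$ satisfying the consistency equation $(\ref{eq:consistency})$. Only finitely many elements of $C$ occur in $p,q$ and in the entries of $A$ and $B$, so there is a subfield $C_{0}\subseteq C$ that is finitely generated over the prime field, with $p,q\in C_{0}^{\times}$ and $A,B\in GL_r(C_{0}(x))$; multiplicative independence of $p$ and $q$ — in particular the fact that neither is a root of unity — is inherited by $C_{0}$. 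The formal theory of \S\ref{sec:Formal} applied to the underlying $q$-difference module, namely Theorems \ref{thm:Structure} and \ref{thm:2Q formal structure}, is characteristic-free: its recursions are solved using only that $q$ is not a root of unity (so $q^{n}\ne1$ for $n\ge1$) and the finiteness of the set of eigenvalues of a finite-dimensional linear operator, and the binomial coefficients $\binom{i}{2}$ that occur appear only as exponents of $q$ — in positive characteristic one takes $q^{1/s}$ to be the unique iterated Frobenius root — so no denominator ever meets the characteristic. The inseparability of the extensions $C_{0}((x^{1/s}))$ in positive characteristic is harmless, those arguments being purely formal, and by Theorem \ref{thm:2Q formal structure} the second operator forces every $q$-slope of $M$ to vanish, so the output is in fact defined over $C_{0}((x))$. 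Hence Steps I and II of \S\ref{subsec:(p,q)-difference modules} go through over $C_{0}$: for $i=0,\infty$ there are formal power-series matrices $C_{i}\in GL_r(\overline{C_{0}}[[t_{i}]])$ with $C_{i}\equiv I\bmod t_{i}$ and $\sigma(C_{i})^{-1}AC_{i}=A_{i}$, $\tau(C_{i})^{-1}BC_{i}=B_{i}$ constant in $GL_r(\overline{C_{0}})$ (obtaining the constant matrices may require a finite algebraic extension of $C_{0}$; the weak approximation of Step II is available because $\mathbb{P}^{1}_{C_{0}}$ has infinitely many closed points). So far these $C_{i}$ are only formal.

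Next I would pick a complete valued field into which to transplant the picture. Since $p$ (say) is a non-root-of-unity element of the finitely generated field $C_{1}$ generated by $p$ and $q$ over the prime field, there is a rank-one place $v$ of $C_{1}$ — archimedean or not — with $|p|_{v}\ne1$. In characteristic zero this is exactly the dichotomy of Step VI of \S\ref{subsec:(p,q)-difference modules}: a transcendental $p$ has a pole at some place, while an algebraic $p$ that is a unit at all finite places and has all archimedean absolute values equal to $1$ is a root of unity by Kronecker's theorem; thus $C_{1}$ embeds into $\mathbb{C}$, or into some $\mathbb{C}_{\ell}$, so that $|p|\ne1$ or $|q|\ne1$. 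In positive characteristic $C_{1}$ has positive transcendence degree over the finite prime field (otherwise it is finite and contains no multiplicatively independent pair), so it is the function field of a normal projective variety $X$ over a finite field; the everywhere-invertible global functions on $X$ are just the elements of the finite constant field, hence roots of unity, so the non-root-of-unity $p$ must satisfy $v(p)\ne0$ at some divisorial place $v$. Completing at $v$, passing to the algebraic closure and completing once more, I obtain a complete algebraically closed valued field $\Omega$ — equicharacteristic and non-archimedean when $\operatorname{char}C>0$, and $\mathbb{C}$ or some $\mathbb{C}_{\ell}$ when $\operatorname{char}C=0$ — together with an embedding $\iota\colon\overline{C_{0}}\hookrightarrow\Omega$ with $|\iota(p)|\ne1$. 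After the usual harmless substitutions ($\sigma\mapsto\sigma^{\pm1}$, $\tau\mapsto\tau^{\pm1}$, possibly interchanging $\sigma$ and $\tau$) we may assume $|\iota(p)|>1$ and $|\iota(q)|\ge1$, and $\iota(p),\iota(q)$ remain multiplicatively independent.

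Applying $\iota$ to all coefficients turns $C_{0}$ and $C_{\infty}$ into formal power series over $\Omega$, and then Steps III, IV and V of \S\ref{subsec:(p,q)-difference modules} apply verbatim over $\Omega$: Step III was the only place where Calculus entered, and over a non-archimedean $\Omega$ the growth estimates become the trivial ultrametric ones, while over $\mathbb{C}$ they are the original ones; Steps IV and V use only $|\iota(p)|>1$ and the multiplicative independence of $\iota(p),\iota(q)$. We conclude that $\iota(C_{0}(x))\in GL_r(\Omega(x))$. It remains to descend this rationality to $C_{0}$. The matrix power series $C_{0}(x)=\sum_{n}M_{n}x^{n}$ has $M_{n}\in M_r(\overline{C_{0}})$ by construction, and rationality of a matrix power series is detected by the eventual vanishing of a fixed family of Hankel-type determinants in its coefficients, a condition independent of the ground field; since these vanish for the $\iota(M_{n})$ and $\iota$ is injective, they vanish for the $M_{n}$, whence $C_{0}(x)\in GL_r(\overline{C_{0}}(x))\subseteq GL_r(C(x))$. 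By $(\ref{eq:gauge})$ the pair $(A,B)$ is therefore gauge-equivalent over $C(x)$ to the constant pair $(A_{0},B_{0})\in GL_r(C)$, so $M$ admits an underlying $C$-structure. This is Theorem \ref{thm:Main Theorem} in case 2Q over $C$, and Theorem \ref{thm:Bezivin} over $C$ then follows from it exactly as in Proposition \ref{prop:Modules_and_equations}, whose argument is insensitive to the characteristic.

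The main obstacle is the choice of $\Omega$: one must know that a non-root-of-unity element of a finitely generated field is genuinely detected by some complete place, and this is where the input depends on the characteristic — Kronecker's theorem, together with the $\ell$-adic device already used over $\mathbb{C}$ in Step VI, in characteristic zero, and the finiteness of the group of everywhere-invertible functions on a projective model over a finite field in positive characteristic. The remaining points — that the formal structure theorems are characteristic-free (in particular that no root $x^{1/s}$ is needed in case 2Q, the slopes being zero) and that the rationality criterion used in the descent is field-independent — are routine bookkeeping.
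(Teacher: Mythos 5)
Your proposal is correct, and the overall strategy coincides with the paper's: embed the constants into a complete algebraically closed valued field in which $|p|>1$, and rerun Steps III--V of \S\ref{subsec:(p,q)-difference modules} in the rigid-analytic sense (with Step III becoming easier by the ultrametric inequality). The difference lies in how that valued field is produced, and in what has to be cleaned up afterwards. The paper disposes of characteristic $0$ by the Lefschetz principle and, in characteristic $\ell>0$, proves a one-line lemma: since a non-root-of-unity $p$ is transcendental over $\mathbb{F}_{\ell}$, one completes $\{p\}$ to a transcendence basis of \emph{all of} $C$, puts on $\mathbb{F}_{\ell}(\text{basis})$ the valuation trivial on the other generators with $|p|>1$, and extends it to a complete algebraically closed field $\widehat{C}\supseteq C$. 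Because the valuation lives on a field containing $C$ from the start, no descent of rationality is needed at the end. You instead first cut down to the finitely generated field $\mathbb{F}_{\ell}(p,q)$, locate a divisorial place with $v(p)\ne0$ via the fact that everywhere-invertible functions on a normal projective model over a finite field are constants, and then must transport the answer back, which you do correctly via the Kronecker--Hankel rationality criterion applied entrywise (the coefficients of $C_{0}(x)$ lying in $\overline{C_{0}}$ by construction). Both routes are valid; the paper's lemma is more economical and sidesteps the finitely-generated reduction and the final descent, while your version makes explicit a descent step that the paper leaves implicit and records, usefully, that the characteristic-$0$ case can also be handled by the same place-hunting argument as Step VI rather than by Lefschetz.
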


\begin{proof}
If $C$ has characteristic $0$ one can apply the Lefschetz principle
and assume it is $\mathbb{C}$. Let therefore $char.(C)=\ell>0.$
The proof of Proposition \ref{prop:Modules_and_equations}, deducing
Theorem \ref{thm:Bezivin} from Theorem \ref{thm:Main Theorem}, did
not use any property of $\mathbb{C},$ besides it being a field. We
therefore only have to explain how to modify the proof of Theorem
\ref{thm:Main Theorem}.

Theorem \ref{thm:2Q formal structure}, giving the structure of a
formal $(p,q)$-difference module, also did not use any property of
the field of constants, and works equally well if $C$ has finite
characteristic. This provides the starting point for the proof, and
steps I-II of \S\ref{subsec:(p,q)-difference modules} hold true
with $C$ replacing $\mathbb{C}.$ We now use the following lemma.
\begin{lem}
Let $C$ be an algebraically closed field of characteristic $\ell$
and $p\in C^{\times}$ not a root of unity. Then there exists an algebraically
closed complete valued field $(\widehat{C},|.|)$ containing $C,$
such that $|p|>1$.
\end{lem}

\begin{proof}
As $p$ is transcendental over $\mathbb{F}_{\ell}$ we can complete
it to a transcendental basis $\{z_{\alpha}\}$ of $C$ over $\mathbb{F}_{\ell}$,
with $z_{0}=p$. Let $F$ be the field generated over $\mathbb{F}_{\ell}$
by the $z_{\alpha}$ for $\alpha\ne0$, and consider $F(z_{0})$ with
a valuation which is trivial on $F$ and satisfies $|z_{0}|>1$. Let
$\widehat{C}$ be an algebraically closed complete extension of $F(z_{0})$
to which $|.|$ extends. Since $C$ is algebraic over $F(z_{0}),$
it embeds in $\widehat{C}$.
\end{proof}
We continue as in \S\ref{subsec:(p,q)-difference modules}, reserving
the terms ``holomorphic'' and ``meromorphic'' to mean ``rigid
holomorphic (resp. meromorphic) over $\widehat{C}$''. Steps III-V,
concluding the proof, are carried out now in the same way as over
$\mathbb{C}$, taking advantage of the fact that $|p|>1.$ Compare
with the use of $\mathbb{C}_{\ell}$ to eliminate assumption (hyp)
in characteristic 0, in loc.cit., Step VI.
\end{proof}
\begin{rem}
The extension of cases 2M and 1M1Q to finite characteristic demands
special attention, for the following reason. The substitution $z=\log(x)$,
which allowed us to delegate the formal study of a rational Mahler
module at the fixed point $x=1$ to the realm of $q$-difference modules,
is no longer valid in finite characteristic. In fact, the formal multiplicative
group is not isomorphic to the formal additive group, and therefore
the results of \S\ref{sec:Formal} have to be recast in a new setup.
Notwithstanding this remark, we believe that the main theorems in
case 2M remain valid in finite characteristic $\ell$, at least if
$(\ell,pq)=1.$
\end{rem}

\end{document}